\newtheorem{theorem}{Theorem}[section]
\newtheorem{definition}[theorem]{Definition}
\newtheorem{lemma}[theorem]{Lemma}
\newtheorem{proposition}[theorem]{Proposition}
\theoremstyle{remark}
\newtheorem{remark}[theorem]{Remark}
\newtheorem{example}[theorem]{Example}
\numberwithin{equation}{section}
\newcommand{\eps}{\varepsilon}
\def\typeout#1{\message{^^J}\message{#1}\message{^^J}}
\newif\ifSRCOK \SRCOKtrue
\def\EJECT{\SRC\eject}
\def\WinEdt#1{\typeout{:#1}}
\gdef\MainFile{\jobname.tex}
\gdef\CurrentInput{\MainFile}
\def\SRC{\ifSRCOK  \ifnum\inputlineno>\LASTLINE    \ifnum\LASTLINE<0      \global\PAGETOP=\inputlineno    \fi    \global\LASTLINE=\inputlineno    \ifnum\INPSP=0      \ifnum\inputlineno>\PAGETOP        
      \fi    \else      
    \fi  \fi\fi}
\def\PUSH#1{\SRC\ifnum\INPSP=0 \global\let\INPSTACKA=\CurrentInput \else\ifnum\INPSP=1 \global\let\INPSTACKB=\CurrentInput \else\ifnum\INPSP=2 \global\let\INPSTACKC=\CurrentInput \else\ifnum\INPSP=3 \global\let\INPSTACKD=\CurrentInput \else\ifnum\INPSP=4 \global\let\INPSTACKE=\CurrentInput \else\ifnum\INPSP=5 \global\let\INPSTACKF=\CurrentInput \else               \global\let\INPSTACKX=\CurrentInput \fi\fi\fi\fi\fi\fi\gdef\CurrentInput{#1}\WinEdt{<+ \CurrentInput}\global\LASTLINE=0\ifSRCOK\fi\global\advance\INPSP by 1}
\def\POP{\ifnum\INPSP>0 \global\advance\INPSP by -1  \fi\ifnum\INPSP=0 \global\let\CurrentInput=\INPSTACKA \else\ifnum\INPSP=1 \global\let\CurrentInput=\INPSTACKB \else\ifnum\INPSP=2 \global\let\CurrentInput=\INPSTACKC \else\ifnum\INPSP=3 \global\let\CurrentInput=\INPSTACKD \else\ifnum\INPSP=4 \global\let\CurrentInput=\INPSTACKE \else\ifnum\INPSP=5 \global\let\CurrentInput=\INPSTACKF \else               \global\let\CurrentInput=\INPSTACKX \fi\fi\fi\fi\fi\fi\WinEdt{<-}\global\LASTLINE=\inputlineno\global\advance\LASTLINE by -1\SRC}
\def\INPUT#1{\relax}
\def\PUSH{#}\input #\POP1{\SRC\PUSH{#1}\input #1\POP\SRC}
\let\originalxxxeverypar\everypar
\newtoks\everypar
\everymath\expandafter{\the\everymath\expandafter\SRC}
\output\expandafter{\expandafter\SRCOKfalse\the\output}
\newif\ifSRCOK \SRCOKtrue
\gdef\MainFile{\jobname.tex}
\gdef\CurrentInput{\MainFile}
\def\EJECT{\SRC\eject}
\def\WinEdt#1{\typeout{:#1}}
\def\SRC{\ifSRCOK  \ifnum\inputlineno>\LASTLINE    \ifnum\LASTLINE<0      \global\PAGETOP=\inputlineno    \fi    \global\LASTLINE=\inputlineno    \ifnum\INPSP=0      \ifnum\inputlineno>\PAGETOP              \fi    \else          \fi  \fi\fi}
\def\PUSH#1{\SRC\ifnum\INPSP=0 \global\let\INPSTACKA=\CurrentInput \else\ifnum\INPSP=1 \global\let\INPSTACKB=\CurrentInput \else\ifnum\INPSP=2 \global\let\INPSTACKC=\CurrentInput \else\ifnum\INPSP=3 \global\let\INPSTACKD=\CurrentInput \else\ifnum\INPSP=4 \global\let\INPSTACKE=\CurrentInput \else\ifnum\INPSP=5 \global\let\INPSTACKF=\CurrentInput \else               \global\let\INPSTACKX=\CurrentInput \fi\fi\fi\fi\fi\fi\gdef\CurrentInput{#1}\WinEdt{<+ \CurrentInput}\global\LASTLINE=0\ifSRCOK\fi\global\advance\INPSP by 1}
\def\POP{\ifnum\INPSP>0 \global\advance\INPSP by -1  \fi\ifnum\INPSP=0 \global\let\CurrentInput=\INPSTACKA \else\ifnum\INPSP=1 \global\let\CurrentInput=\INPSTACKB \else\ifnum\INPSP=2 \global\let\CurrentInput=\INPSTACKC \else\ifnum\INPSP=3 \global\let\CurrentInput=\INPSTACKD \else\ifnum\INPSP=4 \global\let\CurrentInput=\INPSTACKE \else\ifnum\INPSP=5 \global\let\CurrentInput=\INPSTACKF \else               \global\let\CurrentInput=\INPSTACKX \fi\fi\fi\fi\fi\fi\WinEdt{<-}\global\LASTLINE=\inputlineno\global\advance\LASTLINE by -1\SRC}
\def\INPUT#1{\relax}
\let\OldINCLUDE=\include
\def\include#1{\EJECT\PUSH{#1.tex}\OldINCLUDE{#1}\POP}
\def\PUSH{#}\input #\POP1{\PUSH{#1}\input #1\POP}
\let\originalxxxeverypar\everypar
\newtoks\everypar
\everymath\expandafter{\the\everymath\expandafter\SRC}
\let\zzzxxxbibliography=\bibliography
\def\bibliography#1{\PUSH{\jobname.bbl}\zzzxxxbibliography{#1}\POP}
\output\expandafter{\expandafter\SRCOKfalse\the\output}
\begin{document}
\title[Elliptic problems with dynamical boundary conditions]{Nonlinear
elliptic problems with dynamical boundary conditions of reactive and
reactive-diffusive type}
\author{Ciprian G. Gal}
\address{Department of Mathematics, Florida International University, Miami,
FL 33199, USA}
\email{cgal@fiu.edu}
\author{Martin Meyries}
\address{Department of Mathematics, Karlsruhe Institute of Technology, 76128
Karlsruhe, Germany}
\email{martin.meyries@kit.edu}
\keywords{Nonlinear elliptic equation, dynamic boundary condition, surface
diffusion, boundary differential operators, maximal $L^p$-regularity,
blow-up, global existence, gradient structure, global attractor, convergence
to single equilibria, \L ojasiewicz-Simon inequality.}
\subjclass[2000]{35J60, 35K58, 35K59, 37L30}
\thanks{The second author was supported by the project ME 3848/1-1 of
Deutsche Forschungsgemeinschaft (DFG)}
\maketitle

\begin{abstract}
We investigate classical solutions of nonlinear elliptic equations with two
classes of dynamical boundary conditions, of reactive and reactive-diffusive
type. In the latter case it is shown that well-posedness is to a large
extent independent of the coupling with the elliptic equation. For both
types of boundary conditions we consider blow-up, global existence, global
attractors and convergence to single equilibria.
\end{abstract}

\tableofcontents

\section{Introduction}

The prototype of the elliptic-parabolic initial-boundary value problems that
we consider in this article is 
\begin{equation}
\left\{ 
\begin{array}{ll}
\lambda u-d\Delta u=f(u) & \text{in }(0,T)\times \Omega , \\ 
\partial _{t}u_{\Gamma }-\delta \Delta _{\Gamma }u_{\Gamma }+d\partial _{\nu
}u=g(u_{\Gamma }) & \text{on }(0,T)\times \Gamma , \\ 
u|_{\Gamma }=u_{\Gamma } & \text{on }(0,T)\times \Gamma , \\ 
u_\Gamma|_{t=0}=u_{0} & \text{on }\Gamma .%
\end{array}%
\right.  \label{ell-dyn-intro}
\end{equation}%
We assume that $\Omega \subset \mathbb{R}^{n}$ is a bounded domain with
smooth boundary $\Gamma =\partial \Omega $, that $d>0$, $\delta \geq 0$ and $%
f,g\in C^{\infty }(\mathbb{R})$. Further, $\Delta _{\Gamma }$ is the
Laplace-Beltrami operator and $\partial _{\nu }$ is the outer normal
derivative on $\Gamma $. It is throughout assumed that $f$ is globally
Lipschitz continuous and that $\lambda $ is sufficiently large, in
dependence on $f$. Depending on $\delta $, two classes of boundary
conditions are modelled by \eqref{ell-dyn-intro}. For $\delta >0$ we have
boundary conditions of reactive-diffusive type, and for $\delta =0$ the
boundary conditions are purely reactive.

The motivation to consider (\ref{ell-dyn-intro}) comes from physics. The
function $u$ represents the steady state temperature in a body $\Omega $
such that the rate at which $u$ evolves through the boundary $\Gamma $ is
proportional to the flux on the boundary, up to some correction $\delta
\Delta _{\Gamma }u_\Gamma,$ $\delta \geq 0$, which from a modelling
viewpoint, accounts for small diffusive effects along $\Gamma$. Moreover,
the heat source on $\Gamma $ acts nonlinearly through the function $g$.
Problem (\ref{ell-dyn-intro}) is also important in conductivity (see, e.g., 
\cite{Gr})\ and harmonic analysis due to its connection to the following
eigenvalue problem%
\begin{equation}
\Delta u=0\quad \text{ in }\Omega ,\qquad -\delta \Delta _{\Gamma }u_\Gamma
+\partial _{\nu }u=\xi u_\Gamma \quad \text{ on }\Gamma ,  \label{Sev}
\end{equation}%
which was introduced by Stekloff \cite{St} (initially) in the case $\delta
=0 $. This connection arises because the linear problem associated with (\ref%
{ell-dyn-intro}) (i.e., by letting $\lambda =0$, $f\equiv 0$ and $g\equiv 0$%
) can be solved by the Fourier method in terms of the eigenfunctions of (\ref%
{Sev}) (see \cite{VV09}, which also includes the case $\delta >0$; cf. also 
\cite{Vi} for $\delta =0$). The solvability of the linear problem (assuming $%
\delta =0$) was also investigated by Hintermann \cite{Hi} by means of the
theory of pseudo-differential operators, and by Gr\"{o}ger \cite{Gr} and
Showalter \cite{Sh}, by applying the theory of maximal monotone operators in
the Hilbert-space setting (see, also, \cite{GGZ}). It turns out that this
connection is also essential for solvability of the nonlinear problem (\ref%
{ell-dyn-intro}).

The mathematical study of the prototype (\ref{ell-dyn-intro}) has a
long-standing history. In \cite{Li} J.-L. Lions considered the special case $%
\delta =\lambda =0$, $f\equiv 0$ and $g\left( s\right) =-\left\vert
s\right\vert ^{p}s,$ $p>0$. By standard compactness methods, he proved
existence and uniqueness of global solutions for initial datum $u_{0}\in
H^{1/2}\left( \Gamma \right) $ in this special case. Problem (\ref%
{ell-dyn-intro}) was investigated in the general case by Escher \cite%
{Escher92, Escher94} for nontrivial functions $f,g,$ by also treating
systems of elliptic equations, but always in the case $\delta =0$. His
papers deal with classical solvability and global existence for smooth
initial data. In particular, global existence of classical solutions was
shown assuming $f$ is globally Lipschitz and that $g\left( s\right) s\leq 0$%
, for all $s\in \mathbb{R}$. Constantin, Escher and Yin \cite{CE02, Y}
established, in the case $\delta =\lambda =0$ and $f\equiv 0,$ some natural
structural conditions for the function $g$ so that global existence of
classical solutions holds. Their approach is based on global existence
criteria for ODEs. Boundedness of the global solutions for (\ref%
{ell-dyn-intro}) was shown by Fila and Quittner \cite{FQ} in the case when $%
\delta =\lambda =0$, $f\equiv 0$ and $g$ is a superlinear \emph{subcritical }%
nonlinearity. They have also proved that blow-up in finite time occurs for (%
\ref{ell-dyn-intro}) if $g\left( s\right) =\left\vert s\right\vert
^{p-1}s-as,$ $p>1,$ $a\geq 0$ and if the initial datum $u_{0}$ is "large"
enough \cite[Section 3]{FQ}. Blow-up phenomena for smooth solutions of (\ref%
{ell-dyn-intro}), when $\delta =0$ and $f\equiv 0,$ was also observed by
Kirane \cite{Kir} under some general assumptions on $g$, i.e., when $g(s)>0$%
, for all $s\geq s_{0}$, and%
\begin{equation*}
\int_{s_{0}}^{\infty }\frac{d\xi }{g(\xi )}<\infty .
\end{equation*}%
A version of the problem (\ref{ell-dyn-intro}) for which the dynamic
boundary condition is replaced by%
\begin{equation*}
\left\vert \partial _{t}u_{\Gamma }\right\vert ^{m-1}\partial _{t}u_{\Gamma
}+d\partial _{\nu }u=\left\vert u_{\Gamma }\right\vert ^{p-1}u_{\Gamma }
\qquad \text{on }\Gamma \times \left( 0,T\right) ,
\end{equation*}%
for some $m\geq 1$ and $p\geq 1$ was investigated by Vitillaro \cite{Vi} for
initial data $u_{0}\in H^{1/2}\left( \Gamma \right) $ and $f\equiv 0$. He
mainly devotes his attention to proving the local and the global existence
as well as blow-up of solutions for $m\geq 1$, especially, in the nonlinear
case when $m\neq 1$. Finally, it is interesting to note that, in the case
when $f\neq 0$ but $f$ is \emph{not} globally Lipschitz, global
non-existence without blow-up and non-uniqueness phenomena for (\ref%
{ell-dyn-intro}) can occur (see \cite{FP99}).

All the papers quoted so far deal only with classical issues, such as global
existence, uniqueness and blow-up phenomena for (\ref{ell-dyn-intro}) when $%
\delta =0$. Concerning further regularity and longtime behavior of
solutions, as time goes to infinity, not much seems to be known. This seems
to be due to the fact that the gradient structure of (\ref{ell-dyn-intro})
has not been exploited before. This issue is intimately connected with a 
\emph{key} result on smoothness in $\mathbb{R}_{+}\times \overline{\Omega }$
of solutions for (\ref{ell-dyn-intro}) even when $f\neq 0$ (see Proposition %
\ref{classic-diff}), which is essential to the study of the asymptotic
behavior of the system, in terms of global attractors and $\omega $-limit
sets.

The main novelties of the present paper with respect to previous results on (%
\ref{ell-dyn-intro}) are the following:

(\textbf{i}) The local well-posedness results are extended to the case $%
\delta >0$. In fact, we will consider a more general class of elliptic
problems with quasilinear, nondegenerate dynamic boundary conditions of
reactive-diffusive type. More precisely, we consider the following
generalization of the prototype model \eqref{ell-dyn-intro},%
\begin{equation}
\left\{ 
\begin{array}{ll}
\lambda u+\mathcal{A}u=f(u) & \text{in }(0,T)\times \Omega , \\ 
\partial _{t}u_{\Gamma }+\mathcal{C}(u_{\Gamma })u_{\Gamma }+\mathcal{B}%
(u)=g(u_{\Gamma }) & \text{on }(0,T)\times \Gamma , \\ 
u_\Gamma|_{t=0}=u_{0} & \text{on }\Gamma ,%
\end{array}%
\right.  \label{general}
\end{equation}%
where%
\begin{equation*}
\mathcal{A}u=-\text{div}\big(d\nabla u\big),\qquad \mathcal{C}(u_{\Gamma
})u_{\Gamma }=-\text{div}_{\Gamma }\big(\delta (\cdot ,u_{\Gamma })\nabla
_{\Gamma }u_{\Gamma }\big),
\end{equation*}%
such that $d\in C^{\infty }(\overline{\Omega }),$ $\delta \in C^{\infty
}(\Gamma \times \mathbb{R})$ with $d\geq d_{\ast }>0$ and $\delta \geq
\delta _{\ast }>0$. Moreover, $\nabla_\Gamma$ is the surface gradient and $%
\text{div}_\Gamma$ is the surface divergence. Here and in the sequel we
always assume that $u|_\Gamma = u_\Gamma$. The nonlinear map $\mathcal{B}$
in (\ref{general}) couples the equations in the domain $\Omega $ and on the
boundary $\Gamma $ in a (possibly) nontrivial way. We do \emph{not} impose
any further structural conditions for $\mathcal{B}$ and $g$ other than they
must be of order strictly lower than two and satisfy a local Lipschitz
condition. One example for $\mathcal{B}$ we have in mind is $\mathcal{B}%
(u)=b \nu \cdot (\nabla u)|_{\Gamma }$, with \emph{no} sign restriction on $%
b\in C^{\infty }(\Gamma)$. We prove that for sufficiently large $\lambda $
and a \emph{globally} Lipschitz function $f$ the problem (\ref{general})
generates a (compact) local semiflow of solutions for $u_{0}\in \mathcal{X}%
_{\delta }:=W^{2-2/p,p}(\Gamma )$, $p\in \left( n+1,\infty \right) $, $%
\delta >0$, and establish some further regularity properties for the local
solution $u=u(\cdot ;u_{0})$. For the notion of local semiflow, we refer the
reader to Section \ref{semi}.

The independence of the well-posedness of the coupling was first observed by
Vazquez and Vitillaro \cite{VV09} for a linear model problem with $\mathcal{%
C }= -\Delta_\Gamma$ and $\mathcal{B}=-\partial _{\nu }$ in a Hilbert space
setting. Our approach to the quasilinear problem is based on maximal $L^p$%
-regularity properties of the corresponding linearized dynamic equation on
the boundary. In Section \ref{mbdo} these will be verified for a general
class of elliptic boundary differential operators using localization
techniques. The global Lipschitz condition on $f$ allows to solve the
elliptic equation on $\Omega$ and to rewrite (\ref{general}) as an
initial-value problem for $u_\Gamma$ on $\Gamma$, which can be treated with
the general theory of \cite{KPW10}. The fact that the concrete form of the
coupling $\mathcal{B}$ is inessential is a consequence of the fact that
maximal regularity is invariant under lower order perturbations. For the
precise statements of these results we refer the reader to Section \ref%
{wellp}.

The corresponding result for boundary conditions of purely reactive type,
i.e., $\mathcal{C}\equiv 0$ and $\mathcal{B}=d\partial _{\nu }$ in (\ref%
{general}), was shown in \cite{Escher92}. There the result is based on the
generation properties of the Dirichlet-Neumann operator and thus, the
solutions enjoy worse regularity properties up to $t=0$. In addition to this
we establish the compactness of the solution semiflow on $\mathcal{X}_0 :=
W^{1-1/p,p}(\Gamma)$, $p\in (n,\infty)$, in this case (see Section \ref{cprc}%
).

(\textbf{ii}) The blow-up results for problem (\ref{ell-dyn-intro}), from 
\cite{Kir} and \cite{Vulkov}, are also extended to the case when $\delta >0$
and $f\neq 0$. Our approach is based on the method of subsolutions and a
comparison lemma, and is inspired by \cite{AMTR} and \cite{Rothe} (see
Section \ref{bl}). We further show global existence of solutions of (\ref%
{ell-dyn-intro}) under the natural assumption that $g(\xi )\xi \leq
c_{g}(\left\vert \xi \right\vert ^{2}+1)$ for all $\xi \in \mathbb{R}$ by
performing a Moser-Alikakos iteration procedure as in \cite{Gal0, Mey10}.
Here an inequality of Poincar\'{e}-Young type allows to connect the
structure of the elliptic equation with that of the dynamic equation on $%
\Gamma $ (see Section \ref{globals}).

(\textbf{iii}) We prove the smoothness of solutions of (\ref{ell-dyn-intro}%
)\ in both space and time exploiting a variation of parameters formula for
the trace $u_{\Gamma }$ and the implicit function theorem, which is entirely
new (see Section \ref{classical-solutions}). Consequently, taking advantage
of this smoothness, we can show that (\ref{ell-dyn-intro}) has a gradient
structure, and as a result establish the existence of a finite-dimensional
global attractor in the phase space $\mathcal{X}_{\delta }$ for both types
of boundary conditions. Here the main assumption is that the first
eigenvalue of a Stekloff-like eigenvalue problem (similar to (\ref{Sev})) is
positive (see Section \ref{attractors}).


(\textbf{iv}) The $\omega $-limit sets of (\ref{ell-dyn-intro}) can exhibit
a complicated structure if the functions $f,g$ are non-monotone and, a
fortiori, the same is true for the global attractor. Indeed, when $f,g$ are
non-monotone (i.e., the related potentials $F\left( s\right)
=\int_{0}^{s}f\left( y\right) dy,$ $G\left( s\right) =\int_{0}^{s}g\left(
y\right) dy$ are non-convex) this can happen if the stationary problem
associated with (\ref{ell-dyn-intro})\ possesses a continuum of nonconstant
solutions. Some examples which show that the $\omega $-limit set can be a
continuum are provided in \cite{PS}. However, assuming the nonlinearities $%
f,g$ to be real analytic, we prove the convergence of a given trajectory $%
u=u(t;u_{0}),$ $u_{0}\in \mathcal{X}_{\delta },$ as time goes to infinity,
to a single equilibrium of (\ref{ell-dyn-intro}). This shows, in a strong
form, the asymptotic stability of $u(t;u_{0})$ for an arbitrary (but given)
initial datum $u_{0}\in \mathcal{X}_{\delta }$. This type of result exploits
a technique which is based on the so-called \L ojasiewicz-Simon inequality
(see Section \ref{cte}; cf. also \cite{SW, Wu}).

Finally, it is worth mentioning that most of our results can be also
extended to systems of nonlinear elliptic equations subject to both types of
boundary conditions.


The \emph{plan of the paper} goes as follows. In Section \ref{prelim}, we
introduce the functional analytic framework associated with (\ref%
{ell-dyn-intro})\ and (\ref{general}), respectively. In Section \ref{mbdo},
maximal $L^{p}$-regularity theory is developed for elliptic boundary
differential operators of second order. Then, in Section \ref{wellp} (and
corresponding subsections) we prove (local) well-posedness results for %
\eqref{general} and establish the existence of a compact (local) semiflow on
the corresponding phase spaces. The final Section \ref{qp} is further
divided into five parts: the first part provides the key result which shows
the smoothness of solutions in both space and time, while the second and
third parts deal with blow-up phenomena and global existence, respectively.
Finally, the last two subsections deal with the asymptotic behavior as time
goes to infinity, in terms of global attractors and convergence of solutions
to single equilbria.

\section{Preliminaries}

\label{prelim}

\subsection{Function spaces}

We briefly describe the function spaces that are used in the paper. Details
and proofs can be found in \cite{Lunardi09, Tri94}.

Throughout, all function spaces under consideration are real. Let $p\in
\lbrack 1,\infty ]$. If $\Omega \subseteq \mathbb{R}^{n}$ is open, we denote
by $L^{p}(\Omega )$ the usual Lebesgue spaces. Now let $\Omega $ have a
(sufficiently) smooth boundary. Then for $s\geq 0$ and $p\in \lbrack
1,\infty )$ we denote by $H^{s,p}(\Omega )$ the Bessel-potential spaces and
by $W^{s,p}(\Omega )$ the Slobodetskij spaces. One has $H^{s,2}(\Omega
)=W^{s,2}(\Omega )$ for all $s$, but for $p\neq 2$ the identity $%
H^{s,p}(\Omega )=W^{s,p}(\Omega )$ is only true if $s\in \mathbb{N}_{0}$. If 
$s\in \mathbb{N}_{0}$, then $H^{s,p}(\Omega )$ and $W^{s,p}(\Omega )$
coincide with the usual Sobolev spaces. In the case of noninteger
differentiability, for our purposes it suffices to consider these spaces as
interpolation spaces. If $s=[s]+s_{\ast }\notin \mathbb{N}_{0}$ with $[s]\in 
\mathbb{N}_{0}$ and $s_{\ast }\in (0,1)$, then 
\begin{equation}
H^{s,p}=[H^{[s],p},H^{[s]+1,p}]_{s_{\ast }},\qquad
W^{s,p}=(W^{[s],p},W^{[s]+1,p})_{s_{\ast },p},  \label{interpb}
\end{equation}%
where $[\cdot ,\cdot ]_{s_{\ast }}$ and $(\cdot ,\cdot )_{s_{\ast },p}$
denote complex and real interpolation, respectively. Moreover, $%
H^{s,p}=[L^{p},H^{2,p}]_{s/2}$ for $s\in (0,2)$ and $W^{s,p}(\Omega
)=(L^{p},W^{2,p})_{s/2,p}$ for $s\in (0,2)$, $s\neq 1$. A useful tool are
interpolation inequalities. We shall make particular use of 
\begin{equation}
\Vert u\Vert _{H^{s,p}}\leq \Vert u\Vert _{L^{p}}^{1-s/2}\Vert u\Vert
_{H^{2,p}}^{s/2},\qquad \Vert u\Vert _{W^{s,p}}\leq C\,\Vert u\Vert
_{L^{p}}^{1-s/2}\Vert u\Vert _{W^{2,p}}^{s/2},  \label{interp}
\end{equation}%
which is valid for all $u\in H^{2,p}=W^{2,p}$.

The corresponding function spaces over the boundary $\Gamma = \partial\Omega$
of a bounded smooth domain $\Omega \subset \mathbb{R}^n$ are defined via
local charts. Let $\text{g}_i:U_i\subset \mathbb{R}^{n-1} \to \Gamma$ be a
finite family of parametrizations such that $\bigcup_i \text{g}_i(U_i)$
covers $\Gamma$, and let $\{\psi_i\}$ be a partition of unity for $\Gamma$
subordinate to this cover. Then for $s\geq 0$ we have 
\begin{equation*}
H^{s,p}(\Gamma) = \big \{ u \in L^p(\Gamma)\;:\; (\psi_i u)\circ \text{g}_i
\in H^{s,p}(\mathbb{R}^{n-1}) \text{ for all $i$}\big\},
\end{equation*}
and an equivalent norm is given by $\|u\|_{H^{s,p}(\Gamma)} = \sum_i
\|(\psi_i u)\circ \text{g}_i\|_{H^{s,p}(\mathbb{R}^{n-1})}. $ The spaces $%
W^{s,p}(\Gamma)$ are defined in the same way, replacing $H$ by $W $. In this
way the properties of the spaces over $\Omega$ described above carry over to
the spaces over $\Gamma$.

For $p\in (1,\infty)$ and $s > 1/p$ the trace $\text{tr} \,u = u|_\Gamma$
extends to a continuous operator 
\begin{equation*}
\text{tr}: H^{s,p}(\Omega) \to W^{s-1/p,p}(\Gamma).
\end{equation*}
Here we exclude the case $s-1/p \in \mathbb{N}$ for $p\neq 2$.


\subsection{Semiflows}

\label{semi}

Let $\mathcal{X}$ be a Banach space and let $t^+: \mathcal{X }\to (0,\infty]$
be lower semicontinuous. Then we call a map 
\begin{equation*}
S: \bigcup_{x\in \mathcal{X}} [0,t^+(x)) \times \{x\} \to \mathcal{X}
\end{equation*}
a local semiflow on $\mathcal{X}$ if for all $x\in \mathcal{X}$ it holds
that $S(\cdot;x):[0,t^+(x)) \to \mathcal{X}$ is continuous, if $S(t,\cdot):
B_r(x) \subset \mathcal{X }\to \mathcal{X}$ is continuous for $t < t^+(x)$
and sufficiently small $r >0$, if $S(0;\cdot ) = \text{id}_{\mathcal{X}}$, $%
S(t+s;x) = S(t; S(s;x))$ and if $t^+(x) < \infty$ implies that $\|S(t;x)\|_{%
\mathcal{X}} \to \infty$ as $t\to t^+$. In addition we call $S$ compact, if
for all bounded sets $M\subset \mathcal{X}$ with $t^+(M) \geq T > 0$ and all 
$t\in (0,T)$ it holds that $S(t;M)$ is relatively compact in $\mathcal{X}$.

If $t^+(x) = \infty$ for all $x\in \mathcal{X}$, then we call $S$ a global
semiflow. In this case our notion of a semiflow coincides with the one in 
\cite{CD}.

Note that, in contrast to parts of the literature, we include the condition
for global existence (i.e., $t^+ = \infty$) already in the definition of a
local semiflow.

\section{Maximal $L^{p}$-regularity for boundary differential operators}

\label{mbdo}

In this section we show maximal $L^p$-regularity for elliptic boundary
differential operators of second order.

\subsection{Boundary differential operators}

Throughout, let $\Omega \subset \mathbb{R}^{n}$ be a bounded domain with
smooth boundary $\Gamma =\partial \Omega$. We describe our notion of a
differential operator on $\Gamma$ with possibly nonsmooth coefficients.

Let $(0,T)$ be a finite or infinite time interval. We call a globally
defined, linear map $\mathcal{C}:(0,T)\times C^{\infty }(\Gamma )\rightarrow
L^{1}(\Gamma )$ a (non-autonomous) \emph{boundary differential operator} of
order $k\in \mathbb{N}$, if for all $t\in (0,T)$ and all parametrizations $%
\text{g}:U\subset \mathbb{R}^{n-1}\rightarrow \Gamma $ it holds 
\begin{equation*}
(\mathcal{C}(t,\cdot )u)\circ \text{g}(x)=\sum_{|\gamma |\leq k}c_{\gamma }^{%
\text{g}}(t,x)D_{n-1}^{\gamma }(u\circ \text{g})(x),\qquad x\in U,
\end{equation*}%
with local coefficients $c_{\gamma }^{\text{g}}(t,\cdot )\in L^{1}(U)$ and $%
D_{n-1}=-\text{i}\nabla _{n-1}$. The coefficients do not have to be globally
defined and may in fact depend on the parametrization $\text{g}$. The
examples we have in mind are the Laplace-Beltrami operator $\Delta _{\Gamma
}=\text{div}_{\Gamma }\nabla _{\Gamma }$, which is in coordinates given by 
\begin{equation*}
(\Delta _{\Gamma }u)\circ \text{g}=\frac{1}{\sqrt{\left\vert \text{G}%
\right\vert }}\sum_{i,j=1}^{n-1}\partial _{i}(\sqrt{|\text{G}|}\text{g}%
^{ij}\partial _{j}(u\circ \text{g})),
\end{equation*}%
and, for a tangential vector field $\mathcal{V}$ on $\Gamma $, a surface
convection term $\mathcal{V}\nabla _{\Gamma }$, i.e., 
\begin{equation*}
(\mathcal{V}\nabla _{\Gamma }u)\circ \text{g}=\sum_{i,j=1}^{n-1}\text{g}%
^{ij}(\mathcal{V}\cdot \partial _{i}\text{g})\partial _{j}(u\circ \text{g}).
\end{equation*}%
Here $\text{G}^{-1}=(\text{g}^{ij})_{i,j}$ is the inverse of the fundamental
form $\text{G}$ corresponding to $\text{g}$.

As in the euclidian case, the regularity of the local coefficients $%
c_\gamma^{\text{g}}$ decides on which scale of function spaces over $\Gamma $
the operator $\mathcal{C}(t,\cdot )$ acts. For instance, if $c_{\gamma }^{%
\text{g}}(t,\cdot )\in L^{\infty }(U)$ for all parametrizations $g$ and all $%
\gamma$, then we obtain for all $p\in \lbrack 1,\infty ]$ an estimate 
\begin{equation*}
\Vert \mathcal{C}(t,\cdot )u\Vert _{L^{p}(\Gamma )}\leq C\,\Vert u\Vert
_{W^{k,p}(\Gamma )},\qquad u\in C^{\infty }(\Gamma ).
\end{equation*}%
In this case $\mathcal{C}(t,\cdot )$ extends uniquely to a bounded linear
map $W^{k,p}(\Gamma )\rightarrow L^{p}(\Gamma )$, or to a closed operator on 
$L^{p}(\Gamma )$ with domain $W^{k,p}(\Gamma )$. Of course, in view of
Sobolev embeddings, for such an extension the regularity of the coefficients
can be lowered in many cases.

Finally, structural conditions like ellipticity of a boundary differential
operator $\mathcal{C}$ can also be imposed to hold locally with respect to
all parametrizations, see e.g. condition (E) below.

\subsection{Maximal $L^p$-regularity}

Let $\mathcal{C}$ be a boundary differential operator of order $k=2$.
Consider for a finite time interval $(0,T)$ the inhomogeneous Cauchy problem 
\begin{equation*}
\partial_{t}u + \mathcal{C}(t,x) u=g(t,x)\quad \text{on }(0,T)\times \Gamma
,\qquad u|_{t=0}=u_{0}\quad \text{on }\Gamma .
\end{equation*}
For $p\in (1,\infty)$ we take $g\in L^p ((0,T)\times \Gamma)$ and are thus
looking for solutions $u$ that belong to the space 
\begin{equation*}
\mathbb{E}(\Gamma) := W^{1,p}(0,T; L^p(\Gamma)) \cap L^p(0,T;
W^{2,p}(\Gamma)).
\end{equation*}
We want that for all parametrizations $\text{g}:U\to \Gamma$ and all $%
|\gamma|\leq 2$ the terms $c_\gamma^\text{g} D_{n-1}^\gamma$ arising in the
local representation of $\mathcal{C}$ are continuous from $\mathbb{E}(U)$ to 
$L^p((0,T)\times U)$. Then in particular $\mathcal{C}:\mathbb{E}(\Gamma) \to
L^p((0,T)\times \Gamma)$ is continuous. Using Sobolev embeddings and
H\"older's inequality, it can be shown as in \cite[Lemma 1.3.15]{Mey10} that
the following assumptions are sufficient for this purpose.

\begin{itemize}
\item[(R)] Let $\text{g}:U\rightarrow \Gamma$ be any parametrization of $%
\Gamma $. Then for $|\gamma |=2$ it holds $c_{\gamma }^{\text{g}}\in
BU\!C([0,T]\times U)$, and in case $|\gamma |<2$ one of the following
conditions is valid: either $p > n+1$ and $c_{\gamma }^{\text{g}}\in
L^{p}((0,T)\times U)$, or there are $r_{\gamma },s_{\gamma }\in \lbrack
p,\infty )$ with $\frac{1}{s_{\gamma }}+\frac{n-1}{2r_{\gamma }}<1-\frac{%
|\gamma |}{2}$ such that $c_{\gamma }^{\text{g}}\in L^{s_{\gamma
}}(0,T;L^{r_{\gamma }}(U))$.
\end{itemize}

As structural conditions for $\mathcal{C}$ we assume \emph{local
parameter-ellipticity} (cf. \cite{DHP03} for the euclidian case). Observe
that this is a condition only for the highest order coefficients.

\begin{itemize}
\item[(E)] For all parametrizations $g:U\rightarrow \Gamma $, all $t\in
\lbrack 0,T]$, $x\in U$ and $\xi \in \mathbb{R}^{n-1}$ with $|\xi |=1$ it
holds that $\sum_{|\gamma |=2}c_{\gamma }^{\text{g}}(t,x)\xi ^{\gamma } > 0$.
\end{itemize}

\begin{example}
Let $\mathcal{C }u = - \text{div}_\Gamma (\delta \nabla_\Gamma u)$ for a $%
C^1 $-function $\delta: (0,T)\times \Gamma \to \mathbb{R}$ with $\delta \geq
\delta_* > 0$. Then $\mathcal{C}$ satisfies (E).
\end{example}

Our maximal $L^p$-regularity result is now as follows.

\begin{theorem}
\label{thm1}Let $p\in (1,\infty )$ and $T\in (0,\infty )$. Assume that $%
\Omega\subset \mathbb{R}^n$ is a bounded domain with smooth boundary $\Gamma
= \partial\Omega$, and that $\mathcal{C}$ is a differential operator on $%
\Gamma$ of second order satisfying \emph{(R)} and \emph{(E)}. Then there is
a unique solution 
\begin{equation*}
u\in \mathbb{E}(\Gamma)=W^{1,p} (0,T;L^{p}(\Gamma))\cap L^{p}
(0,T;W^{2,p}(\Gamma))
\end{equation*}%
of the problem 
\begin{equation}
\left\{ 
\begin{array}{ll}
\partial _{t}u+\mathcal{C}(t,x)u=g(t,x) & \emph{\text{on }} (0,T)\times
\Gamma, \\ 
u|_{t=0}=u_{0} & \emph{\text{on }}\Gamma ,%
\end{array}%
\right.  \label{1}
\end{equation}%
if and only if the data is subject to 
\begin{equation*}
g\in L^{p}((0,T)\times \Gamma), \qquad u_{0}\in W^{2-2/p,p}(\Gamma).
\end{equation*}
Given $T_{0}>0$, there is a constant $C$, which is independent of the data
and $T\in (0,T_{0})$, such that 
\begin{equation}
\Vert u\Vert _{\mathbb{E}(\Gamma )}\leq C\left( \Vert g\Vert
_{L^{p}((0,T)\times \Gamma)}+\Vert u_{0}\Vert _{W^{2-2/p,p}(\Gamma )}\right)
.  \label{22222}
\end{equation}
Moreover, in the autonomous case, i.e., if $\mathcal{C}$ is independent of $%
t $, then $- \mathcal{C}$ generates an analytic $C_0$-semigroup on $%
L^p(\Gamma) $.
\end{theorem}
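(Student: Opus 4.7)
The plan is a classical localization-and-perturbation argument, reducing to the flat case on $\mathbb{R}^{n-1}$ where maximal $L^p$-regularity is well-known, and then patching via a partition of unity. Necessity of the conditions is immediate: the mixed-derivative/trace embedding $\mathbb{E}(\Gamma)\hookrightarrow C([0,T];W^{2-2/p,p}(\Gamma))$ forces $u_0\in W^{2-2/p,p}(\Gamma)$, while the coefficient conditions in (R), combined with Sobolev embeddings precisely as in \cite[Lemma~1.3.15]{Mey10}, yield continuity of $\mathcal{C}:\mathbb{E}(\Gamma)\to L^p((0,T)\times \Gamma)$. Uniqueness will follow from the a priori estimate applied to the difference of two solutions, so the substance of the proof is existence together with the bound \eqref{22222}.

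For sufficiency, I would fix a finite atlas $\{(U_i,\text{g}_i)\}_{i=1}^N$ of $\Gamma$ with a subordinate partition of unity $\{\psi_i\}$ and cut-offs $\chi_i\in C_c^\infty(U_i)$ equal to one on $\mathrm{supp}\,\psi_i$. Transporting $v_i:=(\psi_i u)\circ \text{g}_i$ and extending by zero to $\mathbb{R}^{n-1}$, the equation \eqref{1} becomes
\[
\partial_t v_i + \sum_{|\gamma|=2} c_{\gamma,i}(t,x)D^{\gamma}v_i \;=\; h_i - \mathcal{R}_i u ,
\]
where $h_i$ is the transported right-hand side and $\mathcal{R}_i u$ gathers the transported lower-order part of $\mathcal{C}$ together with the commutator $[\mathcal{C},\psi_i]u$. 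By (R) the top-order coefficients, after trivial extension, lie in $BUC([0,T]\times \mathbb{R}^{n-1})$; by (E) and compactness of $[0,T]\times\Gamma$ one has uniform coercivity $\sum_{|\gamma|=2}c_{\gamma,i}(t,x)\xi^\gamma \geq c_\ast|\xi|^2$. Freezing these coefficients at a base point, localizing further to balls of sufficiently small radius so that the oscillation is controlled, and unfreezing by a Neumann-series argument, I would invoke the classical maximal $L^p$-regularity result on $\mathbb{R}^{n-1}$ for uniformly parabolic second-order operators with $BUC$ top-order coefficients (\cite{DHP03}, in the non-autonomous framework of \cite{KPW10}). This yields existence and the local a priori estimate in each chart.

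Next, the terms inside $\mathcal{R}_i$ are of order at most one, and by (R) they map $\mathbb{E}$ continuously into $L^p$, with operator norm that tends to zero on functions with $v|_{t=0}=0$ as $T\downarrow 0$ (the short-time smallness coming from the continuous embedding $\mathbb{E}_0\hookrightarrow C([0,T];W^{2-2/p,p})$ and the vanishing trace). Standard invariance of maximal regularity under such lower-order perturbations then absorbs $\mathcal{R}_i$ on a short interval $(0,\tau)$, with $\tau$ depending only on the uniform $BUC$-moduli of the top-order coefficients and on the constants from (R). Reassembling $u=\sum_i(\chi_i v_i)\circ \text{g}_i^{-1}$ produces the solution on $(0,\tau)$; iteration in time extends it to $(0,T)$ with the stated estimate. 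Uniformity of the constant for $T\in(0,T_0)$ is obtained by zero-extending $g$ to $(0,T_0)$, solving on the full interval, and restricting. In the autonomous case, maximal $L^p$-regularity of $\mathcal{C}$ with domain $W^{2,p}(\Gamma)$ on the UMD space $L^p(\Gamma)$ implies by the standard characterization (see \cite{DHP03,KPW10}) that $-\mathcal{C}$ is sectorial of angle strictly less than $\pi/2$, hence generates a bounded analytic $C_0$-semigroup on $L^p(\Gamma)$.

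The main obstacle I expect is the quantitative control of the chart-level perturbations: one must simultaneously bound the commutator $[\mathcal{C},\psi_i]$ and the error produced by replacing $c_{\gamma}(t,x)$ by $c_{\gamma}(t,x_0)$ for $|\gamma|=2$, in the norm of $\mathbb{E}\to L^p$, uniformly in the chart, in the base point, and in $t$. The $BUC$ hypothesis on the top-order coefficients is precisely what makes these errors arbitrarily small on small balls, while the tailored integrability conditions in (R) for the lower-order coefficients are exactly what is needed so that $\mathcal{R}_i$ is relatively $\mathbb{E}$-bounded with a small constant for short times. Compactness of $\Gamma$ and of $[0,T]$ then makes the patching and iteration uniform and closes the argument.
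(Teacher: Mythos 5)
Your strategy coincides with the paper's up to and including the chart-level analysis: localization with $\psi_i$, zero-extension, small-oscillation/frozen-coefficient maximal regularity on $\mathbb{R}^{n-1}$, absorption of the first-order and commutator terms via interpolation and short time, and the same treatment of necessity, of the uniformity of the constant in $T$, and of the semigroup statement. The genuine gap is at the reassembly step. The local functions $w_i$ solve the transported equations with right-hand sides $(\psi_i g+[\mathcal{C},\psi_i]u)\circ\text{g}_i$, and even after you close the resulting coupled system by a fixed point in $u$, the patched function $u=\sum_i(\chi_i w_i)\circ\text{g}_i^{-1}$ does \emph{not} solve \eqref{1}. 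Applying $\partial_t+\mathcal{C}$ to it gives
\begin{equation*}
(\partial_t+\mathcal{C})u \;=\; g\;+\;\sum_i[\mathcal{C},\chi_i]\bigl(w_i\circ\text{g}_i^{-1}\bigr),
\end{equation*}
because the inner commutators $[\mathcal{C},\psi_i]u$ telescope to zero (as $\sum_i\psi_i\equiv 1$) while the outer ones $[\mathcal{C},\chi_i]w_i$, supported on the chart overlaps, do not. Your construction therefore yields uniqueness and the a priori bound \eqref{22222} (every solution is a fixed point of the patching map), but not existence: identifying the patched fixed point with a solution of \eqref{1} is exactly the false step, and "absorbing lower-order perturbations" does not address it, since this error is an artifact of the overlapping cut-offs rather than a perturbation of the operator.

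The paper repairs this with a second fixed-point argument at the level of the data (its Step 4): writing $(\partial_t+\mathcal{C})\mathcal{F}(h,u_0)=h+\mathcal{K}h$, where $\mathcal{F}(h,u_0)$ is the patched fixed point and $\mathcal{K}h=\sum_i[\mathcal{C},\phi_i]\mathcal{S}_i(\cdots)$ collects the overlap errors, it shows that $h\mapsto g-\mathcal{K}h$ is a strict contraction on $L^p((0,T)\times\Gamma)$ for small $T$ (using that $[\mathcal{C},\phi_i]$ is of first order and the local solution operators gain one derivative), and then solves \eqref{1} with the corrected datum $g^*=g-\mathcal{K}g^*$. You would need to add this correction, or alternatively replace the existence part by a method-of-continuity argument joining $\mathcal{C}$ to $1-\Delta_\Gamma$ through operators satisfying (R) and (E) uniformly, for which your localization already supplies the required uniform a priori estimate. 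As written, the proposal does not produce a solution of \eqref{1}.
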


\begin{proof}
\emph{Step 1.} If $u\in \mathbb{E}(\Gamma)$ solves (\ref{1}), then $g\in
L^{p}((0,T)\times \Gamma)$ follows from (R), and further $u_{0}\in
W^{2-2/p,p}(\Gamma )$ is a consequence of e.g. \cite[Theorem 4.2]{MS11a}.
Next, assume that a unique solution of \eqref{1} exists for all given data.
Then the corresponding solution operator is continuous $L^{p}((0,T)\times
\Gamma)\times W^{2 -2/p,p}(\Gamma )\rightarrow \mathbb{E}(\Gamma)$ due to
the open mapping theorem. This gives \eqref{22222}. The uniformity of the
constant with respect to $T\in (0,T_0)$ follows from an
extension-restriction argument and the uniqueness of solutions. Further, in
this case the generator property of $- \mathcal{C}$ follows from \cite[%
Corollary 4.4]{Do00}, and the strong continuity of the semigroup is a
consequence of the density of $W^{2,p}(\Gamma)$ in $L^p(\Gamma)$.

We thus have to show the unique solvability of (\ref{1}) in $\mathbb{E}%
(\Gamma)$ for all data $g\in L^p((0,T)\times \Gamma)$ and $u_0\in
W^{2-2/p,p}(\Gamma) $. A compactness argument shows that it suffices to do
this for one (possibly small) $T>0$, which is independent of the data.

\emph{Step 2.} Choose a finite number of parametrizations $\text{g}_{i}$
with domains $U_{i}$ such that $\bigcup_i \text{g}_i(U_i)$ covers $\Gamma$,
and a partition of unity $\{\psi _{i}\}$ for $\Gamma $ subordinate to this
cover. Then $u\in \mathbb{E}(\Gamma)$ solves \eqref{1} if and only if for
all $i,$ the function $v_{i}:=(\psi _{i}u)\circ \text{g}_{i}$ solves%
\begin{equation}
\partial _{t}v_{i}+\mathcal{C}^{\text{g}_{i}}v_{i}=g_{i}, \quad \text{on }%
(0,T)\times U_{i}, \qquad v_{i}|_{t=0}=v_{i}^{0}, \quad \text{on }U_{i}.
\label{6}
\end{equation}%
Here the local operator $\mathcal{C}^{\text{g}_{i}}$ is given by%
\begin{equation*}
\mathcal{C}^{\text{g}_{i}}(t,x)=\sum_{|\gamma |\leq 2}c_{\gamma }^{\text{g}%
_{i}}(t,x)D^{\gamma },\qquad (t,x)\in (0,T)\times U_{i},
\end{equation*}%
and the transformed data is given by%
\begin{equation*}
g_{i}:=\left( \psi _{i}g+[\mathcal{C},\psi _{i}]u\right) \circ \text{g}%
_{i},\qquad v_{i}^{0}:=(\psi _{i}u^{0})\circ \text{g}_{i},
\end{equation*}%
where $[\cdot ,\cdot ]$ denotes the commutator bracket, i.e., $[\mathcal{C}%
,\psi _{i}]u = \mathcal{C}(\psi_i u) - \psi_i \mathcal{C }u.$

Identifying $v_{i}$, $g_{i}$ and $v_{i}^{0}$ with their trivial extensions
to $\mathbb{R}^{n-1}$, we obtain 
\begin{equation*}
v_{i}\in \mathbb{E}(\mathbb{R}^{n-1}), \qquad g_{i}\in L ^{p}((0,T)\times 
\mathbb{R}^{n-1}), \qquad v_{i}^{0}\in W^{2 -2/p,p}(\mathbb{R}^{n-1}).
\end{equation*}
We extend the top order coefficients $c_{\gamma }^{\text{g}_{i}} $, $|\gamma
|=2$, to coefficients $c_{\gamma }^{i}\in BU\!C([0,T]\times \mathbb{R}%
^{n-1}) $, using only values from the image of $c_{\gamma }^{\text{g}_{i}}$
(assuming e.g. $U_{i}$ to be ball and reflecting on $\partial U_i$). By
continuity of the $c_{\gamma }^{\text{g}_{i}} $, the oscillation of the
extend top order coefficients becomes small if the diameter of the $U_{i}$
is small. The lower order coefficients $c_{\gamma }^{\text{g}_{i}}$, $%
|\gamma |<2$, are trivially extended to $c_{\gamma }^{i}$ on $(0,T)\times 
\mathbb{R}^{n-1}$. The extended coefficients $c_\gamma^i$, $|\gamma|\leq 2$,
induce a differential operator $\mathcal{C}^{i}(t,x)$ acting on functions
over $(0,T)\times \mathbb{R}^{n-1}$, which satisfies (R) and (E) for $\Gamma
= \mathbb{R}^{n-1}$. Note in particular that (E) is a pointwise condition.
The (trivial extension of) $v_{i}$ solves the full space problem 
\begin{equation}
\partial _{t}w+\mathcal{C}^{i}(t,x)w=g_{i}\quad \text{on }(0,T)\times 
\mathbb{R}^{n-1}, \qquad w|_{t=0}=v_{i}^{0} \quad \text{on }\mathbb{R}^{n-1}.
\label{3}
\end{equation}%
It is well-known that there is a solution operator $\mathcal{S}_i(g_i,
v_0^i) $ for (\ref{3}), which is continuous 
\begin{equation*}
\mathcal{S}_{i}:L^{p}((0,T)\times \mathbb{R}^{n-1})\times W^{2 -2/p,p}(%
\mathbb{R}^{n-1})\rightarrow \mathbb{E}(\mathbb{R}^{n-1}).
\end{equation*}%
We refer to \cite[Proposition 2.3.2]{Mey10} for the case of top order
coefficients with small oscillation, which applies to the present case.
Hence 
\begin{equation*}
v_{i}=\mathcal{S}_{i}(g_{i},v_{i}^{0})|_{U_{i}}.
\end{equation*}

\emph{Step 3.} Take $\phi _{i}\in C^\infty(\Gamma)$ with $\phi _{i}\equiv 1$
on $\text{supp}\,\psi _{i}$ and $\text{supp}\,\phi _{i}\subset \text{g}%
_i(U_{i})$. On the complete metric space 
\begin{equation*}
\mathbb{Y}_{u_0}:=\left\{ u\in \mathbb{E}(\Gamma)\;:\;
u|_{t=0}=u_{0}\right\} ,
\end{equation*}%
which is nonempty by \cite[Lemma 4.3]{MS11a}, we define the map $\mathcal{S}%
_{g,u_{0}}$ by%
\begin{equation*}
\mathcal{S}_{g,u_{0}}(u):=\sum_{i}\phi _{i}\mathcal{S}_{i}\big( (\psi _{i}g+[%
\mathcal{C},\psi _{i}]u)\circ \text{g}_{i},(\psi _{i}u_{0})\circ \text{g}_{i}%
\big) |_{U_{i}}\circ \text{g}_{i}^{-1}.
\end{equation*}%
Since $\mathcal{S}_i(g_i,v_i^0)|_{t=0}= v_i^0$, we have that $\mathcal{S}%
_{g,u^{0}}$ is a self-mapping on $\mathbb{Y}_{u_0}$. We show that it is a
strict contraction if $T$ is sufficiently small. For $u,v\in \mathbb{Y}%
_{u_0} $ we have 
\begin{align}
\Vert \mathcal{S}_{g,u^{0}}(u)-\mathcal{S}_{g,u^{0}}(v)\Vert _{\mathbb{E}%
(\Gamma )}& \,\leq C\sum_i \Vert \mathcal{S}_{i}\left( [\mathcal{C},\psi
_{i}](u-v)\circ g_{i},0\right) \Vert _{\mathbb{E}(\mathbb{R}^{n-1})}  \notag
\\
& \,\leq C \sum_{i} \Vert \lbrack \mathcal{C},\psi _{i}](u-v)\circ
g_{i}\Vert _{L^{p}((0,T)\times \mathbb{R}^{n-1})}.  \label{222}
\end{align}%
Since $[\mathcal{C},\psi _{i}](u-v)\circ \text{g}_{i}$ involves at most the
first derivatives of $(u-v)\circ \text{g}_i$, we have for arbitrary $\eta >0$
that 
\begin{align}
\Vert \lbrack \mathcal{C},\psi _{i}](u-v)\circ \text{g}_{i}\Vert
_{L^{p}((0,T)\times \mathbb{R}^{n-1})}& \,\leq C\,\Vert (u-v)\circ \text{g}%
_i\Vert _{L^{p}(0,T;W^{1,p}(\mathbb{R}^{n-1}))}  \notag \\
& \,\leq \eta \Vert u-v\Vert _{L^{p}(0,T;W^{2,p}(\Gamma ))}  \notag \\
& \quad +C_{\eta }\Vert u-v\Vert _{L^{p}((0,T)\times \mathbb{R}^{n-1})} 
\notag \\
& \,\leq (\eta +C_{\eta }T)\Vert u-v\Vert _{\mathbb{E}(\Gamma)},  \label{4}
\end{align}%
using the interpolation inequality for $W^{1,p}(\Gamma)$, Young's inequality
and Poincare's inequality for $L^p (0,T; L^p(\Gamma))$ (see \cite[Lemma 2.12]%
{MS11a}).

Thus if $\eta $ and $T$ are sufficiently small, then $\mathcal{S}_{g,u_{0}}$
has a unique fixed point on $\mathbb{Y}_{u_0}$. Observe that this is true
for all $g$ and $u_0$, and that the choice of $T$ is independent of $g$ and $%
u_0$. The considerations in Step 2 show that every solution of (\ref{1}) is
necessarily a fixed point of $\mathcal{S}_{g,u_{0}}$. We have thus already
shown that solutions of (\ref{1}) are unique. However, due to the nonempty
intersections of the $\text{g}_i(U_{i})$, the fixed point does in general
not solve (\ref{1}).

\emph{Step 4.} To find $g^{\ast }\in L^{p}((0,T)\times \Gamma)$ for which $%
\mathcal{S}_{g^{\ast },u_{0}}$ solves (\ref{1}) for given $g$ and $u_0 $ we
consider the fixed point map $\mathcal{F}$, defined by 
\begin{equation*}
\mathcal{F}:L^{p}((0,T)\times \Gamma)\times W^{2 -2/p,p}(\Gamma )\rightarrow 
\mathbb{E}(\Gamma),\qquad \mathcal{S}_{h,v_{0}}\left( \mathcal{F}%
(h,v_0)\right) =\mathcal{F}(h,v_0).
\end{equation*}%
For $h\in L^{p}((0,T)\times \Gamma)$ and $u_0 \in W^{2 -2/p,p}(\Gamma)$ we
have $\mathcal{F}(h,u_0)|_{t=0} = u_0$ and 
\begin{equation}
\left( \partial _{t}+\mathcal{C}\right) \mathcal{F}(h,u_{0})=h+\mathcal{K}h,
\label{8}
\end{equation}%
with the error term%
\begin{equation*}
\mathcal{K}h:=\sum_{i}[\mathcal{C},\phi _{i}]\mathcal{S}_{i}\big( (\psi
_{i}h+[\mathcal{C},\psi _{i}]\mathcal{F}(h,u_{0}))\circ \text{g}_{i},(\psi
_{i}u_{0})\circ \text{g}_{i}\big)|_{U_i} \circ \text{g}_{i}^{-1}.
\end{equation*}%
We use again the contraction principle to show that the map $h\mapsto g-%
\mathcal{K}h$ has a fixed point $g^{\ast }$ on $L^{p}((0,T)\times \Gamma)$.
Then $\mathcal{F}(g^{\ast },u_{0})$ solves (\ref{1}) for given $g\in
L^{p}((0,T)\times \Gamma)$ by (\ref{8}).

First note that $\mathcal{K}$ maps $L^{p}((0,T)\times \Gamma)$ into itself
by construction. For $h_{1},h_{2}\in L^{p}((0,T)\times \Gamma)$ we have that 
$\mathcal{F}(h_1,v_0)-\mathcal{F}(h_2,v_0) = \mathcal{F}(h_1-h_2,0)$, since
this difference is the unique solution of (\ref{1}) with inhomogeneity $%
h_1-h_2$ and trivial initial value. We thus obtain as above that 
\begin{align}
\Vert \mathcal{K}h_{1}-&\, \mathcal{K}h_{2}\Vert _{L^{p}((0,T)\times \Gamma)}
\notag \\
&\,\leq C\sum_{i}\Vert \mathcal{S}_{i}\big( (\psi _{i}(h_1-h_2)+[\mathcal{C}%
,\psi _{i}]\mathcal{F}(h_{1}-h_{2},0))\circ \text{g}_{i},0\big) \Vert
_{L^{p}(0,T;W^{1,p}(\mathbb{R}^{n-1} ))}  \notag \\
& \,\leq (\eta +C_{\eta }T)\sum_{i}\Vert \mathcal{S}_{i}\big( (\psi
_{i}(h_{1}-h_{2})+[\mathcal{C},\psi _{i}]\mathcal{F}(h_{1}-h_{2},0))\circ 
\text{g}_{i},0\big) \Vert _{\mathbb{E}(\Gamma )}  \notag \\
& \,\leq C(\eta +C_{\eta }T)\left( \Vert h_{1}-h_{2}\Vert
_{L^{p}((0,T)\times \Gamma)}+\Vert \lbrack \mathcal{C},\psi _{i}]\mathcal{F}%
(h_{1}-h_{2},0)\Vert _{L^{p}((0,T)\times \Gamma)}\right),  \notag
\end{align}%
where $\eta>0$ is arbitrary. Therefore $\mathcal{K}$ is a strict contraction
for sufficiently small $\eta$ and $T$ if the second summand above satisfies 
\begin{equation}  \label{2230}
\Vert \lbrack \mathcal{C},\psi _{i}]\mathcal{F}(h_{1}-h_{2},0)\Vert
_{L^{p}((0,T)\times \Gamma)} \leq C\, \|h_{1}-h_{2}\|_{L^{p}((0,T)\times
\Gamma)},
\end{equation}
with a constant $C$ independent of $T$. To see this we estimate for $h\in
L^{p}((0,T)\times \Gamma)$ 
\begin{align*}
\Vert \mathcal{F}(h,0)\Vert _{\mathbb{E}(\Gamma)} &\, = \Vert \mathcal{S}%
_{h,0}(\mathcal{F}(h,0))\Vert _{\mathbb{E}(\Gamma)} \\
& \,\leq \Vert \mathcal{S}_{h,0}(\mathcal{F}(h,0))-\mathcal{S}_{h,0}(0)\Vert
_{\mathbb{E}(\Gamma)}+\Vert \mathcal{S}_{h,0}(0)\Vert _{\mathbb{E}(\Gamma)}
\\
& \,\leq (\varepsilon +C_{\varepsilon }T)\Vert \mathcal{F}(h,0)\Vert _{%
\mathbb{E}(\Gamma)}+C\Vert h\Vert _{L^{p}((0,T)\times \Gamma)}
\end{align*}%
for given $\varepsilon>0$ by \eqref{222} and (\ref{4}). In this inequality,
if $\varepsilon$ and $T$ are sufficiently small, then we may absorb $%
(\varepsilon +C_{\varepsilon }T)\Vert \mathcal{F}(h,0)\Vert _{\mathbb{E}%
(\Gamma)}$ into the left-hand side to obtain 
\begin{equation*}
\Vert \mathcal{F}(h,0)\Vert _{\mathbb{E}(\Gamma)}\leq C\Vert
h\Vert_{L^{p}((0,T)\times \Gamma)}.
\end{equation*}
Now \eqref{2230} follows from 
\begin{equation*}
\Vert \lbrack \mathcal{C},\psi _{i}]\mathcal{F}(h_{1}-h_{2},0)\Vert
_{L^{p}((0,T)\times \Gamma)} \leq C\, \Vert \mathcal{F}(h_{1}-h_{2},0)\Vert
_{\mathbb{E}(\Gamma)},
\end{equation*}
which finishes the proof.
\end{proof}

\section{Well-posedness and compactness of the solution semiflows}

\label{wellp}

\subsection{Dirichlet problems}

Let $\Omega \subset \mathbb{R}^{n}$ be a bounded domain with smooth boundary 
$\Gamma $. We assume that the operator $\mathcal{A}$ is given by 
\begin{equation}  \label{opA}
\mathcal{A }u = - \text{div} \big( d \nabla u\big), \quad d\in C^\infty(%
\overline{\Omega},\mathbb{R}), \quad d\geq d_* >0,
\end{equation}
where $d_*$ is a constant. We first consider the linear inhomogeneous
Dirichlet problem 
\begin{equation}
\left\{ 
\begin{array}{ll}
\lambda v + \mathcal{A }v= f & \text{in }\Omega , \\ 
v|_\Gamma = g & \text{on }\Gamma.%
\end{array}%
\right.  \label{Diri}
\end{equation}
We denote $\text{tr}\, u = u|_\Gamma$ for the trace on $\Gamma$. It follows
from classical Agmon-Douglis-Nirenberg theory that there is $\lambda_D\geq 0$
such that for all $\lambda \geq \lambda_D$ it holds that 
\begin{equation*}
(\lambda+\mathcal{A}, \text{tr}): H^{2,p}(\Omega)\to L^p(\Omega)\times
W^{2-1/p,p}(\Gamma)
\end{equation*}
is a continuous isomorphism. For instance, if $\mathcal{A}=-d\Delta $ for a
constant $d>0$, then one can take $\lambda _{D}=0$. The corresponding
inverse, which is denoted by 
\begin{equation*}
\mathcal{R}_\lambda := (\lambda+\mathcal{A}, \text{tr})^{-1},
\end{equation*}
enjoys the following properties.

\begin{lemma}
\label{Diri-lemma} Let $\mathcal{A}$ be given by \eqref{opA} and $p\in
(1,\infty ).$ Then for all $\theta \in (1/2p,1]$, $\theta \neq 1/2 + 1/2p,$
and $\lambda \geq \lambda_D$ the operator $\mathcal{R}_\lambda$ extends to a
continuous map 
\begin{equation*}
\mathcal{R}_\lambda:L^{p}(\Omega )\times W^{2\theta -1/p,p}(\Gamma ) \to
H^{2\theta ,p}(\Omega).
\end{equation*}
There are constants $C_{D}$ (independent of $\lambda $) and $C_{\lambda }$
such that 
\begin{equation*}
\Vert \mathcal{R}_{\lambda }(f,g)\Vert _{H^{2\theta ,p}(\Omega )}\leq
C_{D}\lambda ^{-(1-\theta )}\Vert f\Vert _{L^{p}(\Omega )}+C_{\lambda }\Vert
g \Vert _{W^{2\theta -1/p,p}(\Gamma)}.
\end{equation*}
\end{lemma}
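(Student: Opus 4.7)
The plan is to split the solution by linearity as $v := \mathcal{R}_{\lambda}(f,g) = v_{1}+v_{2}$, where $v_1$ and $v_2$ solve, respectively, the homogeneous-boundary and zero-interior problems
$$\lambda v_{1}+\mathcal{A}v_{1}=f \ \text{ in }\Omega, \quad v_{1}|_{\Gamma}=0, \qquad \lambda v_{2}+\mathcal{A}v_{2}=0 \ \text{ in }\Omega, \quad v_{2}|_{\Gamma}=g.$$
Each subproblem is uniquely solvable in $H^{2,p}(\Omega)$ for $\lambda\geq \lambda_{D}$ by the ADN isomorphism $(\lambda+\mathcal{A},\text{tr})$ recalled just before the lemma, so the decomposition is legitimate. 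The splitting is natural since the two summands carry very different $\lambda$-behavior: only the interior contribution $v_{1}$ must produce the sharp decay $\lambda^{-(1-\theta)}$, whereas $v_{2}$ may absorb arbitrary $\lambda$-dependence into the constant $C_{\lambda}$.

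For $v_{1}$, I would work with the Dirichlet realization $A_{D}$ of $\mathcal{A}$ on $L^{p}(\Omega)$, whose domain is $\{u\in H^{2,p}(\Omega):u|_{\Gamma}=0\}$. Classical Agmon--Douglis--Nirenberg theory shows that $-A_{D}$ generates a bounded analytic semigroup on $L^{p}(\Omega)$, so that, enlarging $\lambda_{D}$ if necessary, one has uniformly in $\lambda\geq \lambda_{D}$
$$\|(\lambda+A_{D})^{-1}\|_{\mathcal{L}(L^{p}(\Omega))}\leq \tfrac{C}{\lambda},\qquad \|A_{D}(\lambda+A_{D})^{-1}\|_{\mathcal{L}(L^{p}(\Omega))}\leq C.$$
Combining the second bound with the a priori estimate $\|u\|_{H^{2,p}}\leq C(\|\mathcal{A}u\|_{L^{p}}+\|u\|_{L^{p}})$ yields $\|v_{1}\|_{L^{p}}\leq C\lambda^{-1}\|f\|_{L^{p}}$ and $\|v_{1}\|_{H^{2,p}}\leq C\|f\|_{L^{p}}$. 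Inserting both into the interpolation inequality \eqref{interp} gives at once the sharp estimate
$$\|v_{1}\|_{H^{2\theta,p}(\Omega)}\leq C\,\|v_{1}\|_{L^{p}}^{1-\theta}\|v_{1}\|_{H^{2,p}}^{\theta}\leq C_{D}\,\lambda^{-(1-\theta)}\|f\|_{L^{p}(\Omega)},$$
with $C_{D}$ independent of $\lambda$. Notably, no characterization of fractional domains of $A_{D}$ enters here, which is fortunate since that characterization is precisely what would degenerate at $\theta=1/2+1/(2p)$.

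For $v_{2}$, with $\lambda\geq\lambda_{D}$ fixed, the case $\theta=1$ is exactly the ADN isomorphism restricted to $\{0\}\times W^{2-1/p,p}(\Gamma)$. For $\theta\in(1/(2p),1)$ with $\theta\neq 1/2+1/(2p)$, I would use a continuous right inverse of the trace, $E:W^{2\theta-1/p,p}(\Gamma)\to H^{2\theta,p}(\Omega)$, which exists precisely under the trace condition $2\theta-1/p\notin\mathbb{N}$; the borderline value $2\theta-1/p=1$ corresponds exactly to the excluded $\theta=1/2+1/(2p)$. Writing $v_{2}=Eg+w$, the residual $w$ has zero boundary trace and solves $(\lambda+\mathcal{A})w=-(\lambda+\mathcal{A})Eg$, and bootstrapping this equation on the appropriate fractional scale (with all $\lambda$-factors folded into $C_{\lambda}$) yields the required continuity $W^{2\theta-1/p,p}(\Gamma)\to H^{2\theta,p}(\Omega)$. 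A cleaner alternative is to quote directly the classical Poisson-regularity statement (e.g. from Amann's monograph or Lions--Magenes) which asserts continuity of $g\mapsto v_{2}$ from $W^{s-1/p,p}(\Gamma)$ to $H^{s,p}(\Omega)$ for every $s>1/p$ with $s-1/p\notin\mathbb{N}$.

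The main obstacle is this last $v_{2}$ estimate at non-integer $\theta$: a genuinely self-contained argument must handle the residual $(\lambda+\mathcal{A})Eg$, which lies only in $H^{2\theta-2,p}(\Omega)$ rather than in $L^{p}(\Omega)$ when $\theta<1$, so the Dirichlet bootstrap must be phrased on a fractional distribution scale and the borderline index $\theta=1/2+1/(2p)$ must be carefully tracked, reflecting the failure of the trace theorem $H^{s,p}(\Omega)\to W^{s-1/p,p}(\Gamma)$ when $s-1/p\in\mathbb{N}$ for $p\neq 2$. Once the $v_{1}$ and $v_{2}$ estimates are in hand, adding them produces the claimed inequality and, in particular, the continuity of the extended map $\mathcal{R}_{\lambda}$.
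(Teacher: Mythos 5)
Your proposal is correct and follows essentially the same route as the paper: the paper also reduces the interior part to the two bounds $\Vert \mathcal{R}_\lambda(f,0)\Vert_{H^{2,p}}\leq C\Vert f\Vert_{L^p}$ and $\Vert \mathcal{R}_\lambda(f,0)\Vert_{L^p}\leq C\lambda^{-1}\Vert f\Vert_{L^p}$ (quoted from Amann) and interpolates, and it handles the boundary part by citing the Lions--Magenes extension of $(\lambda+\mathcal{A},\mathrm{tr})$, exactly your ``cleaner alternative.'' The only difference is presentational: you sketch a trace-lifting bootstrap for $v_2$ that the paper simply outsources to the literature.
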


Here we exclude $\theta =1/2+1/2p$ to avoid the terminology of Besov spaces.

\begin{proof}
The properties of the extension of $\mathcal{R}_\lambda$ are proved in \cite[%
Eq. (12)]{Escher94} and are based on the Lions-Magenes extension of $%
(\lambda+\mathcal{A}, \text{tr})$, see \cite{LM}. Moreover, it is shown in 
\cite[Theorem 12.2]{Amann84} that 
\begin{equation*}
\|\mathcal{R}_\lambda(f,0)\|_{H^{2,p}(\Omega)} \leq C \|f\|_{L^p(\Omega)},
\qquad \|\mathcal{R}_\lambda(f,0)\|_{L^p(\Omega)} \leq C
\lambda^{-1}\|f\|_{L^p(\Omega)},
\end{equation*}
for all $\lambda\geq \lambda_D$. Now the second statement follows from
complex interpolation.
\end{proof}

Let us now consider the nonlinear Dirichlet problem 
\begin{equation}
\left\{ 
\begin{array}{ll}
\lambda u + \mathcal{A }u= F(u) & \text{in }\Omega , \\ 
u|_\Gamma = u_\Gamma & \text{on }\Gamma,%
\end{array}%
\right.  \label{Diri-nonlinear}
\end{equation}
where the boundary data $u_\Gamma$ is given. For $p\in (1,\infty)$ and $%
\theta \in (0,1)$ we assume that 
\begin{equation}
F:H^{2\theta,p}(\Omega )\rightarrow L^{p}(\Omega )\text{ is globally
Lipschitzian with constant $c_F \geq 0$.}  \label{f}
\end{equation}

\begin{example}
Let $F$ be the superposition operator induced by a globally Lipschitzian $f:%
\mathbb{R}\times \mathbb{R}^n \to \mathbb{R}$, i.e., $F(u)(x) =
f(u(x),\nabla u(x))$. Then $F$ satisfies \eqref{f} for all $p$ and $%
\theta\geq \frac{1}{2}$ . If $F$ is induced by a function that is not
globally Lipschitzian, then \eqref{f} cannot hold.
\end{example}

\begin{definition}
\label{sol_ep} Let $p\in (1,\infty)$ and $\theta \in (1/2p,1]$ with $\theta
\neq 1/2 + 1/2p$. For $u_{\Gamma }\in W^{2\theta -1/p,p}(\Gamma ),$ we call $%
u\in H^{2\theta ,p}(\Omega )$ a solution of \eqref{Diri-nonlinear} if 
\begin{equation}
u=\mathcal{R}_{\lambda }(F(u),u_{\Gamma }).  \label{def-solution}
\end{equation}
\end{definition}

If $\theta =1$, then such $u$ is a strong solution, meaning that it
satisfies \eqref{Diri-nonlinear} almost everywhere in $\Omega $ in the sense
of weak derivatives.

\begin{lemma}
\label{Diri-nonlinear-lemma} Let $p\in (1,\infty)$, $\theta \in (1/2p,1)$
with $\theta \neq 1/2 + 1/2p$ and assume \eqref{opA} and \eqref{f}. Then
there is $\lambda_*\geq \lambda_D$ such that for all $\lambda \geq \lambda_*$
the following holds. For all $u_\Gamma \in W^{2\theta-1/p,p}(\Gamma)$ there
is a unique solution $u = \mathcal{D}_\lambda(u_\Gamma)$ of %
\eqref{Diri-nonlinear} in the sense of \eqref{def-solution}, and the
solution operator 
\begin{equation*}
\mathcal{D}_\lambda: W^{2\theta-1/p,p}(\Gamma)\to H^{2\theta,p}(\Omega)
\end{equation*}
is globally Lipschitz continuous. If $F\in C^k(H^{2\theta,p}(\Omega),
L^p(\Omega))$ for $k\in \mathbb{N}\cup\{\infty\}$, then $\mathcal{D}_\lambda
\in C^k(W^{2\theta-1/p,p}(\Gamma), H^{2\theta,p}(\Omega))$.
\end{lemma}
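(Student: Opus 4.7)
The plan is to set up \eqref{def-solution} as a fixed-point problem and to exploit the $\lambda^{-(1-\theta)}$-smallness from Lemma \ref{Diri-lemma}. Concretely, for fixed $u_\Gamma\in W^{2\theta-1/p,p}(\Gamma)$ I would define the map
\begin{equation*}
T_{\lambda,u_\Gamma}:H^{2\theta,p}(\Omega)\to H^{2\theta,p}(\Omega),\qquad T_{\lambda,u_\Gamma}(u):=\mathcal{R}_\lambda(F(u),u_\Gamma),
\end{equation*}
which is well-defined by Lemma \ref{Diri-lemma} together with \eqref{f}. Using the linearity of $\mathcal{R}_\lambda$ in its first argument and the estimate of Lemma \ref{Diri-lemma} with $g=0$, I get
\begin{equation*}
\|T_{\lambda,u_\Gamma}(u)-T_{\lambda,u_\Gamma}(v)\|_{H^{2\theta,p}(\Omega)}\leq C_D\lambda^{-(1-\theta)}\,\|F(u)-F(v)\|_{L^p(\Omega)}\leq C_D\lambda^{-(1-\theta)}c_F\,\|u-v\|_{H^{2\theta,p}(\Omega)}.
\end{equation*}
Since $\theta<1$, choosing $\lambda_*\geq\lambda_D$ so large that $C_D\lambda_*^{-(1-\theta)}c_F\leq 1/2$ makes $T_{\lambda,u_\Gamma}$ a strict contraction for every $\lambda\geq\lambda_*$, and the Banach fixed-point theorem produces the unique solution $u=\mathcal{D}_\lambda(u_\Gamma)$.

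To obtain the global Lipschitz dependence on the boundary datum, I would compare the two fixed points for $u_\Gamma,v_\Gamma\in W^{2\theta-1/p,p}(\Gamma)$ via
\begin{equation*}
\mathcal{D}_\lambda(u_\Gamma)-\mathcal{D}_\lambda(v_\Gamma)=\mathcal{R}_\lambda\bigl(F(\mathcal{D}_\lambda(u_\Gamma))-F(\mathcal{D}_\lambda(v_\Gamma)),\,u_\Gamma-v_\Gamma\bigr),
\end{equation*}
apply Lemma \ref{Diri-lemma} to both arguments, and absorb the resulting $\tfrac12\|\mathcal{D}_\lambda(u_\Gamma)-\mathcal{D}_\lambda(v_\Gamma)\|_{H^{2\theta,p}}$ into the left-hand side, yielding
\begin{equation*}
\|\mathcal{D}_\lambda(u_\Gamma)-\mathcal{D}_\lambda(v_\Gamma)\|_{H^{2\theta,p}(\Omega)}\leq 2C_\lambda\,\|u_\Gamma-v_\Gamma\|_{W^{2\theta-1/p,p}(\Gamma)}.
\end{equation*}

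For the $C^k$-regularity statement I would invoke the implicit function theorem on the map
\begin{equation*}
\Phi:H^{2\theta,p}(\Omega)\times W^{2\theta-1/p,p}(\Gamma)\to H^{2\theta,p}(\Omega),\qquad \Phi(u,u_\Gamma):=u-\mathcal{R}_\lambda(F(u),u_\Gamma).
\end{equation*}
Since $\mathcal{R}_\lambda$ is bounded and linear (hence smooth) and $F\in C^k(H^{2\theta,p}(\Omega),L^p(\Omega))$, the composition gives $\Phi\in C^k$. Its partial derivative in $u$ at any point is $I-\mathcal{R}_\lambda(F'(u)\,\cdot\,,0)$, and because the operator norm of $F'(u):H^{2\theta,p}\to L^p$ is bounded by $c_F$, the same estimate as above shows that $\mathcal{R}_\lambda(F'(u)\,\cdot\,,0)$ has operator norm $\leq 1/2$ for $\lambda\geq\lambda_*$, so $D_u\Phi$ is everywhere invertible as a Neumann-series perturbation of the identity. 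The implicit function theorem then yields $\mathcal{D}_\lambda\in C^k$.

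The only subtle point is making sure, in the $C^k$ step, that the bound on $\|F'(u)\|$ is uniform in $u$ so that $D_u\Phi$ is \emph{globally} an isomorphism rather than only locally; this is where the hypothesis \eqref{f} of \emph{globally} Lipschitz $F$ is essential (a local Lipschitz constant would still give invertibility at each point, hence local $C^k$-smoothness of $\mathcal{D}_\lambda$, but the uniform control above gives a clean global result). The rest is routine bookkeeping of the constants from Lemma \ref{Diri-lemma}.
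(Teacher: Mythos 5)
Your proposal is correct and follows essentially the same route as the paper: a Banach fixed-point argument for the map $u\mapsto\mathcal{R}_\lambda(F(u),u_\Gamma)$ using the $C_D\lambda^{-(1-\theta)}c_F<1$ smallness from Lemma \ref{Diri-lemma}, the same absorption argument for the global Lipschitz bound, and the implicit function theorem applied to $\Phi(u,u_\Gamma)=u-\mathcal{R}_\lambda(F(u),u_\Gamma)$ with $D_u\Phi$ invertible by a Neumann series. Your closing remark about the global Lipschitz hypothesis being what makes $D_u\Phi$ uniformly invertible is also consistent with the paper's choice of $\lambda_*$.
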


\begin{proof}
For $u_\Gamma \in W^{2\theta-1/p,p}(\Gamma)$ the map $u\mapsto \mathcal{R}%
_\lambda(u,u_\Gamma)$ is a strict contraction on $H^{2\theta,p}(\Omega)$ if $%
\lambda_*$ is sufficiently large, since 
\begin{align*}
\|\mathcal{R}_\lambda(u,u_\Gamma) - \mathcal{R}_\lambda(v,u_\Gamma)\|_{H^{2%
\theta,p}(\Omega)} & = \|\mathcal{R}_\lambda (F(u)-F(v),
0)\|_{H^{2\theta,p}(\Omega)} \\
& \leq C_D \lambda_*^{-(1-\theta)} c_F \|u-v \|_{H^{2\theta,p}(\Omega)}
\end{align*}
for $u,v\in H^{2\theta,p}(\Omega)$ by Lemma \ref{Diri-lemma} and \eqref{f}.
The resulting unique fixed point is the unique solution of %
\eqref{def-solution}. The global Lipschitz continuity of the solution
operator $\mathcal{D}_\lambda$ follows from 
\begin{align*}
\|\mathcal{D}_\lambda(u_\Gamma)& - \mathcal{D}_\lambda(v_\Gamma)\|_{H^{2%
\theta,p}(\Omega)} \\
&\, = \|\mathcal{R}_\lambda(F(\mathcal{D}_\lambda(u_\Gamma)) - F(\mathcal{D}%
_\lambda(v_\Gamma)), u_\Gamma - v_\Gamma)\|_{H^{2\theta,p}(\Omega)} \\
&\, \leq C_D \lambda_*^{-(1-\theta)} c_F \|\mathcal{D}_\lambda(u_\Gamma) - 
\mathcal{D}_\lambda(v_\Gamma)\|_{H^{2\theta,p}(\Omega)} + C_\lambda
\|u_\Gamma - v_\Gamma \|_{W^{2\theta-1/p,p}(\Gamma)},
\end{align*}
and from $C_D \lambda_*^{-(1-\theta)} c_F < 1$. Finally, suppose that $F\in
C^k$ and consider the map 
\begin{equation*}
\mathcal{F }: H^{2\theta,p}(\Omega)\times W^{2\theta-1/p,p}(\Gamma) \to
H^{2\theta,p}(\Omega), \quad \mathcal{F}(u,u_\Gamma) = u - \mathcal{R}%
_\lambda(F(u), u_\Gamma).
\end{equation*}
The unique zero of $\mathcal{F}(\cdot, u_\Gamma)$ is $\mathcal{D}%
_\lambda(u_\Gamma)$. We have $\mathcal{F }\in C^k$, and for $h\in
H^{2\theta,p}(\Omega)$ it holds $D_1 \mathcal{F}(u,u_\Gamma) h = h - 
\mathcal{R}_\lambda(F^{\prime}(u) h, 0).$ As above we can estimate 
\begin{equation*}
\|\mathcal{R}_\lambda(F^{\prime}(u) h, 0)\|_{H^{2\theta,p}(\Omega)} \leq C_D
\lambda_*^{-(1-\theta)} c_F \|h\|_{H^{2\theta,p}(\Omega)},
\end{equation*}
which yields that $D_1 \mathcal{F}(u,u_\Gamma)$ is invertible for all $%
(u,u_\Gamma)$ if $\lambda_*$ is large. Thus $\mathcal{D}_\lambda \in C^k$ by
the implicit function theorem.
\end{proof}

\begin{remark}
The number $\lambda_*$ can be chosen such that $\lambda_* > \max\{\lambda_D,
(C_D c_F)^{\frac{1}{1-\theta}}\}$. It tends to infinity as the global
Lipschitz constant of $F$ tends to infinity.
\end{remark}

\subsection{Quasilinear boundary conditions of reactive-diffusive type}

\label{ldsp}

We consider the following class of elliptic problems with quasilinear,
nondegenerate dynamic boundary conditions of reactive-diffusive type, 
\begin{equation}
\left\{ 
\begin{array}{ll}
\lambda u + \mathcal{A }u=F(u) & \text{in }(0,T)\times \Omega, \\ 
\partial _{t}u_{\Gamma }+ \mathcal{C }(u_\Gamma)u_{\Gamma }+ \mathcal{B }%
(u)=G(u_{\Gamma }) & \text{on } (0,T)\times \Gamma , \\ 
u_\Gamma|_{t=0}=u_{0} & \text{on }\Gamma .%
\end{array}%
\right.  \label{ell-dyn}
\end{equation}
This is a generalization of the prototype model \eqref{ell-dyn-intro} from
the introduction.

We assume that $\mathcal{A}$ is as above and that $F$ satisfies \eqref{f}
with $\theta = 1-1/(2p)$. The nonlinear boundary differential operator $%
\mathcal{C}$ is given by 
\begin{equation}  \label{opC}
\mathcal{C }(u_\Gamma)v_\Gamma = - \text{div}_\Gamma \big(%
\delta(\cdot,u_\Gamma) \nabla_\Gamma v_\Gamma \big), \quad \delta \in
C^\infty(\Gamma\times \mathbb{R},\mathbb{R}), \quad \delta \geq \delta_* > 0,
\end{equation}
where $\delta_*$ is a constant. The nonlinear map $\mathcal{B}$ couples the
equations in the domain and on the boundary in a nontrivial way. Given $p\in
(1,\infty)$, we assume that 
\begin{equation}  \label{opB}
\mathcal{B }: H^{2-1/p,p}(\Omega)\to L^p(\Gamma) \text{ is locally
Lipschitzian}.
\end{equation}
We do not impose any further structural condition for $\mathcal{B}$. In
fact, it could vanish identically.

\begin{example}
The prototype for $\mathcal{B}$ is $\mathcal{B}(u) = B\nu \cdot (\nabla
u)|_\Gamma$ for some $B\in C^\infty(\Gamma, \mathbb{R}^{n\times n})$, which
satisfies \eqref{opB} if $p > 2$. For $B = \pm \text{id}$ one obtains $%
\mathcal{B }= \pm \partial_\nu$. In the semilinear case one can also allow $%
p\leq 2$ for such $\mathcal{B}$, see Proposition \ref{semilinear} below.
\end{example}

Next, for the boundary nonlinearity $G$ we assume 
\begin{align}
G:W^{2-2/p,p}(\Gamma )\rightarrow L^{p}(\Gamma )\text{ is locally
Lipschitzian.}  \label{g}
\end{align}

\begin{example}
If $G(u_\Gamma)(x) = g(u_\Gamma(x))$ and $g:\mathbb{R}\to \mathbb{R}$ is
locally Lipschitzian, then $G$ satisfies \eqref{g} if $p > \frac{n+1}{2}$.
If $g $ depends in addition on $\nabla_\Gamma u_\Gamma$, then $p > n+1$ is
required. These assertions are easily verified using Sobolev's embeddings.
If $g$ is polynomial, then the values of $p$ can be lowered.
\end{example}

We are now ready to precisely\ state what we mean by a \emph{strong}
solution to problem (\ref{ell-dyn}).

\begin{definition}
\label{notion_ell_dyn}A function $u$ is said to be a strong solution of %
\eqref{ell-dyn} if it is a strong solution of the elliptic equation almost
everywhere in $(0,T)$ and if its trace $u_{\Gamma }$ is a strong solution of
the parabolic equation on $(0,T)\times \Gamma $.
\end{definition}

We have the following local well-posedness result for \eqref{ell-dyn}.

\begin{theorem}
\label{thm-ell-dyn} Let $p\in (n+1,\infty )$ and assume \eqref{opA}, %
\eqref{f} with $\theta = 1-1/(2p)$, \eqref{opC}, \eqref{opB} and \eqref{g}.
Then there is $\lambda _{\ast }$ such that for all $\lambda \geq \lambda
_{\ast }$ the following holds true. The problem \eqref{ell-dyn} generates a
compact local semiflow of strong solutions on $W^{2-2/p,p}(\Gamma )$. For
all $T\in (0,t^{+}(u_{0}))$ a solution $u=u(\cdot ,u_{0})$ enjoys the
regularity 
\begin{equation*}
u\in C([0,T];H^{2-1/p,p}(\Omega ))\cap L^{p}(0,T;W^{2,p}(\Omega )),
\end{equation*}%
\begin{equation*}
u_{\Gamma }\in W^{1,p}(0,T;L^{p}(\Gamma ))\cap L^{p}(0,T;W^{2,p}(\Gamma )).
\end{equation*}%
%
%
%
%
%
%
%
%
%
%
%
%
%
%
%
\end{theorem}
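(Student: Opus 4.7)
The plan is to decouple the elliptic-parabolic system via the Dirichlet solver of Lemma \ref{Diri-nonlinear-lemma}, and then treat the resulting quasilinear parabolic problem on $\Gamma$ by combining the maximal $L^p$-regularity of Theorem \ref{thm1} with a Banach fixed-point argument (in the spirit of \cite{KPW10}). With $\theta=1-1/(2p)$ one has $2\theta-1/p=2-2/p$, so for $\lambda$ sufficiently large Lemma \ref{Diri-nonlinear-lemma} supplies a globally Lipschitzian (in fact smooth) solution operator
\begin{equation*}
\mathcal{D}_\lambda : W^{2-2/p,p}(\Gamma) \longrightarrow H^{2-1/p,p}(\Omega).
\end{equation*}
Setting $u(t):=\mathcal{D}_\lambda(u_\Gamma(t))$ automatically satisfies the elliptic equation and reduces (\ref{ell-dyn}) to the quasilinear Cauchy problem
\begin{equation*}
\partial_t u_\Gamma + \mathcal{C}(u_\Gamma)u_\Gamma = G(u_\Gamma) - \mathcal{B}(\mathcal{D}_\lambda(u_\Gamma)) =: H(u_\Gamma), \qquad u_\Gamma(0)=u_0,
\end{equation*}
posed on $\mathcal{X}:=W^{2-2/p,p}(\Gamma)$. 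Since $p>n+1$ gives $\mathcal{X}\hookrightarrow C(\Gamma)$, the assumptions (\ref{opB}), (\ref{g}) together with the Lipschitz continuity of $\mathcal{D}_\lambda$ imply that $H:\mathcal{X}\to L^p(\Gamma)$ is locally Lipschitz.

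For the linear analysis, I freeze the top-order coefficient at the initial datum: $\mathcal{C}(u_0)v=-\mathrm{div}_\Gamma(\delta(\cdot,u_0)\nabla_\Gamma v)$ has top coefficient in $BU\!C(\Gamma)$ (since $\delta\in C^\infty$ and $u_0\in C(\Gamma)$) and is uniformly elliptic, so hypotheses (R) and (E) of Theorem \ref{thm1} are met. That theorem supplies a bounded solution map $\mathcal{S}_{u_0}:L^p((0,T)\times\Gamma)\times\mathcal{X}\to\mathbb{E}(\Gamma)$ with norm uniform in $T\in(0,T_0)$. One then defines
\begin{equation*}
\Phi(v):=\mathcal{S}_{u_0}\!\Big(H(v)-[\mathcal{C}(v)-\mathcal{C}(u_0)]v,\;u_0\Big)
\end{equation*}
on the closed ball $B_r\subset\mathbb{E}(\Gamma)$ of functions with $v(0)=u_0$ and $\|v-u_0\|_{C([0,T];\mathcal{X})}\le r$, meaningful via the trace embedding $\mathbb{E}(\Gamma)\hookrightarrow C([0,T];\mathcal{X})$. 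The perturbation $[\mathcal{C}(v)-\mathcal{C}(u_0)]v$ has small $L^p((0,T)\times\Gamma)$-norm for small $r,T$, because $v-u_0$ is small in $C([0,T]\times\Gamma)$ by $\mathcal{X}\hookrightarrow C(\Gamma)$ and $\delta\in C^\infty$; combined with the local Lipschitz estimate for $H$ this makes $\Phi$ a strict contraction. The resulting unique fixed point is the desired local strong solution of (\ref{ell-dyn}); the claimed regularity follows, with $u=\mathcal{D}_\lambda(u_\Gamma)\in L^p(0,T;W^{2,p}(\Omega))$ obtained from the elliptic isomorphism since $u_\Gamma\in L^p(0,T;W^{2,p}(\Gamma))\hookrightarrow L^p(0,T;W^{2-1/p,p}(\Gamma))$.

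Uniqueness and Lipschitz dependence on $u_0\in\mathcal{X}$ follow from the same contraction estimates applied to differences, yielding the semiflow structure via standard continuation: $t^+(u_0)$ is maximal and $\|u_\Gamma(t;u_0)\|_\mathcal{X}\to\infty$ if $t^+(u_0)<\infty$. For compactness, a bounded set $M\subset\mathcal{X}$ with $t^+(M)\ge T$ produces a bounded family of trajectories in $\mathbb{E}(\Gamma)$; an Aubin--Lions/Simon compactness argument, combined with the time-Hölder continuity coming from $\mathbb{E}(\Gamma)\hookrightarrow C^{\,1-1/p}([0,T];L^p(\Gamma))$, then yields relative compactness of $\{u_\Gamma(t;u_0):u_0\in M\}$ in $\mathcal{X}$ for each fixed $t\in(0,T)$. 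The main obstacle throughout is the \emph{quasilinear} dependence of the leading-order operator on the unknown, since Theorem \ref{thm1} applies only to frozen coefficients; the key technical point making the perturbative treatment work is precisely the hypothesis $p>n+1$, which via $\mathcal{X}\hookrightarrow C(\Gamma)$ allows the coefficient difference $\delta(\cdot,v)-\delta(\cdot,u_0)$ to be made uniformly small on short time intervals.
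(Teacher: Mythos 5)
Your reduction of \eqref{ell-dyn} to a quasilinear parabolic problem for $u_\Gamma$ via the Dirichlet solver $\mathcal{D}_\lambda$, and the treatment of that problem by maximal $L^p$-regularity with frozen top-order coefficients plus a contraction argument, is exactly the paper's route for local well-posedness (the paper delegates the fixed-point machinery to the abstract quasilinear results of K\"ohne--Pr\"u{\ss}--Wilke, which is what you are re-deriving). One minor imprecision there: the ball on which you run the contraction is cut out only by the condition $\|v-u_0\|_{C([0,T];\mathcal{X})}\le r$, which does not control $\|\Delta_\Gamma v\|_{L^p((0,T)\times\Gamma)}$; the difference estimate for $[\mathcal{C}(v)-\mathcal{C}(w)]v$ needs a bound on $v$ in $\mathbb{E}(\Gamma)$, so the ball must be taken in the $\mathbb{E}(\Gamma)$-norm around a reference solution, with the sup-norm smallness then recovered (uniformly in $T$) from the vanishing initial trace of differences. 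This is standard and fixable.

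The genuine gap is in the compactness argument. Boundedness of the trajectories in $\mathbb{E}(\Gamma)=W^{1,p}(0,T;L^p(\Gamma))\cap L^p(0,T;W^{2,p}(\Gamma))$ together with Aubin--Lions/Simon gives relative compactness of the set of trajectories only in spaces \emph{below} the trace regularity, e.g.\ in $C([0,T];W^{s,p}(\Gamma))$ for $s<2-2/p$; evaluating at a fixed $t$ then yields relative compactness of $\{u_\Gamma(t;u_0):u_0\in M\}$ only in $W^{s,p}(\Gamma)$ with $s<2-2/p$, not in the phase space $\mathcal{X}=W^{2-2/p,p}(\Gamma)$ itself. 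The trace-space regularity $W^{2-2/p,p}(\Gamma)$ is exactly the borderline: boundedness there is all your estimates give, and boundedness in the phase space is not relative compactness in it. The time-H\"older embedding into $C^{1-1/p}([0,T];L^p(\Gamma))$ does not help close this. The paper's proof overcomes this with a smoothing mechanism you are missing: it works in the \emph{weighted} maximal regularity classes $W^{1,p}_\mu\cap L^p_\mu$ of \cite{KPW10}, for which the solution map is continuous from the \emph{strictly weaker} space $W^{2\mu-2/p,p}(\Gamma)$ (in which a bounded subset of $\mathcal{X}$ is already relatively compact), while the time-$t$ trace for $t>0$ still lands in the \emph{full} regularity space $W^{2-2/p,p}(\Gamma)$ because the weight $t^{p(1-\mu)}$ only acts at $t=0$. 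Composing ``compact embedding into the weaker space'' with ``continuous solution map'' and ``continuous trace back into $\mathcal{X}$'' is what produces compactness of $S(t;M)$ in $\mathcal{X}$; some such gain of regularity at positive times is indispensable, and your argument does not supply it.
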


\begin{remark}
\label{remark-after-thm} \ 

\begin{enumerate}
\item The corresponding result for boundary conditions of purely reactive
type, i.e., $\mathcal{C}\equiv 0$ and $\mathcal{B}=d\partial _{\nu }$, was
shown in \cite{Escher92}. There the result is based on the generation
properties of the Dirichlet-to-Neumann operator and thus requires a good
sign of the normal derivative. Moreover, the solutions enjoy worse
regularity properties up to $t=0$. In presence of the surface diffusion
operator $\mathcal{C}$, local well-posedness becomes essentially independent
of the lower order coupling $\mathcal{B}$. The latter was already observed
in \cite{VV09} for a linear problem in the special case $\mathcal{B}%
=-\partial _{\nu }$.

\item The proof shows that one can take $\lambda _{\ast }=\max \{\lambda
_{D},(C_{D}c_{F})^{p}\}.$ If $F$ is not globally Lipschitzian, then
nonexistence, nonuniqueness and noncontinuation phenomena can occur (see 
\cite{FP99}).
\end{enumerate}
\end{remark}

\begin{proof}[Proof of Theorem \protect\ref{thm-ell-dyn}]
\emph{Step 1.} Let $\mathcal{D}_{\lambda }:W^{2-2/p,p}(\Gamma)\to
H^{2-1/p,p}(\Omega)$ be the solution operator from Lemma \ref%
{Diri-nonlinear-lemma} for the nonlinear Dirichlet problem %
\eqref{Diri-nonlinear}. Given $u_0\in W^{2-2/p,p}(\Gamma)$, we consider the
quasilinear evolution equation 
\begin{equation}
\left\{ 
\begin{array}{ll}
\partial _{t}u_{\Gamma }+\mathcal{C}(u_{\Gamma })u_{\Gamma }=G(u_{\Gamma })-%
\mathcal{B}(\mathcal{D}_{\lambda }(u_{\Gamma })) & \text{on }(0,T)\times
\Gamma , \\ 
u_{\Gamma }|_{t=0}=u_{0} & \text{on }\Gamma .%
\end{array}%
\right.  \label{quasi-KPW}
\end{equation}%
for $u_\Gamma$. We verify the conditions of \cite[Theorems 2.1 and 3.1,
Corollary 3.2]{KPW10} to apply the abstract results on local well-posedness
provided there.

By assumption and the Lipschitz continuity of $\mathcal{D}_\lambda$, the map 
$u_\Gamma \mapsto G(u_\Gamma) - \mathcal{B}(\mathcal{D}_\lambda(u_\Gamma))$
is locally Lipschitzian $W^{2-2/p,p}(\Gamma)\to L^p(\Gamma)$. Next we
rewrite the leading term $\mathcal{C}(u_\Gamma) u_\Gamma$ into 
\begin{equation*}
\mathcal{C}(u_\Gamma) u_\Gamma = - \delta(\cdot, u_\Gamma) \Delta_\Gamma
u_\Gamma - \nabla_\Gamma (\delta(\cdot, u_\Gamma)) \nabla_\Gamma u.
\end{equation*}
For the first term we have 
\begin{equation*}
\|\delta(\cdot, u_\Gamma)\Delta_\Gamma w_\Gamma -
\delta(\cdot,v_\Gamma)\Delta_\Gamma w_\Gamma\|_{L^p(\Gamma)} \leq
\|\delta(\cdot, u_\Gamma) - \delta(\cdot, v_\Gamma)\|_{L^\infty(\Gamma)}
\|w_\Gamma\|_{W^{2,p}(\Gamma)}.
\end{equation*}
The superposition operator induced by $\delta$ is locally Lipschitzian as a
map $C(\Gamma) \to C(\Gamma)$, and since $p > \frac{n+1}{2}$ we have that $%
W^{2-2/p,p}(\Gamma) \hookrightarrow C(\Gamma)$. Therefore $u_\Gamma \mapsto
- \delta(\cdot, u_\Gamma) \Delta_\Gamma $ is locally Lipschitzian as a map $%
W^{2-2/p,p}(\Gamma) \to \mathcal{L}(W^{2,p}(\Gamma), L^p(\Gamma))$. Further,
for each $u_\Gamma \in W^{2-2/p,p}(\Gamma)$ the function $%
-\delta(\cdot,u_\Gamma)$ belongs to $C(\Gamma)$. Hence by Theorem \ref{thm1}%
, the operator $-\delta(\cdot, u_\Gamma) \Delta_\Gamma$ with domain $%
W^{2,p}(\Gamma)$ on $L^p(\Gamma)$ enjoys the property of maximal $L^p$%
-regularity. Finally, if $p> n+1$ then $W^{2-2/p,p}(\Gamma) \hookrightarrow
C^1(\Gamma)$, and this implies that $u_\Gamma \mapsto \nabla_\Gamma
(\delta(\cdot, u_\Gamma)) \nabla_\Gamma u$ is locally Lipschitzian $%
W^{2-2/p,p}(\Gamma) \to L^p(\Gamma)$.

It thus follows from the results of \cite{KPW10} that \eqref{quasi-KPW}
generates a local solution semiflow on $W^{2-2/p,p}(\Gamma )$, such that 
\begin{equation*}
u_{\Gamma }\in W^{1,p}(0,T;L^{p}(\Gamma ))\cap L^{p}(0,T;W^{2,p}(\Gamma
))\cap C([0,T];W^{2-2/p,p}(\Gamma ))
\end{equation*}%
for each $T\in (0,t^{+}(u_{0}))$. Hence $u:=\mathcal{D}_{\lambda }(u_{\Gamma
})$ solves \eqref{ell-dyn}, as described in Definition \ref{notion_ell_dyn},
by the Lemmas \ref{Diri-lemma} and \ref{Diri-nonlinear-lemma}. In this way
the semiflow for \eqref{quasi-KPW} becomes a semiflow for \eqref{ell-dyn}.

\emph{Step 2.} It remains to show the compactness of the semiflow generated
by \eqref{ell-dyn}. To this end we modify the arguments of \cite[Section 3]%
{KPW10} appropriately. We will use the notion and properties of the weighted
spaces $L_{\mu }^{p}$ used in \cite{KPW10}, which are given by 
\begin{equation*}
L_{\mu }^{p}(0,T;E):=\bigg\{ v:(0,T)\rightarrow E\,:\,\text{ }\Vert v\Vert
_{L_{\mu }^{p}(0,T;E)}^{p}:=\int_{0}^{T}t^{p(1-\mu
)}|v(t)|_{E}^{p}\,dt<+\infty \bigg\} ,
\end{equation*}%
for some $p\in (1,\infty )$, $\mu \in (1/p,1],$ and a Banach space $E$ with
norm $|\cdot |_{E}$. The corresponding Sobolev spaces $W_{\mu }^{1,p}(0,T;E)$
are defined by 
\begin{equation*}
W_{\mu }^{1,p}(0,T;E):=\{v\in L_{\mu }^{p}(0,T;E)\,:\, \exists\,v^{\prime
}\in L_{\mu }^{p}(0,T;E)\}.
\end{equation*}

Let us now return to the proof. By assumption there exists a number $\mu \in
(1/p,1)$ with $2\mu -2/p>1+\frac{n-1}{p}$. The same arguments as above show
that $u_{\Gamma }\mapsto -\delta (\cdot ,u_{\Gamma })\Delta _{\Gamma }$ is
locally Lipschitzian $W^{2\mu -2/p,p}(\Gamma )\rightarrow \mathcal{L}%
(W^{2,p}(\Gamma ),L^{p}(\Gamma ))$, and the lower order nonlinearities are
locally Lipschitzian $W^{2\mu -2/p,p}(\Gamma )\rightarrow L^{p}(\Gamma )$.
Thus by \cite[Theorem 2.1]{KPW10}, for each $v_{0}\in W^{2\mu -2/p,p}(\Gamma
)$ there are $r,T>0$ and a continuous map 
\begin{equation*}
\Phi :B_{r}(v_{0})\subset W^{2\mu -2/p,p}(\Gamma )\rightarrow W_{\mu
}^{1,p}(0,T;L^{p}(\Gamma ))\cap L_{\mu }^{p}(0,T;W^{2,p}(\Gamma ))
\end{equation*}%
such that $u_{\Gamma }=\Phi (u_{0})$ solves \eqref{quasi-KPW} on $(0,T)$.

Now let $M$ be a bounded subset of $W^{2-2/p,p}(\Gamma)$ such that $%
t^+(M)\geq T > 0$. Then $M$ is relatively compact in $W^{2\mu-2/p,p}(\Gamma)$%
. Hence finitely many balls $B_{r_i} \subset W^{2\mu-2/p,p}(\Gamma)$ suffice
to cover $M$, with corresponding solution maps $\Phi_i$ and times $T_i$ as
above. Let $T_0 = \min T_i$, and take $0 < t\leq T_0$. Then we have $u(t;M)
= \bigcup_{i} \text{tr}_{t} \Phi_i(B_i \cap M)$. By the continuity of $%
\Phi_i $, the set $\Phi_i(B_i \cap M)$ is relatively compact in $%
W^{1,p}_\mu(0,T_0;L^p(\Gamma)) \cap L^p_\mu(0,T_0; W^{2,p}(\Gamma))$.
Moreover, the trace $\text{tr}_{t}$ at time $t$ is continuous from the
latter space into the higher regularity space $W^{2-2/p,p}(\Gamma)$, due to
the fact that the weight $t^{p(1-\mu)}$ only has an effect at $t=0$ (see 
\cite[Proposition 3.1]{PS04} and \cite[Theorem 4.2]{MS11a}). Thus $u(t;M)$
is relatively compact in $W^{2-2/p,p}(\Gamma)$. Finally, in case $T_0 < t
\leq T $ we obtain the relative compactness of $u(t;M)$ from $u(t;M) =
u(t-T_0; u(T_0;M))$ and the continuity of $u(t-T_0; \cdot)$ on $%
W^{2-2/p,p}(\Gamma)$.
\end{proof}

Things are simpler in the semilinear case.

\begin{proposition}
\label{semilinear} Let $p\in (1,\infty)$ and assume \eqref{opA} and %
\eqref{opC}, where $\delta $ is independent of $u_\Gamma$. Suppose that
there is $\theta\in (1/2p,1)$ such that 
\begin{equation*}
F: H^{2\theta,p}(\Omega) \to L^p(\Omega), \quad G: W^{2\theta-1/p,p}(\Gamma)
\to L^p(\Gamma), \quad \mathcal{B }: H^{2\theta,p}(\Omega) \to L^p(\Gamma),
\end{equation*}
where $F$ is globally Lipschitzian and $G,\mathcal{B}$ are Lipschitzian on
bounded sets. Then there is $\lambda_*$ such that for all $\lambda \geq
\lambda_*$ the following holds. For all $\sigma \in (\theta, 1)$ the problem %
\eqref{ell-dyn} generates a compact local semiflow of strong solutions on $%
W^{2\sigma-1/p,p}(\Gamma)$. A solution $u = u(\cdot;u_0)$ enjoys the
regularity 
\begin{equation*}
u\in C([0,t^+); H^{2 \sigma,p}(\Omega)) \cap C(0,t^+; W^{2,p}(\Omega)),
\end{equation*}
\begin{equation*}
u_\Gamma \in C([0,t^+); W^{2 \sigma- 1/p,p}(\Gamma)) \cap C^1(0,t^+;
L^p(\Gamma)) \cap C(0,t^+; W^{2,p}(\Gamma)).
\end{equation*}
\end{proposition}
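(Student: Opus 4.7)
The plan is to reduce \eqref{ell-dyn} to a single abstract semilinear Cauchy problem on the boundary and then invoke classical theory for semilinear equations driven by an analytic semigroup. Since $F$ is globally Lipschitzian from $H^{2\theta,p}(\Omega)$ to $L^{p}(\Omega)$, Lemma \ref{Diri-nonlinear-lemma} delivers $\lambda_{\ast}\geq\lambda_{D}$ such that for every $\lambda\geq\lambda_{\ast}$ the solution operator $\mathcal{D}_{\lambda}:W^{2\theta-1/p,p}(\Gamma)\to H^{2\theta,p}(\Omega)$ of the nonlinear Dirichlet problem \eqref{Diri-nonlinear} is well defined and globally Lipschitz continuous. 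Substituting $u=\mathcal{D}_{\lambda}(u_{\Gamma})$ into the dynamic boundary condition turns \eqref{ell-dyn} into
\begin{equation*}
\partial_{t}u_{\Gamma}+\mathcal{C}u_{\Gamma}=N(u_{\Gamma}),\qquad u_{\Gamma}|_{t=0}=u_{0},
\end{equation*}
where $N(u_{\Gamma}):=G(u_{\Gamma})-\mathcal{B}(\mathcal{D}_{\lambda}(u_{\Gamma}))$. By composing the assumed local Lipschitz continuity of $G$ and $\mathcal{B}$ with that of $\mathcal{D}_{\lambda}$, the nonlinearity $N$ is Lipschitz on bounded subsets of $W^{2\theta-1/p,p}(\Gamma)$ with values in $L^{p}(\Gamma)$.

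Because $\delta$ is independent of $u_{\Gamma}$, the operator $\mathcal{C}$ is linear, and the autonomous case of Theorem \ref{thm1} shows that $-\mathcal{C}$ with domain $W^{2,p}(\Gamma)$ generates an analytic $C_{0}$-semigroup on $L^{p}(\Gamma)$. The standard real-interpolation identities
\begin{equation*}
\bigl(L^{p}(\Gamma),W^{2,p}(\Gamma)\bigr)_{\sigma-1/(2p),p}=W^{2\sigma-1/p,p}(\Gamma),
\end{equation*}
valid for $\sigma-1/(2p)\in(0,1)$ with $2\sigma-1/p\notin\mathbb{N}$, identify the admissible initial spaces $W^{2\sigma-1/p,p}(\Gamma)$ with genuine intermediate spaces between $L^{p}(\Gamma)$ and the domain of $\mathcal{C}$, and $N$ sends $W^{2\sigma-1/p,p}(\Gamma)$ into $L^{p}(\Gamma)$ locally Lipschitzly whenever $\sigma>\theta$. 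The abstract theory for semilinear parabolic problems in the Henry--Lunardi framework then produces, for every $u_{0}\in W^{2\sigma-1/p,p}(\Gamma)$, a unique maximal strong solution
\begin{equation*}
u_{\Gamma}\in C([0,t^{+});W^{2\sigma-1/p,p}(\Gamma))\cap C^{1}((0,t^{+});L^{p}(\Gamma))\cap C((0,t^{+});W^{2,p}(\Gamma)),
\end{equation*}
together with continuous dependence on initial data and the standard blow-up alternative, which is exactly the boundary part of the asserted regularity and gives the semiflow. The interior regularity is now read off by applying Lemma \ref{Diri-nonlinear-lemma} twice: at level $\theta'=\sigma$, it transports $u_{\Gamma}\in C([0,t^{+});W^{2\sigma-1/p,p}(\Gamma))$ to $u=\mathcal{D}_{\lambda}(u_{\Gamma})\in C([0,t^{+});H^{2\sigma,p}(\Omega))$, and at level $\theta'=1$, together with the continuous embedding $W^{2,p}(\Gamma)\hookrightarrow W^{2-1/p,p}(\Gamma)$, it yields $u\in C((0,t^{+});W^{2,p}(\Omega))$.

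For compactness, let $M\subset W^{2\sigma-1/p,p}(\Gamma)$ be bounded with $t^{+}(M)\geq T>0$. Continuous dependence lets us cover $M$ by finitely many small balls on each of which the solution map is bounded in the maximal-regularity norm, and the analytic smoothing encoded in the variation-of-constants formula for the pair $(\mathcal{C},N)$ yields a uniform bound in $C([\varepsilon,T];W^{2,p}(\Gamma))$ for every $\varepsilon>0$. Since $\sigma<1$, Rellich's theorem on the closed manifold $\Gamma$ provides the compact embedding $W^{2,p}(\Gamma)\Subset W^{2\sigma-1/p,p}(\Gamma)$, so $u(t;M)$ is relatively compact in the phase space for every $t\in(0,T)$. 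The only genuinely delicate point in carrying this plan out is the interpolation bookkeeping on $\Gamma$ to make sure that $N$ hits $L^{p}(\Gamma)$ precisely from the Slobodetskij scale $W^{2\sigma-1/p,p}$ (and to handle the exceptional exponents $\sigma=1/2+1/(2p)$ via Besov-space variants if they arise); once this is organized, the rest is a routine verification of the hypotheses of the classical semilinear theory.
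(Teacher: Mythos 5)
Your proposal is correct and follows essentially the same route as the paper: reduce \eqref{ell-dyn} via the Dirichlet solution operator $\mathcal{D}_\lambda$ of Lemma \ref{Diri-nonlinear-lemma} to the abstract semilinear boundary problem \eqref{quasi-KPW}, use Theorem \ref{thm1} for the analytic semigroup generated by $-\mathcal{C}$, apply the classical semilinear theory on the interpolation scale, and obtain compactness from the compact embedding of $W^{2,p}(\Gamma)$ into the phase space. The paper compresses all of this into a two-line citation of \cite{CD}; your write-up just makes the same skeleton explicit.
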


\begin{proof}
The reformulation \eqref{quasi-KPW} of \eqref{ell-dyn} is now an abstract
semilinear problem. Since $W^{2,p}(\Gamma) \hookrightarrow L^p(\Gamma)$ is
compact, the assertions follow from Theorem \ref{thm1}, the Lemmas \ref%
{Diri-lemma} and \ref{Diri-nonlinear-lemma}, and e.g. \cite[Theorems 2.1.1
and 3.2.1, Corollary 2.3.1]{CD}.
\end{proof}

\subsection{Compactness in the purely reactive case}

\label{cprc}

We complement the results in \cite{Escher92, Escher94} concerning
compactness of the solution semiflow. We consider problems of type 
\begin{equation}
\left\{ 
\begin{array}{ll}
\lambda u - \text{div}(d \nabla u)=f(u) & \text{in } (0,T)\times \Omega , \\ 
\partial _{t}u_{\Gamma }+ d\partial_\nu u = g(u_\Gamma) & \text{on }
(0,T)\times \Gamma , \\ 
u_\Gamma|_{t=0}=u_{0} & \text{on }\Gamma.%
\end{array}%
\right.  \label{ell-dyn-classic-escher}
\end{equation}
Throughout this subsection we assume that 
\begin{equation}  \label{reactive-assu}
d\in C^\infty(\overline{\Omega}), \qquad d\geq d_* > 0, \qquad f,g \in
C^\infty(\mathbb{R}), \qquad |f^{\prime}|\leq c_f.
\end{equation}
The results of \cite[Theorem 6.2]{Escher92} and \cite[Theorem 2]{Escher94}
can be summarized as follows.

\begin{proposition}
\label{escher-results} Assume \eqref{reactive-assu}. Then for $p \in
(n,\infty)$ there is $\lambda_*$ such that for all $\lambda\geq \lambda_*$
the problem \eqref{ell-dyn-classic-escher} generates a local semiflow of
classical solutions on $W^{1-1/p,p}(\Gamma)$. A solution $u$ enjoys the
regularity 
\begin{equation*}
u \in C([0,t^+); W^{1,p}(\Omega)) \cap C^1(0,t^+; C^\infty(\Gamma)) \cap
C(0,t^+; C^\infty(\overline{\Omega})).
\end{equation*}
\end{proposition}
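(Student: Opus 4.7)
The plan is to follow the strategy of Escher \cite{Escher92, Escher94}: reduce \eqref{ell-dyn-classic-escher} to an abstract semilinear parabolic evolution equation on $\Gamma$ alone, driven by a Dirichlet-to-Neumann-type operator, and then apply standard analytic semigroup theory.

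First, I would eliminate the interior variable. Using Lemma \ref{Diri-nonlinear-lemma} with $\theta = 1/2$, for sufficiently large $\lambda_* > \max\{\lambda_D, (C_D c_f)^2\}$ and $\lambda \geq \lambda_*$ one obtains a globally Lipschitzian, smooth solution operator $\mathcal{D}_\lambda : W^{1-1/p,p}(\Gamma) \to H^{1,p}(\Omega)$ for the nonlinear Dirichlet problem $\lambda u - \text{div}(d\nabla u) = f(u)$, $u|_\Gamma = u_\Gamma$. Setting $u = \mathcal{D}_\lambda(u_\Gamma)$ and splitting $\mathcal{D}_\lambda(u_\Gamma) = \mathcal{R}_\lambda(0, u_\Gamma) + \mathcal{R}_\lambda(f(\mathcal{D}_\lambda(u_\Gamma)), 0)$, the boundary condition becomes
\begin{equation*}
\partial_t u_\Gamma + \mathcal{N}_\lambda u_\Gamma = g(u_\Gamma) - d\partial_\nu \mathcal{R}_\lambda\bigl(f(\mathcal{D}_\lambda(u_\Gamma)),0\bigr),
\end{equation*}
where $\mathcal{N}_\lambda u_\Gamma := d\partial_\nu \mathcal{R}_\lambda(0,u_\Gamma)$ is the linear Dirichlet-to-Neumann operator associated with $\lambda + \mathcal{A}$.

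The key ingredient is that $-\mathcal{N}_\lambda$ is (modulo lower order terms) a first-order elliptic pseudo-differential operator on $\Gamma$ which generates an analytic $C_0$-semigroup on $L^p(\Gamma)$ with domain $W^{1,p}(\Gamma)$; this is precisely the content of Hintermann's analysis \cite{Hi} and is used by Escher. Granting this, the phase space $W^{1-1/p,p}(\Gamma)$ is recognized as the real interpolation space $(L^p(\Gamma), W^{1,p}(\Gamma))_{1-1/p,p}$, which is the natural trace space for the semigroup. The right-hand side is locally Lipschitzian from $W^{1-1/p,p}(\Gamma)$ into $L^p(\Gamma)$ (and in fact into a space of positive smoothness, due to the smoothing property of $\mathcal{R}_\lambda(\cdot,0): L^p(\Omega) \to H^{2,p}(\Omega)$ combined with $p > n$, which makes $g$ act well on the trace), so abstract semilinear parabolic theory in the style of \cite{CD} yields a unique local classical solution $u_\Gamma \in C([0,t^+); W^{1-1/p,p}(\Gamma)) \cap C^1((0,t^+); L^p(\Gamma))$ with the usual blow-up alternative, hence a local semiflow.

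For the claimed higher regularity, I would bootstrap using the analytic smoothing effect. For $t > 0$ the solution $u_\Gamma(t)$ lies in the domain $W^{1,p}(\Gamma)$, and $\partial_t u_\Gamma(t) \in W^{s,p}(\Gamma)$ for some $s > 0$; by elliptic regularity for $\mathcal{D}_\lambda$ this lifts to interior regularity of $u$, whence the right-hand side of the boundary equation gains smoothness. Iterating this argument, exploiting $d,f,g \in C^\infty$ together with elliptic regularity and Sobolev embedding (here $p > n$ is essential so that intermediate spaces embed into $C^k$-spaces), one obtains $u_\Gamma \in C^1((0,t^+); C^\infty(\Gamma))$ and consequently $u \in C((0,t^+); C^\infty(\overline{\Omega}))$. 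The main obstacle in this whole argument is the analytic-semigroup generation property of $\mathcal{N}_\lambda$ and the identification of its interpolation scale with $W^{s,p}(\Gamma)$; everything else is a routine, though careful, application of the abstract theory.
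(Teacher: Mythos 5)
The paper does not prove this proposition at all: it is stated explicitly as a summary of \cite[Theorem 6.2]{Escher92} and \cite[Theorem 2]{Escher94}, with no argument given. Your reconstruction follows exactly the route of those references and of the machinery the paper itself assembles elsewhere --- the reduction via $\mathcal{D}_\lambda$ (Lemma \ref{Diri-nonlinear-lemma} with $\theta=1/2$) to the boundary evolution equation \eqref{abs-semi-N}, the generation property of $-\mathcal{N}_\lambda$ (Proposition \ref{D-N-lemma}, which indeed is the one nontrivial input and relies on $d\geq d_*>0$ for the correct sign), the identification of $W^{1-1/p,p}(\Gamma)$ as the trace/interpolation space with $p>n$ ensuring the embedding into $C(\Gamma)$, and a bootstrap for smoothness --- so it is essentially the same approach and is correct as a sketch.
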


For sufficiently large $\lambda$ we define the Dirichlet-Neumann operator $%
\mathcal{N}_\lambda $ by 
\begin{equation*}
\mathcal{N}_\lambda u_\Gamma := d\partial_\nu \mathcal{R}_\lambda(0,u_%
\Gamma),
\end{equation*}
where $\mathcal{R}_\lambda$ is from Lemma \ref{Diri-lemma}. The following
generator properties of $\mathcal{N}_\lambda$ are shown in \cite[Theorem 1.5]%
{Escher92} (see also \cite[Theorem 3]{Escher94} and \cite[Section 6]{ES08}).

\begin{proposition}
\label{D-N-lemma} Let $d\in C^\infty(\overline{\Omega})$ with $d\geq d_* > 0$%
, and let $\lambda$ be sufficiently large. Then for all $p\in (1,\infty)$
and $\theta \geq 0$ the operator $-\mathcal{N}_\lambda$ with domain $%
W^{\theta+1,p}(\Gamma)$ generates an analytic $C_0$-semigroup on $%
W^{\theta,p}(\Gamma)$.
\end{proposition}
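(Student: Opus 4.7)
The plan is to verify sectoriality of $\mathcal{N}_\lambda$ on $W^{\theta,p}(\Gamma)$ by rewriting the resolvent problem for $-\mathcal{N}_\lambda$ as a parameter-dependent Robin boundary value problem on $\Omega$ and invoking Agmon–Douglis–Nirenberg type estimates with a parameter, following the strategy in \cite{Escher92, Escher94, ES08}.

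First I would check that $\mathcal{N}_\lambda$ is densely defined and closed on $W^{\theta,p}(\Gamma)$ with domain $W^{\theta+1,p}(\Gamma)$. Lifting Lemma \ref{Diri-lemma} to higher smoothness via the classical Dirichlet estimates, one obtains that $\mathcal{R}_\lambda(0,\cdot)$ maps $W^{\theta+2-1/p,p}(\Gamma)$ continuously into $H^{\theta+2,p}(\Omega)$ for $\lambda \geq \lambda_D$; composing with the normal-derivative trace $d\partial_\nu$ (which drops the order by one) yields continuity $\mathcal{N}_\lambda : W^{\theta+1,p}(\Gamma)\to W^{\theta,p}(\Gamma)$. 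Density of $W^{\theta+1,p}(\Gamma)$ in $W^{\theta,p}(\Gamma)$ is standard.

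Next, for sectoriality, I would pick a sector $\Sigma_{\pi-\varepsilon}=\{\mu\in\mathbb{C}\setminus\{0\}:|\arg\mu|<\pi-\varepsilon\}$ and consider, for $g_\Gamma\in W^{\theta,p}(\Gamma)$, the equation $(\mu+\mathcal{N}_\lambda)u_\Gamma=g_\Gamma$, which is equivalent to the Robin problem
\begin{equation*}
\lambda u - \text{div}(d\nabla u) = 0 \text{ in }\Omega, \qquad \mu\,u|_\Gamma + d\,\partial_\nu u = g_\Gamma \text{ on }\Gamma,
\end{equation*}
with $u_\Gamma=u|_\Gamma$. In local coordinates flattening $\Gamma$ near a boundary point, the Lopatinskii–Shapiro condition for this boundary value problem with spectral parameter $\mu$ in the boundary operator reduces to the scalar symbolic inequality $\mu+d(x)|\xi'|\neq 0$ for $(\xi',\mu)\in(\mathbb{R}^{n-1}\times\overline{\Sigma_{\pi-\varepsilon}})\setminus\{0\}$, which is immediate because $d\geq d_*>0$. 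Parameter-dependent ADN theory then yields, for $|\mu|$ sufficiently large in the sector, unique solvability together with the estimate
\begin{equation*}
|\mu|\,\|u_\Gamma\|_{W^{\theta,p}(\Gamma)} + \|u_\Gamma\|_{W^{\theta+1,p}(\Gamma)} \leq C\,\|g_\Gamma\|_{W^{\theta,p}(\Gamma)},
\end{equation*}
which is precisely the sectoriality of $-\mathcal{N}_\lambda$ on $W^{\theta,p}(\Gamma)$. The analytic $C_0$-semigroup property then follows, analyticity from sectoriality and strong continuity from the density of the domain.

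The main obstacle I would expect is the technical bookkeeping of parameter-elliptic ADN estimates in the $W^{\theta,p}$-scale for noninteger $\theta$: for $\theta\in\mathbb{N}_0$ these are classical, and the remaining cases are obtained by real interpolation between consecutive integer smoothness levels, exactly as carried out in \cite{Escher92, Escher94}. A mild secondary point is ensuring that the lifting of Lemma \ref{Diri-lemma} to $H^{\theta+2,p}$-regularity holds uniformly in $\lambda\geq\lambda_D$, which is standard but must be tracked to exhibit the threshold on $\lambda$.
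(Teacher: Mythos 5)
Your proposal is correct and takes essentially the same route as the paper, which does not prove Proposition \ref{D-N-lemma} itself but cites \cite[Theorem 1.5]{Escher92} (see also \cite{Escher94, ES08}), where the argument is exactly your reduction of the resolvent equation to a Robin problem with the spectral parameter in the boundary condition, verification of the Lopatinskii--Shapiro/Agmon condition via the symbol $\mu + d(x)|\xi'|\neq 0$ on the sector, and interpolation to noninteger $\theta$. The resolvent estimate you state is the correct one for this first-order operator, so there is nothing substantive to add.
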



Now we prove the compactness of the semiflow generated by %
\eqref{ell-dyn-classic-escher}.

\begin{proposition}
\label{compact-reactive} For each $p\in (n,\infty)$, the local solution
semiflow from Proposition \ref{escher-results} is compact.
\end{proposition}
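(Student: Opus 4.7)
The strategy is to reformulate the dynamic boundary condition in \eqref{ell-dyn-classic-escher} as an abstract semilinear evolution equation on $\Gamma$ whose linear part is a compact analytic semigroup on $W^{1-1/p,p}(\Gamma)$, and then invoke standard compactness results for such semilinear semigroups.

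For sufficiently large $\lambda$, Lemma \ref{Diri-nonlinear-lemma} applied with $\theta = 1/2$ yields a globally Lipschitzian solution operator $\mathcal{D}_\lambda : W^{1-1/p,p}(\Gamma) \to H^{1,p}(\Omega)$ for the nonlinear Dirichlet problem $\lambda u + \mathcal{A}u = f(u)$, $u|_\Gamma = u_\Gamma$. Using the decomposition $\mathcal{D}_\lambda(u_\Gamma) = \mathcal{R}_\lambda(f(\mathcal{D}_\lambda(u_\Gamma)),0) + \mathcal{R}_\lambda(0,u_\Gamma)$ and taking $d\partial_\nu$ on $\Gamma$, the dynamic boundary condition rewrites as
\begin{equation*}
\partial_t u_\Gamma + \mathcal{N}_\lambda u_\Gamma = \Phi(u_\Gamma), \qquad \Phi(u_\Gamma) := g(u_\Gamma) - d\partial_\nu \mathcal{R}_\lambda\bigl(f(\mathcal{D}_\lambda(u_\Gamma)),0\bigr).
\end{equation*}

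I would then verify that $\Phi$ is a locally Lipschitzian self-map of $W^{1-1/p,p}(\Gamma)$. For the coupling term, chain together: $\mathcal{D}_\lambda : W^{1-1/p,p}(\Gamma)\to W^{1,p}(\Omega)$ (Lipschitz), the globally Lipschitzian Nemytskii operator $f:W^{1,p}(\Omega)\to L^p(\Omega)$, the map $\mathcal{R}_\lambda(\cdot,0): L^p(\Omega)\to W^{2,p}(\Omega)$ from Lemma \ref{Diri-lemma} with $\theta = 1$, and the continuous normal trace $W^{2,p}(\Omega)\to W^{1-1/p,p}(\Gamma)$. For the term $g(u_\Gamma)$, the hypothesis $p > n$ yields the embedding $W^{1-1/p,p}(\Gamma)\hookrightarrow L^\infty(\Gamma)$, and hence standard Nemytskii results for fractional Sobolev spaces give that the smooth superposition operator induced by $g$ is locally Lipschitzian on $W^{1-1/p,p}(\Gamma)$.

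By Proposition \ref{D-N-lemma}, $-\mathcal{N}_\lambda$ generates an analytic $C_0$-semigroup on $W^{1-1/p,p}(\Gamma)$ with domain $W^{2-1/p,p}(\Gamma)$. Since $\Gamma$ is a compact smooth manifold, the embedding $W^{2-1/p,p}(\Gamma)\hookrightarrow W^{1-1/p,p}(\Gamma)$ is compact, so $\mathcal{N}_\lambda$ has compact resolvent and $e^{-t\mathcal{N}_\lambda}$ is a compact operator for every $t>0$. The reformulated problem thus falls into the classical framework of abstract semilinear parabolic equations with locally Lipschitz nonlinearity and compact linear semigroup, which is well known to generate a compact local semiflow on $W^{1-1/p,p}(\Gamma)$; see for instance \cite[Theorems 2.1.1, 2.3.1 and Corollary 2.3.1]{CD}. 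By uniqueness, this semiflow coincides with the one from Proposition \ref{escher-results}.

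The main technical point is the verification that $\Phi$ is a locally Lipschitzian self-map of $W^{1-1/p,p}(\Gamma)$; once this is in hand, compactness follows directly from compactness of $e^{-t\mathcal{N}_\lambda}$ applied to the variation of parameters formula $u_\Gamma(t)=e^{-t\mathcal{N}_\lambda}u_0+\int_0^t e^{-(t-s)\mathcal{N}_\lambda}\Phi(u_\Gamma(s))\,ds$, after uniform boundedness of orbits from a bounded set $M$ with $t^+(M)\geq T$ is obtained via the continuity of the semiflow from Proposition \ref{escher-results} combined with the semigroup identity $u(t;u_0)=u(t-\tau;u(\tau;u_0))$ and a standard continuation argument.
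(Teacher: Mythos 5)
Your argument is correct, but it takes a genuinely different route from the paper's. Both proofs start from the same reformulation \eqref{abs-semi-N} of the problem as an abstract semilinear equation for $u_\Gamma$ driven by $-\mathcal{N}_\lambda$. The paper then works on the base space $L^p(\Gamma)$: it checks that the right-hand side is Lipschitz on bounded sets from a fractional power space $D(\mathcal{N}_\lambda^\alpha)$, with $\alpha$ close to $1-1/p$, into $L^p(\Gamma)$, invokes \cite[Proposition 3.2.1]{CD} to conclude that $u_\Gamma(t;M)$ is bounded in $D(\mathcal{N}_\lambda^{\alpha'})$ for every $\alpha'\in(0,1)$ at positive times, and gets relative compactness in $W^{1-1/p,p}(\Gamma)$ from the compact embedding of $D(\mathcal{N}_\lambda^{\alpha'})$ for $\alpha'>1-1/p$. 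You instead work directly on the phase space $W^{1-1/p,p}(\Gamma)$, showing the nonlinearity is a locally Lipschitz self-map of that space and exploiting that $e^{-t\mathcal{N}_\lambda}$ is a compact analytic semigroup there; this is precisely the scheme the paper itself uses for the reactive-diffusive semilinear case in Proposition \ref{semilinear}, with the same citations to \cite{CD}. Your route avoids fractional power spaces but pays for it by needing the stronger mapping property that the superposition operator of $g$ is locally Lipschitz from $W^{1-1/p,p}(\Gamma)$ into itself --- a true but nontrivial Nemytskii fact on fractional Sobolev spaces (it does hold here, since $p>n$ gives the embedding into $L^\infty(\Gamma)$ and $g$ is smooth), whereas the paper only needs the trivial estimate into $L^p(\Gamma)$; conversely, the paper's route yields the slightly stronger conclusion that bounded sets are instantaneously regularized into $D(\mathcal{N}_\lambda^{\alpha'})$ for all $\alpha'<1$. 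One caveat on your closing paragraph: continuity of $S(t;\cdot)$ on small balls does not by itself give boundedness of the orbit of a merely bounded, non-compact set $M$ in infinite dimensions, so the "standard continuation argument" you sketch is not a proof of that step; the uniform boundedness of orbits emanating from $M$ is part of what the cited compactness results of \cite{CD} package, and you should rely on them (as the paper does) for that point.
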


\begin{proof}
Using the solution operator $\mathcal{D}_\lambda$ from Lemma \ref%
{Diri-nonlinear-lemma} for the nonlinear Dirichlet problem %
\eqref{Diri-nonlinear}, the regularity of the solutions allows rewrite %
\eqref{ell-dyn-classic-escher} into 
\begin{equation}
\left\{ 
\begin{array}{ll}
\label{abs-semi-N} \partial_t u_\Gamma + \mathcal{N}_\lambda u_\Gamma =
g(u_\Gamma) - d\partial_\nu \mathcal{R}_\lambda(f(\mathcal{D}%
_\lambda(u_\Gamma)),0) & \text{on } (0,T)\times \Gamma , \\ 
u_\Gamma |_{t=0}=u_{0} & \text{on }\Gamma,%
\end{array}%
\right.
\end{equation}
which is a semilinear problem for $u_\Gamma$. Let $M\subset
W^{1-1/p,p}(\Gamma)$ be bounded with $t^+(M)\geq T > 0$. Fix $t\in (0,T)$.
Note that $D(\mathcal{N}_\lambda^{\alpha_2}) \hookrightarrow W^{s,p}(\Gamma)
\hookrightarrow D(\mathcal{N}_\lambda^{\alpha_1})$ for $\alpha_2 > s>
\alpha_1\geq 0$. If $\alpha$ is sufficiently close to $1-1/p$, then
Sobolev's embedding implies that $u_\Gamma \mapsto g(u_\Gamma)$ is
Lipschitzian on bounded sets as a map $D(\mathcal{N}_\lambda^\alpha) \to
L^p(\Gamma)$. Using the Lemmas \ref{Diri-lemma} and \ref%
{Diri-nonlinear-lemma} and the Lipschitz properties of $f$ and $\mathcal{D}%
_\lambda$, for $u_\Gamma, v_\Gamma \in D(\mathcal{N}_\lambda^\alpha)$ and $%
\eta \in (0,\alpha)$ we estimate 
\begin{align*}
\|d\partial_\nu \mathcal{R}_\lambda(f(\mathcal{D}_\lambda(u_\Gamma)) - f(%
\mathcal{D}_\lambda(v_\Gamma)),0)\|_{L^p(\Gamma)}&\, \leq C\|f(\mathcal{D}%
_\lambda(u_\Gamma)) - f(\mathcal{D}_\lambda(v_\Gamma))\|_{L^p(\Omega)} \\
&\, \leq C \|u_\Gamma - v_\Gamma \|_{W^{\eta,p}(\Gamma)} \\
&\, \leq C \|u_\Gamma - v_\Gamma \|_{D(\mathcal{N}_\lambda^\alpha)}.
\end{align*}
Hence $u_\Gamma \mapsto d\partial_\nu \mathcal{R}_\lambda(f(\mathcal{D}%
_\lambda(u_\Gamma)),0)$ is globally Lipschitzian as a map $D(\mathcal{N}%
_\lambda^\alpha)\to L^p(\Gamma)$. Therefore \cite[Proposition 3.2.1]{CD}
applies to \eqref{abs-semi-N}, and we obtain that $u_\Gamma(t;M)$ is bounded
in $D(\mathcal{N}_\lambda^\alpha)$ for all $\alpha\in (0,1)$. Since $%
W^{1,p}(\Gamma) \hookrightarrow L^p(\Gamma)$ is compact, we conclude that $%
u_\Gamma(t;M)$ is relatively compact in $W^{1-1/p,p}(\Gamma)$.
\end{proof}

\section{Qualitative properties of classical solutions}

\label{qp}

In this section we study the qualitative properties of solutions of the
semilinear problem 
\begin{equation}
\left\{ 
\begin{array}{ll}
\lambda u-\text{div}(d\nabla u)=f(u) & \text{in }(0,T)\times \Omega , \\ 
\partial _{t}u_{\Gamma }-\text{div}_{\Gamma }(\delta \nabla _{\Gamma
}u_{\Gamma })+d\partial _{\nu }u=g(u_{\Gamma }) & \text{on }(0,T)\times
\Gamma , \\ 
u_\Gamma|_{t=0}=u_{0} & \text{on }\Gamma ,%
\end{array}%
\right.  \label{ell-dyn-classic}
\end{equation}%
where we assume throughout that $\lambda \geq \lambda _{\ast }$ is
sufficiently large (in dependence on the other parameters). We treat the two
types of boundary conditions simultaneously and assume that 
\begin{equation}
\left\{ 
\begin{array}{c}
d\in C^{\infty }(\overline{\Omega }),\quad d\geq d_{\ast }>0,\quad f,g\in
C^{\infty }(\mathbb{R}),\quad |f^{\prime }|\leq c_{f},\quad p\in (n,\infty),
\\ 
\delta \in C^{\infty }(\Gamma ),\quad \text{and either }\delta \geq \delta
_{\ast }>0\text{ or }\delta \equiv 0.%
\end{array}%
\right.  \label{classical-assum}
\end{equation}%
The local well-posedness of \eqref{ell-dyn-classic} is provided by the
Propositions \ref{semilinear} and \ref{escher-results}. To simplify the
notation we set 
\begin{equation*}
\mathcal{X}_{\delta }:=W^{2-2/p,p}(\Gamma )\quad \text{if }\delta \geq
\delta _{\ast },\qquad \mathcal{X}_{\delta }:=W^{1-1/p,p}(\Gamma )\quad 
\text{if }\delta \equiv 0,
\end{equation*}%
for the corresponding phase spaces. We will make essential use of the fact
that by the Propositions \ref{semilinear} and \ref{escher-results}, for both
types of boundary conditions the trace $u_\Gamma$ of a strong resp.
classical solution of \eqref{ell-dyn-classic} satisfies 
\begin{equation}
\left\{ 
\begin{array}{ll}
\label{abs-semi} \partial_t u_\Gamma + \mathcal{C }u_\Gamma + \mathcal{N}%
_\lambda u_\Gamma = g(u_\Gamma) - d\partial_\nu \mathcal{R}_\lambda(f(%
\mathcal{D}_\lambda(u_\Gamma)),0) & \text{on } (0,T)\times \Gamma , \\ 
u_\Gamma |_{t=0}=u_{0} & \text{on }\Gamma,%
\end{array}%
\right.
\end{equation}
where $\mathcal{C }u_\Gamma = - \text{div}_\Gamma (\delta \nabla_\Gamma
u_\Gamma)$, $\mathcal{N}_\lambda$ is the Dirichlet-Neumann operator, $%
\mathcal{R}_\lambda$ is from Lemma \ref{Diri-lemma} and $\mathcal{D}_\lambda$
is from Lemma \ref{Diri-nonlinear-lemma}.

By Theorem \ref{thm1} and Proposition \ref{D-N-lemma}, the operators $%
\mathcal{C}$ and $\mathcal{N}_\lambda$ are both the negative generators of
an analytic $C_0$-semigroup on $L^p(\Gamma)$. Therefore we may represent $%
u_\Gamma$ by the variation of constants formula with an inhomogeneity as
above.

\subsection{Classical solutions}

\label{classical-solutions}

We show the smoothness of solutions in space and time. Besides its own
interest, this will become important to apply the comparison result Lemma %
\ref{comparison} below and to show that \eqref{ell-dyn-classic} is of
gradient structure (see Section \ref{attractors}).

The key to smoothness in time is the following.

\begin{lemma}
\label{time-reg-lem} Suppose that \eqref{classical-assum} holds, and that $%
\varphi \in C^\infty (0,T; W^{1-1/p,p}(\Gamma))$. For each $t\in (0,T)$,
denote by $u= u(t,\cdot)$ the unique solution of 
\begin{equation}
\left\{ 
\begin{array}{ll}
\lambda u - \emph{\text{div}}(d \nabla u)= f(u) & \emph{\text{in }}\Omega ,
\\ 
u|_\Gamma = \varphi(t) & \emph{\text{on }}\Gamma,%
\end{array}%
\right.  \label{Diri-sm}
\end{equation}
i.e., $u = \mathcal{R}_\lambda(f(u), \varphi(t))$ for $t\in (0,T)$. Then $%
u\in C^\infty (0,T; H^{1,p}(\Omega))$.
\end{lemma}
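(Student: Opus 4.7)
The strategy is to interpret $u(t,\cdot)$ as the image of $\varphi(t)$ under the nonlinear Dirichlet solution operator $\mathcal{D}_\lambda$ from Lemma~\ref{Diri-nonlinear-lemma}, and then to promote the smoothness in $t$ of $\varphi$ through the smoothness of $\mathcal{D}_\lambda$. The level of regularity $H^{1,p}(\Omega)$ in the conclusion corresponds exactly to choosing $\theta=1/2$ in Lemma~\ref{Diri-nonlinear-lemma}, so that the boundary trace lies in $W^{1-1/p,p}(\Gamma)$, which matches the hypothesis on $\varphi$.

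First, I would check that the superposition operator $F:H^{1,p}(\Omega)\to L^{p}(\Omega)$, $F(u)(x)=f(u(x))$, satisfies both the global Lipschitz hypothesis \eqref{f} and is of class $C^{\infty}$. The global Lipschitz bound is immediate from $|f'|\leq c_{f}$ together with the embedding $H^{1,p}(\Omega)\hookrightarrow L^{p}(\Omega)$. For $C^{\infty}$-smoothness, the point is that $p>n$ gives $H^{1,p}(\Omega)\hookrightarrow C(\overline{\Omega})$, and the superposition operator induced by a $C^{\infty}$ scalar function is well known to be $C^{\infty}$ as a map $C(\overline{\Omega})\to C(\overline{\Omega})$; composing with $C(\overline{\Omega})\hookrightarrow L^{p}(\Omega)$ finishes this step.

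Next, I would invoke Lemma~\ref{Diri-nonlinear-lemma} with $\theta=1/2$ (which is allowed since $p>n\geq 1$, so $1/2\in(1/2p,1)$ and $1/2\neq 1/2+1/2p$) and with $\lambda\geq\lambda_{\ast}$ sufficiently large. The lemma then yields a $C^{\infty}$ solution operator
\[
\mathcal{D}_{\lambda}:W^{1-1/p,p}(\Gamma)\longrightarrow H^{1,p}(\Omega),
\]
such that $w=\mathcal{D}_{\lambda}(\psi)$ is the unique solution of \eqref{Diri-sm} with boundary data $\psi$ in the sense of \eqref{def-solution}. In particular, for each $t\in(0,T)$ the given $u(t,\cdot)$ satisfies $u(t,\cdot)=\mathcal{D}_{\lambda}(\varphi(t))$ by uniqueness.

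Finally, the conclusion follows by composition: since $\varphi\in C^{\infty}(0,T;W^{1-1/p,p}(\Gamma))$ and $\mathcal{D}_{\lambda}\in C^{\infty}(W^{1-1/p,p}(\Gamma),H^{1,p}(\Omega))$, the map $t\mapsto\mathcal{D}_{\lambda}(\varphi(t))$ is smooth from $(0,T)$ into $H^{1,p}(\Omega)$, with derivatives computed via the chain rule (the first one being $\partial_{t}u(t)=\mathcal{D}_{\lambda}'(\varphi(t))[\varphi'(t)]$). The main subtlety to watch is the verification that $F$ is genuinely $C^{\infty}$ on $H^{1,p}$, which is the reason for hard-wiring the condition $p>n$ into \eqref{classical-assum}; once this is in place, everything else is a direct application of Lemma~\ref{Diri-nonlinear-lemma} and the chain rule in Banach spaces.
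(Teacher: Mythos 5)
Your proof is correct and follows essentially the same route as the paper: both arguments rest on the implicit function theorem applied to $v\mapsto v-\mathcal{R}_\lambda(f(v),\cdot)$, with invertibility of the linearization coming from $C_D\lambda_*^{-(1-\theta)}c_f<1$, together with the $C^\infty$-smoothness of the superposition operator on $H^{1,p}(\Omega)\hookrightarrow C(\overline{\Omega})$ for $p>n$. The only (cosmetic) difference is that you factor the argument through the final assertion of Lemma~\ref{Diri-nonlinear-lemma} with $\theta=1/2$ and then compose $\mathcal{D}_\lambda\circ\varphi$ via the chain rule, whereas the paper re-runs the implicit function theorem directly on $\mathcal{F}(t,v)=v-\mathcal{R}_\lambda(f(v),\varphi(t))$ with $t$ as the parameter; your packaging is, if anything, slightly more modular.
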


\begin{proof}
Define $\mathcal{F}:(0,T)\times H^{1,p}(\Omega )\rightarrow H^{1,p}(\Omega )$
by 
\begin{equation*}
\mathcal{F}(t,v):=v-\mathcal{R}_{\lambda }\big (f(v),\varphi (t)\big).
\end{equation*}%
By Lemma \ref{Diri-nonlinear-lemma}, for each $t$ the unique zero of $%
\mathcal{F}$ is $u(t,\cdot )$. The assumption on $p$ guarantees that the
superposition operator $v\mapsto f(v)$ belongs to $C^{\infty
}(H^{1,p}(\Omega ),L^{p}(\Omega ))$, with derivative $h\mapsto f^{\prime
}(v)h$. The regularity of $\varphi $ and the continuity of $\mathcal{R}%
_{\lambda }$ thus show that $\mathcal{F}\in C^{\infty }$. At $v\in
H^{1,p}(\Omega )$ the derivative $D_{2}\mathcal{F}(t,v)$ is given by $%
h\mapsto h-\mathcal{R}_{\lambda }(f^{\prime }(v)h,0),$ and by Lemma \ref%
{Diri-lemma} it holds 
\begin{equation*}
\Vert \mathcal{R}_{\lambda }(f^{\prime }(v)h,0)\Vert _{H^{1,p}(\Omega )}\leq
C_{D}\lambda _{\ast }^{-1/2}c_{f}\Vert h\Vert _{H^{1,p}(\Omega )}.
\end{equation*}%
Therefore $D_{2}\mathcal{F}(t,v)$ is invertible for all $t$ and all $v$. We
obtain that for every $t_{0}\in (0,T)$ there are $\varepsilon >0$ and a
function $\Phi \in C^{\infty }\big (t_{0}-\varepsilon ,t_{0}+\varepsilon
;H^{1,p}(\Omega )\big)$ such that $\mathcal{F}(t,\Phi(t)) = 0$. Uniqueness
implies that $\Phi (t)=u(t,\cdot )$ for all $t\in (t_{0}-\varepsilon
,t_{0}+\varepsilon )$. Hence $u\in C^{\infty }\big(0,T;H^{1,p}(\Omega )\big)$
as asserted.
\end{proof}

After this preparation we can show the smoothness of solutions.

\begin{proposition}
\label{classic-diff} Let \eqref{classical-assum} hold. Then for all $u_0\in 
\mathcal{X}_\delta$ the solution $u$ of \eqref{ell-dyn-classic} satisfies $%
u\in C^\infty ((0,t^+) \times \overline{\Omega}).$
\end{proposition}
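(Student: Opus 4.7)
The goal is to establish $u \in C^{\infty}((0,t^+)\times \overline{\Omega})$, which via the Fréchet-space identification $C^{\infty}(0,t^+; C^{\infty}(\overline{\Omega})) = C^{\infty}((0,t^+)\times \overline{\Omega})$ reduces to showing joint smoothness in both variables. The plan is to first establish smoothness of the trace $u_\Gamma$ on $\Gamma$, in both time and space, and then lift this to $u$ using the nonlinear Dirichlet solution operator $\mathcal{D}_\lambda$ from Lemma \ref{Diri-nonlinear-lemma} together with Lemma \ref{time-reg-lem}.

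The cornerstone is the abstract semilinear formulation \eqref{abs-semi}, which reads
\[
\partial_t u_\Gamma + A u_\Gamma = H(u_\Gamma), \qquad A := \mathcal{C} + \mathcal{N}_\lambda, \qquad H(u_\Gamma) := g(u_\Gamma) - d\partial_\nu \mathcal{R}_\lambda\bigl(f(\mathcal{D}_\lambda(u_\Gamma)),0\bigr).
\]
By Theorem \ref{thm1} and Proposition \ref{D-N-lemma}, $-A$ generates an analytic $C_0$-semigroup on $L^p(\Gamma)$, and by Lemma \ref{Diri-nonlinear-lemma} (with $f,g\in C^{\infty}$), the nonlinearity $H$ is $C^{\infty}$ between the relevant Banach scales. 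I would run a standard time-bootstrap for abstract semilinear parabolic equations: formal differentiation shows that $v = \partial_t u_\Gamma$ solves the linear Cauchy problem $\partial_t v + A v = H'(u_\Gamma)v$, for which maximal $L^p$-regularity (Theorem \ref{thm1}, perturbed by the lower-order term $\mathcal{N}_\lambda$ and the time-dependent multiplier $H'(u_\Gamma)$) is available on each compact subinterval $[\tau, T]\subset (0,t^+)$. Starting from the base regularity furnished by Propositions \ref{semilinear} and \ref{escher-results} and iterating, one obtains $u_\Gamma \in C^{\infty}(0,t^+; D(A^k))$ for every $k\in \mathbb{N}$. Since $A$ is an elliptic differential operator of positive order on the closed compact manifold $\Gamma$, one has $\bigcap_k D(A^k)\subset C^{\infty}(\Gamma)$, and hence $u_\Gamma \in C^{\infty}(0,t^+; C^{\infty}(\Gamma))$.

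With this in hand, Lemma \ref{time-reg-lem} (applied with $\varphi := u_\Gamma$) immediately yields $u \in C^{\infty}(0,t^+; H^{1,p}(\Omega))$. To upgrade the spatial regularity, fix $t\in (0,t^+)$: using the smoothness of $u_\Gamma(t,\cdot)\in C^{\infty}(\Gamma)$ together with $f\in C^{\infty}$, classical Agmon--Douglis--Nirenberg elliptic regularity for the Dirichlet problem $\lambda u - \mathrm{div}(d\nabla u) = f(u)$ with boundary datum $u_\Gamma(t,\cdot)$ gives $u(t,\cdot)\in C^{\infty}(\overline{\Omega})$ by a straightforward spatial bootstrap. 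A repetition of the implicit-function argument of Lemma \ref{time-reg-lem} in the higher-order spaces $H^{k,p}(\Omega)$ (using that $\mathcal{R}_\lambda$ gains two derivatives and that $v\mapsto f(v)$ is smooth on these spaces for $k$ large) then promotes $u$ to $C^{\infty}(0,t^+; H^{k,p}(\Omega))$ for every $k$, and thus to $C^{\infty}(0,t^+; C^{\infty}(\overline{\Omega}))$. The main obstacle is the time-bootstrap for $u_\Gamma$: one must verify that the linearized equations admit maximal regularity with constants that are uniform on each subinterval and that $H$ is smooth between all intermediate Banach scales; the $C^{\infty}$-regularity of $\mathcal{D}_\lambda$ provided by Lemma \ref{Diri-nonlinear-lemma} is precisely what makes this feasible.
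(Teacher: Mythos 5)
Your overall strategy---first bootstrap the parabolic equation for the trace $u_\Gamma$ on $\Gamma$ to get $u_\Gamma\in C^\infty(0,t^+;C^\infty(\Gamma))$, then lift to $\Omega$ via Lemma \ref{time-reg-lem} and elliptic regularity---is the same as the paper's. The execution differs in two places. For the boundary bootstrap, the paper does not differentiate the equation and invoke maximal regularity for the linearization; it writes $u_\Gamma$ by the variation of constants formula for the semigroup generated by $-\mathcal{C}$ (resp.\ $-\mathcal{N}_\lambda$), keeping $-d\partial_\nu u$ in the inhomogeneity, and alternates between Lunardi's mild-solution regularity and elliptic regularity for $u=\mathcal{R}_\lambda(f(u),u_\Gamma)$ to raise the spatial order step by step. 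Your differentiated-equation route is workable but note that ``formal differentiation'' of the abstract equation has to be justified (difference quotients or the very mild-solution formula you are avoiding), and that for $\delta\equiv 0$ the generator is the Dirichlet--Neumann operator, which is pseudodifferential rather than a differential operator; the inclusion $\bigcap_k D(A^k)\subset C^\infty(\Gamma)$ still holds, but via the domain characterization in Proposition \ref{D-N-lemma}, not via interior elliptic regularity.

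The one genuinely under-justified step is your final upgrade to $C^\infty(0,t^+;H^{k,p}(\Omega))$ by ``repeating the implicit-function argument of Lemma \ref{time-reg-lem} in $H^{k,p}(\Omega)$.'' The invertibility of $D_2\mathcal{F}(t,v)h=h-\mathcal{R}_\lambda(f'(v)h,0)$ in Lemma \ref{time-reg-lem} rests on the smallness estimate $\Vert\mathcal{R}_\lambda(f'(v)h,0)\Vert_{H^{1,p}}\le C_D\lambda_*^{-1/2}c_f\Vert h\Vert_{H^{1,p}}$, which uses only $|f'|\le c_f$ and is specific to the $L^p\to H^{1,p}$ mapping of $\mathcal{R}_\lambda$. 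In $H^{k,p}$ with $k\ge 2$ the operator norm of $h\mapsto f'(v)h$ as a map $H^{k,p}\to H^{k-2,p}$ involves derivatives of $f'(v)$, so the constant is no longer $c_f$ and cannot be beaten by choosing $\lambda_*$ large; one must instead argue via Fredholmness plus injectivity (injectivity being inherited from the $H^{1,p}$ estimate), or avoid the issue altogether. The paper avoids it: it uses Lemma \ref{time-reg-lem} only to get $u\in C^\infty(0,T;H^{1,p}(\Omega))$, and then obtains spatial smoothness of the time derivatives from the identity $\partial_t^k u=\mathcal{R}_\lambda(\partial_t^k(f(u)),\partial_t^k u_\Gamma)$, writing $\partial_t^k(f(u))=f'(u)\partial_t^k u+\psi$ and absorbing $f'(u)\partial_t^k u$ into the elliptic operator $\mathcal{A}-f'(u)$, which is admissible for $\lambda\ge\lambda_D+c_f$ precisely because $|f'|\le c_f$. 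You should either adopt that device or supply the Fredholm argument; as written, this step is a gap.
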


\begin{proof}
Throughout we fix $T < t^+$.

\emph{Step 1.} First let $\delta \geq \delta_*$. For sufficiently large $%
\rho $ the operator $\rho + \mathcal{C}$ is invertible and commutes with $-%
\mathcal{C}$. Employing local arguments as in the proof of Theorem \ref{thm1}
and interpolation, we obtain that $\rho +\mathcal{C}$ is an isomorphism $%
W^{2+\theta,p}(\Gamma)\to W^{\theta,p}(\Gamma)$ for all $\theta \geq 0$.
Thus $-\mathcal{C}$ with domain $W^{2+\theta,p}(\Gamma)$ generates an
analytic $C_0$-semigroup on $W^{\theta,p}(\Gamma)$ for all $\theta$.

The trace $u_\Gamma$ may be represented by 
\begin{equation}  \label{mild-sol}
u_\Gamma(t,\cdot) = e^{- \mathcal{C }t }u_0 + e^{- \mathcal{C}\cdot } *\big (%
g(u_\Gamma) - d \partial_\nu u\big )(t), \quad t\in (0,T].
\end{equation}
Since $u \in C([0,T]; H^{2-1/p}(\Omega))$ we have $g(u_\Gamma) - d
\partial_\nu u\in C([0,T]; W^{1-2/p,p}(\Gamma))$. We may thus consider %
\eqref{mild-sol} as an identity on $W^{1-2/p,p}(\Gamma)$, and obtain from 
\cite[Corollary 4.3.9]{Lun95} that 
\begin{equation*}
u_\Gamma \in C^1 ((0,T]; W^{1-2/p,p}(\Gamma)) \cap C ((0,T];
W^{3-2/p,p}(\Gamma)).
\end{equation*}
Since $u = \mathcal{R}_\lambda (f(u), u_\Gamma)$ and $f(u(t,\cdot)) \in
H^{1-1/p}(\Omega)$, we further obtain from \cite[Theorem 13.1]{Amann84} that 
$u(t,\cdot)\in H^{3-1/p,p}(\Omega)$ for all $t$, and that 
\begin{align}
\|u(t_1,\cdot) - u(t_2,\cdot)\|_{H^{3-1/p,p}(\Omega)} &\, \leq C\big( %
\|f(u(t_1,\cdot)) - f(u(t_2,\cdot))\|_{H^{1-1/p}(\Omega)}  \notag \\
&\, \qquad\quad + \|u_\Gamma(t_1,\cdot)- u_\Gamma(t_2,\cdot)\|_{
W^{3-2/p,p}(\Gamma)}\big)  \label{class-est}
\end{align}
with a constant $C$ independent of $t_1,t_2\in (0,T]$. Thus $u \in C((0,T];
W^{3-1/p,p}(\Omega))$. An iteration of these arguments together with
Sobolev's embeddings gives 
\begin{equation*}
u \in C^1((0,T]; C^\infty(\Gamma))\cap C((0,T]; C^\infty(\overline{\Omega})).
\end{equation*}
Now it follows from \eqref{mild-sol} that $u_\Gamma \in C^\infty((0,T];
C^\infty(\Gamma))$. Moreover, Lemma \ref{time-reg-lem} implies that $u\in
C^\infty((0,T]; H^{1,p}(\Omega))$.

\emph{Step 2.} Let now $\delta \equiv 0$. By Proposition \ref{escher-results}
we have $u\in C^1((0,T]; C^\infty(\Gamma)) \cap C((0,T]; C^\infty(\overline{%
\Omega}))$, and further 
\begin{equation*}
u_\Gamma(t,\cdot) = e^{-\mathcal{N}_\lambda t} u_0 + e^{-\mathcal{N}%
_\lambda\cdot } * \big (g(u_\Gamma) - d \partial_\nu \mathcal{R}%
_\lambda(f(u),0)\big)(t), \quad t\in (0,T].
\end{equation*}
As above, this formula yields $u_\Gamma\in C^\infty((0,T]\times \Gamma)$ and
then $u\in C^\infty((0,T]; H^{1,p}(\Omega))$ by Lemma \ref{time-reg-lem}.

\emph{Step 3.} For both types of boundary conditions it now follows from the
linearity and the continuity of $\mathcal{R}_\lambda$ that 
\begin{equation*}
\partial_t^k u = \mathcal{R}_\lambda (\partial_t^k (f(u)), \partial_t^k
u_\Gamma)
\end{equation*}
for all $k\in \mathbb{N}$. Now argue by induction and suppose that $%
\partial_t^{k-1} u \in C((0,T]; C^\infty(\overline{\Omega}))$. Note that $%
\partial_t^k(f(u))$ is of the form $f^{\prime}(u) \partial_t^k u + \psi$,
where $\psi \in C((0,T]; C^\infty(\overline{\Omega}))$ is a polynomial in
the derivatives of $u$ up to the order $k-1$ and derivatives of $f$ with $u$
inserted. Since $|f^{\prime}(u)| \leq c_f$ we may apply \cite[Theorem 13.1]%
{Amann84} to $\mathcal{A }- f^{\prime}(u)$ for all $\lambda \geq \lambda_D +
c_f$ and estimate as in \eqref{class-est} to obtain $\partial_t^k u \in
C((0,T]; C^\infty(\overline{\Omega}))$.
\end{proof}

\subsection{Blow-up}

\label{bl}

In this subsection we assume that $d\equiv d_* >0$ and $\delta \equiv
\delta_*\geq 0$ are constants. Our blow-up results are based on the method
of subsolutions and the following comparison lemma. Its proof is inspired by 
\cite[Theorem II.3]{Rothe}.

\begin{lemma}
\label{comparison} Assume $f,g\in C^1(\mathbb{R})$ with $|f^{\prime}|\leq
c_f $, $\lambda \geq c_f$, $d > 0$ and $\delta\geq 0$. If 
\begin{equation*}
u,v\in C([0,T]\times \overline{\Omega}) \cap C^1((0,T]; C(\Gamma)) \cap
C((0,T]; C^2(\overline{\Omega}))
\end{equation*}
satisfy 
\begin{equation*}
\left\{ 
\begin{array}{ll}
\lambda v-d\Delta v-f(v) \geq \lambda u - d \Delta u - f(u) & \emph{\text{in 
}}(0,T] \times \Omega, \\ 
\partial _{t}v_{\Gamma }-\delta \Delta _{\Gamma }v_{\Gamma }+d\partial _{\nu
}v- g(v_{\Gamma }) \geq \partial _{t}u_{\Gamma }-\delta \Delta _{\Gamma
}u_{\Gamma }+d\partial _{\nu }u- g(u_{\Gamma }) & \emph{\text{on }}%
(0,T]\times \Gamma, \\ 
v_\Gamma|_{t=0}\geq u_\Gamma|_{t=0} & \emph{\text{on }}\Gamma,%
\end{array}%
\right.
\end{equation*}%
then $v \geq u$ on $[0,T]\times \overline{\Omega}$.
\end{lemma}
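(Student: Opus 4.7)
The plan is to set $w := u - v$ and establish that $w^+ := \max(w, 0) \equiv 0$ on $[0,T] \times \overline{\Omega}$ by a coupled bulk–boundary energy estimate. Subtracting the three hypothesized inequalities yields
\begin{align*}
\lambda w - d\Delta w &\leq f(u) - f(v) \qquad \text{in } (0,T] \times \Omega, \\
\partial_t w_\Gamma - \delta \Delta_\Gamma w_\Gamma + d\partial_\nu w &\leq g(u_\Gamma) - g(v_\Gamma) \qquad \text{on } (0,T] \times \Gamma,
\end{align*}
together with $w_\Gamma^+|_{t=0} = 0$ on $\Gamma$.

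First, I would test the bulk inequality with $w^+$ and integrate over $\Omega$, using Stampacchia's chain rule ($\nabla w^+ = \chi_{\{w > 0\}}\nabla w$, $w\, w^+ = (w^+)^2$) together with Green's formula, to obtain
\[
\lambda \int_\Omega (w^+)^2 + d\int_\Omega |\nabla w^+|^2 \leq \int_\Omega (f(u) - f(v))\, w^+ + \int_\Gamma d\partial_\nu w \cdot w_\Gamma^+.
\]
Next I would test the boundary inequality with $w_\Gamma^+$ and integrate over $\Gamma$, a closed manifold, so that no further boundary terms appear:
\[
\tfrac{1}{2}\tfrac{d}{dt}\int_\Gamma (w_\Gamma^+)^2 + \delta \int_\Gamma |\nabla_\Gamma w_\Gamma^+|^2 + \int_\Gamma d\partial_\nu w \cdot w_\Gamma^+ \leq \int_\Gamma (g(u_\Gamma) - g(v_\Gamma))\, w_\Gamma^+.
\]
Adding these causes the $d\partial_\nu w$ coupling terms to cancel, leaving all four dissipation contributions on the left-hand side.

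Using $|f'|\leq c_f$ and $\lambda \geq c_f$, the $f$-difference is absorbed into the bulk dissipation. Since $u,v$ are continuous on the compact set $[0,T]\times \overline{\Omega}$ they are bounded, so $g$ is Lipschitz on their common range with some constant $L_g$, giving $(g(u_\Gamma) - g(v_\Gamma))\, w_\Gamma^+ \leq L_g (w_\Gamma^+)^2$. Dropping the remaining nonnegative dissipation terms yields
\[
\tfrac{d}{dt}\|w_\Gamma^+(t)\|_{L^2(\Gamma)}^2 \leq 2L_g \|w_\Gamma^+(t)\|_{L^2(\Gamma)}^2, \qquad w_\Gamma^+(0) = 0,
\]
whence Gronwall's lemma forces $w_\Gamma^+ \equiv 0$ on $[0,T]\times \Gamma$. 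Returning to the added estimate with $w_\Gamma^+ = 0$, the time derivative and the $g$ term both vanish and we conclude $(\lambda - c_f)\|w^+\|_{L^2(\Omega)}^2 + d\|\nabla w^+\|_{L^2(\Omega)}^2 \leq 0$, so that $w^+ \equiv 0$ in $\Omega$, i.e.\ $u \leq v$ throughout $[0,T]\times \overline{\Omega}$.

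The technical points requiring care are: (i) the validity of Stampacchia's chain rule and Green's formula at the regularity provided, which follows from $w(t,\cdot)\in C^2(\overline{\Omega})$ for $t>0$; (ii) the fact that $\partial_t w_\Gamma$ exists only on $(0,T]$, so the integrated form of the differential inequality should be derived on $[\varepsilon,T]$ and passed to the limit $\varepsilon \to 0^+$ using continuity of $w_\Gamma^+$ at $t=0$, which is guaranteed by $u,v\in C([0,T]\times\overline{\Omega})$; and (iii) the observation that only a \emph{local} Lipschitz constant for $g$ is needed, available thanks to the uniform boundedness of $u$ and $v$ on the compact cylinder. The main conceptual difficulty here is the precise bookkeeping of the boundary coupling: the $d\partial_\nu w$ term has no sign in isolation, and the whole argument hinges on its exact cancellation when the bulk and surface energy identities are added.
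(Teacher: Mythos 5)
Your energy argument is essentially correct, but it is a genuinely different proof from the one in the paper. The paper argues pointwise: it factors $\lambda s-f(s)$ and $g(s)$ through nonnegative continuous coefficients $a$ and $b$ (using $|f'|\le c_f$, $\lambda\ge c_f$ and a constant $L$), sets $w=e^{Lt}(v-u)$, assumes $\min w<0$, and derives a contradiction by locating the minimizer: the strong maximum principle for $d\Delta w-aw\le 0$ forces it onto $\Gamma$, the Hopf lemma gives $\partial_\nu w<0$ there, and a local-coordinate computation shows $\Delta_\Gamma w\ge 0$ at a boundary minimum, so that all terms in the dynamic boundary inequality have the wrong sign. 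Your route instead tests the two subtracted inequalities with $w^+$ and $w_\Gamma^+$, exploits the exact cancellation of the flux term $d\int_\Gamma\partial_\nu w\,w_\Gamma^+$, and closes with Gronwall; this trades the Hopf lemma and the boundary-minimum analysis for integration by parts and Stampacchia's chain rule, all of which are justified at the stated regularity ($w(t,\cdot)\in C^2(\overline\Omega)$ for $t>0$, hence $w_\Gamma(t,\cdot)\in C^2(\Gamma)$, and the $\varepsilon\to 0^+$ limit you describe handles the lack of $\partial_t w_\Gamma$ at $t=0$). Your method is arguably more robust (it extends immediately to weak solutions and to variable coefficients in divergence form), while the paper's is sharper in that it needs no integrability structure and identifies exactly where positivity could fail.

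One small point to tighten: in the final step, the inequality $(\lambda-c_f)\Vert w^+\Vert_{L^2(\Omega)}^2+d\Vert\nabla w^+\Vert_{L^2(\Omega)}^2\le 0$ only yields $w^+\equiv 0$ directly when $\lambda>c_f$. The lemma permits $\lambda=c_f$, in which case you merely obtain $\nabla w^+\equiv 0$, so $w^+$ is constant on the connected domain $\Omega$; you must then invoke $w_\Gamma^+\equiv 0$ together with continuity of $w^+$ up to $\Gamma$ to conclude that this constant is zero. This is a one-line addition, not a flaw in the approach.
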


\begin{proof}
The assumptions on $f$ and $\lambda$ imply that the function 
\begin{equation*}
a(t,x) = \frac{\lambda v(t,x) - f(v(t,x)) - (\lambda u(t,x) -f(u(t,x)))}{%
v(t,x) - u(t,x)}
\end{equation*}
is continuous and nonnegative on $[0,T]\times \overline{\Omega}$. Moreover,
for all $(t,x)\in [0,T]\times \overline{\Omega}$ we can write 
\begin{equation*}
g(v(t,x)) - g(u(t,x)) = (L- b(t,x))(v(t,x) -u(t,x)),
\end{equation*}
where $L>0$ is a constant and $b$ is continuous and nonnegative on $%
[0,T]\times \Gamma$. Define 
\begin{equation*}
w(t,x):= e^{Lt}(v(t,x) -u(t,x)).
\end{equation*}
We suppose that $m:= \min_{ [0,T]\times \overline{\Omega}} w < 0 $ and
derive a contradiction. Let $(t_0,x_0)\in (0,T]\times \overline{\Omega}$ be
such that $m = w(t_0,x_0)$. The function $w$ satisfies 
\begin{equation*}
\lambda w - d\Delta w \geq e^{Lt_0}(f(v)-f(u)) \qquad \text{in }%
\{t_0\}\times \Omega,
\end{equation*}
and is thus a classical solution of 
\begin{equation*}
d\Delta w - e^{Lt_0} (\lambda v -f(v) - (\lambda u - f(u))) = d \Delta w - a
w \leq 0 \qquad \text{in }\{t_0\}\times \Omega.
\end{equation*}
Since $- a \leq 0$ we deduce from the strong maximum principle \cite[Theorem
3.5]{GT} that $x_0\in \Gamma$. Now the Hopf lemma \cite[Lemma 3.4]{GT}
implies $\partial_\nu w(t_0,x_0) < 0$. Therefore 
\begin{equation}  \label{1001}
\partial_t w(t_0,x_0) - \delta \Delta_\Gamma w(t_0,x_0) + b(t_0,x_0)
w(t_0,x_0) > 0.
\end{equation}
As $b \geq 0$ we have $b(t_0,x_0) w(t_0,x_0) \leq 0$, and further $%
\partial_t w(t_0,x_0) \leq 0$ since $t\mapsto w(t,x_0)$ attains its minimum
in $t_0$. Moreover, in case $\delta > 0$, take orthogonal coordinates $\text{%
g}:U\subset \mathbb{R}^{n-1} \to \Gamma$ for $x_0\in \Gamma$, with $\text{g}%
(y_0) = x_0$ for some $y_0 \in U$. Then $y \mapsto w(t_0,\text{g}(y))$ has a
local minimum in $y_0$, which implies that $\nabla_y w(t_0,\text{g}(y_0)) =
0 $ and $\Delta_y w(t_0,\text{g}(y_0)) \geq 0$. Hence the formula for $%
\Delta_\Gamma$ in coordinates yields 
\begin{equation*}
\Delta_\Gamma w(t_0,x_0) = \Delta_\Gamma w(t_0, \text{g}(y_0)) = \Delta_y
w(t_0,\text{g}(y_0)) \geq 0.
\end{equation*}
The signs of the terms on the left-hand side of \eqref{1001} lead to a
contradiction.
\end{proof}

To obtain appropriate subsolutions we modify the ones from \cite[Lemma 4.1]%
{AMTR}.

\begin{proposition}
\label{blow-up-prop} Let \eqref{classical-assum} hold and assume $d \equiv
d_*$ and $\delta \equiv \delta_*$. Let further 
\begin{equation*}
\frac{g(\xi)}{\xi^q} \to + \infty \qquad \text{as }\xi \to +\infty,
\end{equation*}
for some $q>1$. Then there is $C>0$ such that if $u_0\in \mathcal{X}_\delta$
satisfies $u_0 \geq C$, then the solution of \eqref{ell-dyn-classic} blows
up in finite time.
\end{proposition}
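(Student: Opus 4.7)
I would attack this by building a subsolution that itself blows up in finite time, and then combining Lemma \ref{comparison} with Proposition \ref{classic-diff} to force the actual solution $u$ starting above this subsolution to blow up as well. The smoothness result is essential because Lemma \ref{comparison} demands $C^1$-in-time, $C^2$-in-space classical regularity.

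The ansatz I would try is $v(t,x) = \phi(t)\,W(x)$, where $W\in C^2(\overline{\Omega})$ is a strictly positive spatial profile and $\phi$ solves a scalar ODE. A convenient choice for $W$ is an exponential $W(x)=e^{\alpha\cdot x}$ with $|\alpha|^2$ chosen so that $-d_*\Delta W+\lambda W \le -\mu W$ on $\overline{\Omega}$ for some $\mu>c_f$; then $W$ is bounded above and below by positive constants, and $\partial_\nu W$ and $\Delta_\Gamma W$ are bounded on $\Gamma$. Using the Lipschitz bound $f(\xi)\ge -c_f|\xi|-|f(0)|$, the interior subsolution inequality
\[
\lambda v-d_*\Delta v-f(v)=\phi\bigl(\lambda W-d_*\Delta W\bigr)-f(\phi W)\le 0
\]
reduces to $(\mu-c_f)\phi W\ge |f(0)|$, which holds uniformly on $\overline{\Omega}$ once $\phi$ exceeds a threshold $\phi_*$ depending only on $\mu,c_f,|f(0)|$ and $\min_{\overline{\Omega}}W$.

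For the boundary inequality, set $w_*:=\min_{\Gamma}W$ and $w^*:=\max_{\Gamma}W$, and let $K$ be an upper bound for $|\delta_*\Delta_\Gamma W-d_*\partial_\nu W|$ on $\Gamma$. Then the pointwise requirement
\[
\phi'(t)W(x)-\delta_*\phi(t)\Delta_\Gamma W(x)+d_*\phi(t)\partial_\nu W(x)-g\bigl(\phi(t)W(x)\bigr)\le 0\quad\text{on }\Gamma
\]
is implied by the scalar ODE
\[
\phi'(t)=\frac{1}{w^*}\,g\bigl(w_*\,\phi(t)\bigr)-K\phi(t),\qquad \phi(0)=\phi_0,
\]
provided $g$ is nondecreasing on $[w_*\phi_0,\infty)$, which we may arrange by taking $\phi_0$ large (since $g(\xi)/\xi^q\to\infty$ forces $g$ to be eventually increasing). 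The hypothesis $g(\xi)/\xi^q\to+\infty$ with $q>1$ gives $g(w_*\phi)\ge w_*^q\phi^q$ for $\phi$ large, so $\phi'\ge c\,\phi^q-K\phi$, and this ODE blows up at a finite time $T^*=T^*(\phi_0)<\infty$ whenever $\phi_0$ is chosen sufficiently large.

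Finally, set $C:=\phi_0\,\max_{\overline{\Omega}}W$. If $u_0\ge C$, then $u_0\ge \phi_0 W|_\Gamma=v(0,\cdot)$ on $\Gamma$. By Proposition \ref{classic-diff}, $u\in C^\infty((0,t^+(u_0))\times\overline{\Omega})$, and by construction $v$ enjoys the same regularity on $[0,T^*)\times\overline{\Omega}$. Lemma \ref{comparison} applied on any $[0,T]$ with $T<\min(t^+(u_0),T^*)$ then gives $u\ge v$ pointwise, so $u$ cannot remain bounded past $T^*$, which proves $t^+(u_0)\le T^*<\infty$. The main obstacle I anticipate is ensuring all three constraints on $W$ are simultaneously compatible --- strict positivity, the interior supersolution inequality $-d_*\Delta W+(\lambda+c_f)W\le 0$, and uniform bounds on its boundary data --- without any sign condition on $f$; the exponential profile trick neatly sidesteps this since its Laplacian scales freely with $|\alpha|$, and the superlinearity of $g$ absorbs the resulting linear correction $K\phi$ in the ODE.
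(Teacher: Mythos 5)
Your overall strategy --- construct a pointwise subsolution that blows up in finite time, then invoke Lemma \ref{comparison} together with the classical regularity from Proposition \ref{classic-diff} --- is exactly the paper's, but your subsolution is genuinely different. The paper uses the travelling profile $\underline{u}(t,x)=\varphi\bigl(\sum_i x_i+t\bigr)$ with $\varphi(s)=(c-(r-1)s)^{-1/(r-1)}$ and $1<r\le q$, so that $\varphi'=\varphi^{r}$; the blow-up time is fixed at $t=1$, and both the interior and the boundary inequalities are forced by taking $r$ close to $1$, which makes $\underline{u}$ (and $\underline{u}^{2(r-1)}$) uniformly large. Your separated ansatz $\phi(t)e^{\alpha\cdot x}$ decouples the two roles more cleanly: the exponential with $|\alpha|$ large handles the interior inequality for any $\lambda$ and any globally Lipschitz $f$, while a scalar ODE for $\phi$ carries the blow-up. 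Both constructions are of comparable difficulty and both reduce correctly to the comparison lemma; the interior step, the choice of $C=\phi_0\max_{\overline{\Omega}}W$, and the final contradiction at $T^*$ are all fine.

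Two points in your boundary step need repair. First, the parenthetical claim that $g(\xi)/\xi^{q}\to+\infty$ forces $g$ to be eventually increasing is false: $g(\xi)=\xi^{q+1}\bigl(2+\sin(\xi^{10})\bigr)$ satisfies the hypothesis but is monotone on no neighbourhood of $+\infty$, since the derivative of the oscillating factor dominates. Fortunately you do not need monotonicity: the hypothesis gives, for every $M>0$, a $\xi_M$ with $g(\xi)\ge M\xi^{q}$ for all $\xi\ge\xi_M$, and this lower bound applies \emph{pointwise} to every value $\phi(t)W(x)\in[\phi w_*,\phi w^*]$ once $\phi_0 w_*\ge\xi_M$ and $\phi$ is increasing; so replace the reduction to the left endpoint $g(w_*\phi)$ by the direct bound $g(\phi W(x))\ge M w_*^{q}\phi^{q}$ and use the ODE $\phi'=cM\phi^{q}-K\phi$, which for $q>1$ and $\phi_0$ large still blows up in finite time. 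Second, your $K$ bounds $|\delta_*\Delta_\Gamma W-d_*\partial_\nu W|$, so the term to be absorbed is $\le K\phi$, whereas the $-K\phi$ in your ODE gets multiplied by $W(x)$ when you test the boundary inequality; the bookkeeping only closes if $W\ge 1$ on $\Gamma$ (or if $K$ is defined relative to $W$). Normalizing the exponential so that $W\ge 1$ on $\overline{\Omega}$ costs nothing and fixes this. With these two adjustments the proof is correct.
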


\begin{remark}
For $\delta =0$, blow-up results for \eqref{ell-dyn-classic} with $f \neq 0$
were obtained in \cite{Vulkov} by the so-called concavity method.
\end{remark}

\begin{proof}
For $1<r\leq q$, let $\varphi(s) := ( c- (r-1)s)^{-1/(r-1)}$, where $c:=
(r-1) (\max_{y\in \overline{\Omega}} \sum_i y_i + 1)$, such that $%
\varphi^{\prime}= \varphi^r$ and $\varphi^{\prime\prime}= r \varphi^{2r-1}$.
Define $\underline{u}$ by 
\begin{equation*}
\underline{u}(t,x) : = \varphi\Big(\sum_i x_i + t\Big) = \Big ((r-1) \Big[ %
\max_{y\in \overline{\Omega}} \sum_i y_i - \sum_i x_i + 1 - t\Big] \Big)^{-
1/(r-1)},
\end{equation*}
which is well-defined on $\overline{\Omega}$ as long as $t < 1.$ Observe
that $\underline{u}$ is positive and that 
\begin{equation}  \label{1002}
\text{for all $K>0$ there is $r>1$ such that} \qquad \underline{u}(t,x) \geq
K \quad \text{on } [0,1)\times \overline{\Omega}.
\end{equation}
We check that $\underline{u}$ is a subsolution of \eqref{ell-dyn-classic} on 
$(0,1)\times \overline{\Omega}$ for a suitable choice of $r$.

First consider the elliptic equation. The assumption on $\lambda$ and $f$
yields 
\begin{equation*}
\lambda \underline{u} - f(\underline{u}) \leq (\lambda+c_f) \underline{u}
-f(0),
\end{equation*}
and we have $\Delta \underline{u} = n r \underline{u}^{2r-1}$. By %
\eqref{1002} (with $\underline{u}^{2(r-1)}$ instead of $\underline{u}$) we
can achieve the inequality 
\begin{equation*}
(\lambda+c_f) \leq d n r \underline{u}^{2(r-1)} + f(0)/\underline{u} \qquad 
\text{on } (0,1)\times \Omega
\end{equation*}
if $r$ is sufficiently close to $1$.

For the boundary equation we have $\partial_t \underline{u} = \underline{u}%
^r $ and $\partial_\nu \underline{u} = (\nu \cdot \mathbf{1}) \underline{u}%
^r $, where $\mathbf{1} = (1,...,1)\in \mathbb{R}^n$. To treat the
Laplace-Beltrami term in case $\delta >0$, fix $x_0\in \Gamma$ and take
orthogonal coordinates $\text{g}:U\subset \mathbb{R}^{n-1}\to \Gamma$ for $%
x_0$, such that $x_0 = \text{g}(y_0)$ for some $y_0\in U$. Let $|\text{G}|$
be the Gramian and let $\text{G}^{-1} = (\text{g}^{ij})_{i,j=1,...,n-1}$ be
the inverse fundamental form with respect to $\text{g}$. We write $a(x) =
\sum_i x_i$ for simplicity. Since $(\text{g}^{ij})_{y=y_0}$ equals the
Kronecker symbol, we have 
\begin{align*}
(\Delta_\Gamma \underline{u})(t,x_0) &\, = \sum_{i,j=1}^{n-1} \partial_i %
\big [ \sqrt{|\text{G}|} \text{g}^{ij} \partial_j(\varphi( a\circ \text{g}%
(y_0) + t))\big] \\
&\, = \underline{u}^r(t,x_0) \Delta_\Gamma a(x_0) + r \underline{u}%
^{2r-1}\sum_{i=1}^{n-1} |\partial_i (a\circ \text{g})(y_0)|^2 \\
&\, \geq m \underline{u}^r(t,x_0),
\end{align*}
where $m = \min_{x\in \Gamma} \Delta_\Gamma a(x)$. Therefore on $(0,1)\times
\Gamma$ we have 
\begin{equation*}
\partial_t \underline{u} - \delta \Delta_\Gamma \underline{u} + d
\partial_\nu \underline{u} \leq (1 - \delta m + \nu \cdot \mathbf{1}) 
\underline{u}^r \leq g(\underline{u})
\end{equation*}
when choosing $r$ such that $(1 - \delta m + \nu \cdot \mathbf{1}) \leq g(%
\underline{u})/\underline{u}^r$ on $(0,1)\times \Gamma$, which is possible
by assumption on $g$ and \eqref{1002}. Hence $\underline{u}$ is a
subsolution of \eqref{ell-dyn-classic} if $r$ is appropriate.

Now take $u_0\in \mathcal{X}_\delta$ with $u_0 \geq \underline{u}|_{t=0}$ on 
$\overline{\Omega}$. Let $u$ be the corresponding classical solution of %
\eqref{ell-dyn-classic}. Then $u \geq \underline{u}$ on $\overline{\Omega}$
by Lemma \ref{comparison}, as long as $u$ exists. Thus $u$ blows up at $t=1$.
\end{proof}

In case $f \equiv 0$ we can refine the blow-up condition for $g$.

\begin{proposition}
\label{blow-up-prop-2} Let $d> 0$ and $\delta \geq 0$. Suppose that there is 
$\xi_0$ such that $g(\xi)>0$ for $\xi \geq \xi_0$, and that 
\begin{equation*}
\int_{\xi_0}^\infty \frac{d\xi}{g(\xi)} < \infty.
\end{equation*}
Then there is $C>0$ such that for all $u_0\in \mathcal{X}_\delta$ the
solution of 
\begin{equation}
\left\{ 
\begin{array}{ll}
\Delta u=0 & \emph{\text{in }}(0,T)\times \Omega , \\ 
\partial _{t}u_{\Gamma }-\delta \Delta _{\Gamma }u_{\Gamma }+d\partial _{\nu
}u= g(u_\Gamma) & \emph{\text{on }} (0,T)\times \Gamma , \\ 
u_\Gamma|_{t=0}=u_{0} & \emph{\text{on }}\Gamma ,%
\end{array}%
\right.  \label{blow-up-2}
\end{equation}
blows up in finite time.
\end{proposition}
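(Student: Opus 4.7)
The idea is to construct a spatially constant subsolution from an ODE whose finite-time blow-up is guaranteed by the Keller--Osgood-type condition $\int_{\xi_0}^{\infty} d\xi/g(\xi) < \infty$, and then apply the comparison principle from Lemma \ref{comparison}. Since the statement should read ``for all $u_0 \in \mathcal{X}_\delta$ with $u_0 \geq C$'' (by analogy with Proposition \ref{blow-up-prop}), I would fix $C \geq \xi_0$ arbitrarily; this ensures $g > 0$ on $[C,\infty)$.

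Consider the Cauchy problem $y'(t) = g(y(t))$, $y(0) = C$. Because $g > 0$ on $[C,\infty)$, the solution $y$ is strictly increasing, and by separation of variables it satisfies
\begin{equation*}
\int_C^{y(t)} \frac{d\xi}{g(\xi)} = t.
\end{equation*}
Under the standing hypothesis $\int_C^{\infty} d\xi/g(\xi) < \infty$, this identity forces $y(t) \to +\infty$ as $t \uparrow T^* := \int_C^{\infty} d\xi/g(\xi) < \infty$. I would then define $\underline{u}(t,x) := y(t)$ on $[0,T^*) \times \overline{\Omega}$. Since $y \in C^\infty([0,T^*))$ and $\underline{u}$ is spatially constant, we have $\Delta \underline{u} = 0$, $\Delta_\Gamma \underline{u}_\Gamma = 0$, $\partial_\nu \underline{u} = 0$, and $\partial_t \underline{u}_\Gamma = y'(t) = g(y(t)) = g(\underline{u}_\Gamma)$. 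So $\underline{u}$ is in fact an \emph{exact} classical solution of \eqref{blow-up-2} with constant initial datum $C$, and clearly has the regularity required in Lemma \ref{comparison}.

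Given $u_0 \in \mathcal{X}_\delta$ with $u_0 \geq C$, let $u$ be the classical solution of \eqref{blow-up-2} provided by Proposition \ref{escher-results} (or its analog for $\delta > 0$ via Proposition \ref{semilinear}), which by Proposition \ref{classic-diff} has the smoothness required to apply Lemma \ref{comparison}. Since here $f \equiv 0$ and $\lambda = 0$, the hypothesis $\lambda \geq c_f$ of the comparison lemma is trivially met with $c_f = 0$. Applying Lemma \ref{comparison} on any $[0,T]$ with $T < \min\{t^+(u_0), T^*\}$ to the pair $(\underline{u}, u)$ yields $u(t,x) \geq y(t)$ on $[0,T] \times \overline{\Omega}$. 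If $u$ were to exist up to $T^*$, its trace would blow up in the supremum norm at $T^*$ by the lower bound $y(t) \to \infty$, contradicting continuity into $\mathcal{X}_\delta$. Hence $t^+(u_0) \leq T^*$, establishing finite-time blow-up.

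The argument is essentially an ODE comparison, so there is no genuinely hard step; the only things to verify are that Lemma \ref{comparison} is applicable in the degenerate regime $\lambda = 0$ (immediate since $c_f = 0$) and that the spatially constant candidate $\underline{u}$ really fits Lemma \ref{comparison}'s regularity class (it does, trivially, being smooth in $t$ and constant in $x$). No structural assumption on the sign of $g$ on $(-\infty,\xi_0)$ is used, because once $u_0 \geq C \geq \xi_0$ the comparison traps $u$ above $y(t) \geq C$ forever.
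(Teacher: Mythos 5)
Your proof is correct and follows essentially the same route as the paper: the paper likewise takes the spatially homogeneous solution of the ODE $u'=g(u)$ with a large constant initial value as the blowing-up subsolution and invokes Lemma \ref{comparison}. You merely spell out the Osgood computation and the applicability of the comparison lemma (including the implicit hypothesis $u_0\geq C$) in more detail than the paper does.
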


\begin{remark}
Under the additional assumption that $g$ is entirely positive, the above
result was shown in \cite[Theorem 1]{Kir} for $\delta =0$.
\end{remark}

\begin{proof}
For a constant initial value $\underline{u}_0 > 0$ the solution of %
\eqref{blow-up-2} is given by the solution $\underline{u}$ of $u^{\prime}=
g(u)$ with $u|_{t=0} = \underline{u}_0$. If $\underline{u}_0$ is
sufficiently large, then it is well-known that the condition on $g$ implies
that $\underline{u}$ blows up in finite time. By Lemma \ref{comparison}, any
solution of \eqref{blow-up-2} with initial value $u_0 \geq \underline{u}_0$
blows up as well.
\end{proof}

\subsection{Global existence}

\label{globals} We now return to the slightly more general assumptions %
\eqref{classical-assum} with variable diffusion coefficients. First we
refine the blow-up conditions and show that for both types of boundary
conditions an $L^\infty$-bound for $u_\Gamma$ suffices for global existence.

\begin{lemma}
\label{ge2} Let \eqref{classical-assum} hold, and assume that for $u_0 \in 
\mathcal{X}_\delta$ the solution $u$ of \eqref{ell-dyn-classic} satisfies 
\begin{equation*}
u_\Gamma \in L^\infty((0,t^+)\times \Gamma).
\end{equation*}
Then $t^+ = \infty$.
\end{lemma}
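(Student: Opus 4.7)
The plan is to argue by contradiction. The semiflow property together with the blow-up alternative built into the definition of a local semiflow forces $\|u_\Gamma(t)\|_{\mathcal{X}_\delta}\to\infty$ as $t\to t^+$ whenever $t^+<\infty$. So it suffices to show that, under the $L^\infty$-hypothesis on $u_\Gamma$, the quantity $\|u_\Gamma(t)\|_{\mathcal{X}_\delta}$ remains bounded on $(0,t^+)$, uniformly as $t\to t^+$. I will handle both cases ($\delta\geq\delta_\ast>0$ and $\delta\equiv 0$) simultaneously via the abstract equation~\eqref{abs-semi}, writing $A:=\mathcal{C}+\mathcal{N}_\lambda$ in the reactive-diffusive case and $A:=\mathcal{N}_\lambda$ in the purely reactive case; in both situations $-A$ generates an analytic $C_0$-semigroup on $L^p(\Gamma)$, by Theorem~\ref{thm1} combined with the lower-order perturbation $\mathcal{N}_\lambda$ in the first case, and by Proposition~\ref{D-N-lemma} in the second.

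First I would upgrade the hypothesis to $u\in L^\infty((0,t^+)\times\Omega)$. From the identity $u=\mathcal{R}_\lambda(f(u),u_\Gamma) =\mathcal{R}_\lambda(f(u),0)+\mathcal{R}_\lambda(0,u_\Gamma)$ and a standard weak maximum principle for the Dirichlet problem $\lambda u+\mathcal{A}u=f(u)$ with $u|_\Gamma=u_\Gamma$, together with the global Lipschitz estimate $|f(u)|\leq c_f|u|+|f(0)|$, one obtains
\[
\|u(t,\cdot)\|_{L^\infty(\Omega)}\leq \|u_\Gamma(t,\cdot)\|_{L^\infty(\Gamma)}+\lambda^{-1}\bigl(c_f\|u(t,\cdot)\|_{L^\infty(\Omega)}+\|f(0)\|_{L^\infty}\bigr),
\]
and for $\lambda\geq\lambda_\ast$ sufficiently large this can be solved to yield $\|u\|_{L^\infty((0,t^+)\times\Omega)}\leq M'$.

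Next I would control the forcing term in~\eqref{abs-semi}. Since $f(u)$ is bounded in $L^\infty(\Omega)\hookrightarrow L^p(\Omega)$, Lemma~\ref{Diri-lemma} with $\theta=1$ gives $\mathcal{R}_\lambda(f(u),0)$ bounded in $H^{2,p}(\Omega)=W^{2,p}(\Omega)$, so that $d\partial_\nu\mathcal{R}_\lambda(f(u),0)$ is bounded in $W^{1-1/p,p}(\Gamma)\hookrightarrow L^p(\Gamma)$; meanwhile $g(u_\Gamma)$ is bounded in $L^\infty(\Gamma)\hookrightarrow L^p(\Gamma)$. Hence the full right-hand side $F(\tau):=g(u_\Gamma(\tau))-d\partial_\nu\mathcal{R}_\lambda(f(u(\tau)),0)$ is uniformly bounded in $L^p(\Gamma)$ for $\tau\in(0,t^+)$.

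Fix an auxiliary time $s_0\in(0,t^+)$. Smoothness from Proposition~\ref{classic-diff} ensures $u_\Gamma(s_0)\in\mathcal{X}_\delta$. Writing the variation of constants representation
\[
u_\Gamma(t)=e^{-A(t-s_0)}u_\Gamma(s_0)+\int_{s_0}^{t}e^{-A(t-\tau)}F(\tau)\,d\tau,\qquad t\in[s_0,t^+),
\]
the first term is bounded in $\mathcal{X}_\delta$ uniformly in $t\in[s_0,t^+)$ by boundedness of $(e^{-At})_{t\in[0,t^+]}$ on $\mathcal{X}_\delta$. For the convolution integral I would choose $\alpha\in(1-\tfrac{1}{p},1)$ and invoke the analytic semigroup estimate $\|A^\alpha e^{-A\sigma}\|_{\mathcal{L}(L^p(\Gamma))}\leq C_\alpha\sigma^{-\alpha}$ to obtain
\[
\Bigl\|A^\alpha\int_{s_0}^{t}e^{-A(t-\tau)}F(\tau)\,d\tau\Bigr\|_{L^p(\Gamma)}\leq C_\alpha\|F\|_{L^\infty(s_0,t^+;L^p(\Gamma))}\int_{0}^{t^+-s_0}\sigma^{-\alpha}\,d\sigma,
\]
which is finite since $\alpha<1$. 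This bounds the convolution in $D(A^\alpha)$, and by complex interpolation $D(A^\alpha)\hookrightarrow W^{2\alpha,p}(\Gamma)$ when $\delta>0$ (since $D(A)=W^{2,p}(\Gamma)$) and $D(A^\alpha)\hookrightarrow W^{\alpha,p}(\Gamma)$ when $\delta=0$ (since $D(A)=W^{1,p}(\Gamma)$). In either case, the choice $\alpha>1-\tfrac{1}{p}$ yields $D(A^\alpha)\hookrightarrow\mathcal{X}_\delta$, providing the desired uniform $\mathcal{X}_\delta$-bound on $u_\Gamma$ as $t\to t^+$, contradicting blow-up.

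The main obstacle is the careful identification of $D(A^\alpha)$ with an embedded Sobolev space on $\Gamma$ in both regimes and the verification that $\alpha$ can be chosen strictly less than $1$ while still capturing the phase space; this is where the different scaling of $\mathcal{C}$ (order two) and $\mathcal{N}_\lambda$ (order one) forces the distinction $\mathcal{X}_\delta=W^{2-2/p,p}$ versus $W^{1-1/p,p}$ but leaves the same inequality $\alpha>1-1/p$ to be satisfied.
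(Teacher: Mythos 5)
Your proposal is correct and follows the same skeleton as the paper's proof: contradiction via the blow-up alternative, a uniform $L^p(\Gamma)$-bound on the inhomogeneity $g(u_\Gamma)-d\partial_\nu\mathcal{R}_\lambda(f(u),0)$ of \eqref{abs-semi}, and then the variation of constants formula with analytic-semigroup smoothing (this is exactly what the paper imports by citing the estimate of \cite[Proposition 3.2.1]{CD}, which you instead write out via fractional powers). The one genuine divergence is how the nonlocal coupling term is controlled. The paper does \emph{not} first bound $u$ in $L^\infty(\Omega)$; it estimates $\|d\partial_\nu\mathcal{R}_\lambda(f(\mathcal{D}_\lambda(u_\Gamma)),0)\|_{L^p(\Gamma)}\leq C_\eta\|u_\Gamma\|_{W^{\eta,p}(\Gamma)}$ using the global Lipschitz continuity of $\mathcal{D}_\lambda$ (Lemma \ref{Diri-nonlinear-lemma}), and then absorbs this term via the interpolation inequality and Young's inequality, exploiting that $\|u_\Gamma\|_{L^p(\Gamma)}$ is bounded by hypothesis. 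You instead invoke a maximum principle for the elliptic equation to upgrade the hypothesis to $u\in L^\infty((0,t^+)\times\Omega)$, which makes the forcing bounded outright and removes the need for absorption. Your route is arguably more transparent but uses the sign structure of $\lambda+\mathcal{A}-f$ (valid here since $\lambda>c_f$ and $d$ is scalar, but less robust, e.g. for systems), whereas the paper's functional-analytic absorption argument uses only the Lipschitz estimates already established. One small technical caveat in your write-up: identifying $D(A^\alpha)$ with $W^{2\alpha,p}(\Gamma)$ (resp. $W^{\alpha,p}(\Gamma)$) by complex interpolation presupposes bounded imaginary powers; without that you only get $D(A^\alpha)\hookrightarrow(L^p,D(A))_{\beta,p}$ for every $\beta<\alpha$, which still suffices because the constraint $\beta>1-1/p$ leaves room below $\alpha<1$ — worth stating precisely, since this embedding is the crux of the smoothing step.
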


\begin{proof}
Suppose $t^+ < \infty$. We show $u_\Gamma \in L^\infty(0,t^+; \mathcal{X}
_\delta)$ to derive a contradiction. In both cases $\delta \geq \delta_*$
and $\delta \equiv 0$, for $T< t^+$ we may use the variation of constants
formula to estimate as in the proof of \cite[Proposition 3.2.1]{CD}, 
\begin{align}
\sup_{t\in [0,T]} \|u_\Gamma(t)\|_{\mathcal{X}_\delta} &\, \leq C_{t^+}\big (%
1 + \sup_{t\in [0,T]} \|g(u_\Gamma(t))\|_{L^p(\Gamma)}  \notag \\
&\,\qquad \qquad \qquad \quad + \sup_{t\in [0,T]} \|d \partial_\nu \mathcal{R%
}_\lambda(f(\mathcal{D}_\lambda(u_\Gamma(t))),0)\|_{L^p(\Gamma)}\big).
\label{333}
\end{align}
By assumption, the second summand is bounded independent of $T < t^+$. For
the third summand we have by Lemma \ref{Diri-nonlinear-lemma} that 
\begin{align*}
\|d \partial_\nu \mathcal{R}_\lambda(f(\mathcal{D}_\lambda(u_\Gamma(t))),0)%
\|_{L^p(\Gamma)} &\, \leq C\|f(\mathcal{D}_\lambda(u_\Gamma(t)))\|_{L^p(%
\Omega)} \leq C_\eta \|u_\Gamma(t)\|_{W^{\eta,p}(\Gamma)},
\end{align*}
where $\eta > 0$ is small. Given $\varepsilon >0$, it follows from the
interpolation inequality and Young's inequality that 
\begin{equation*}
\|u_\Gamma(t)\|_{W^{\eta,p}(\Gamma)} \leq \varepsilon \|u_\Gamma(t)\|_{%
\mathcal{X}_\delta} + C_\eps \|u_\Gamma(t)\|_{L^p(\Gamma)} \leq \varepsilon
\sup_{t\in [0,T]} \|u_\Gamma(t)\|_{\mathcal{X}_\delta} + C_\eps.
\end{equation*}
For sufficiently small $\varepsilon$ we may absorb $\varepsilon \sup_{t\in
[0,T]} \|u_\Gamma(t)\|_{\mathcal{X}_\delta}$ into the left-hand side of %
\eqref{333}. We thus find a bound for $\sup_{t\in [0,T]} \|u_\Gamma(t)\|_{%
\mathcal{X}_\delta}$ that is independent of $T < t^+$. Hence $u_\Gamma \in
L^\infty(0,t^+; \mathcal{X} _\delta)$.
\end{proof}

\begin{remark}
It follows from the proof above that if $g$ grows asymptotically at most
polynomial, then $u_{\Gamma }\in L^{\infty }( 0,t^{+}; L^q(\Gamma))$ for
sufficiently large $q<\infty$ is already sufficient for global existence.
\end{remark}

Before continuing we need the following inequality of Poincar\'e-Young type.

\begin{lemma}
\label{poincare} For all $p\in (1,\infty)$ and $\varepsilon \in (0,1)$ there
is $\tau >0$ such that 
\begin{equation}
\Vert u\Vert _{L^{p}(\Gamma )}\leq \varepsilon \Vert \nabla u\Vert
_{L^{p}(\Omega )}+\varepsilon ^{-\tau }\Vert u\Vert _{L^{1}(\Gamma )},\qquad 
\text{for all }u\in W^{1,p}(\Omega ).  \label{interi}
\end{equation}
\end{lemma}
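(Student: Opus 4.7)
The plan is to interpolate the $L^{p}(\Gamma)$-norm between $L^{1}(\Gamma)$ and some higher-exponent Lebesgue space $L^{q}(\Gamma)$ with $q>p$ that is controlled by the trace of $W^{1,p}(\Omega)$, then to replace the resulting $W^{1,p}(\Omega)$-norm by $\|\nabla u\|_{L^{p}(\Omega)}+\|u\|_{L^{1}(\Gamma)}$ via a Poincar\'e--trace inequality, and finally to turn the multiplicative bound into the desired additive form through a weighted Young inequality with a small parameter.

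First, pick $q\in(p,\infty)$ sufficiently close to $p$ so that the trace theorem $W^{1,p}(\Omega)\hookrightarrow W^{1-1/p,p}(\Gamma)$ combined with Sobolev's embedding on the compact $(n-1)$-dimensional manifold $\Gamma$ yields $\|u\|_{L^{q}(\Gamma)}\le C\,\|u\|_{W^{1,p}(\Omega)}$. The classical three-line interpolation inequality on Lebesgue spaces then gives
\[
\|u\|_{L^{p}(\Gamma)}\;\le\;\|u\|_{L^{1}(\Gamma)}^{1-\theta}\,\|u\|_{L^{q}(\Gamma)}^{\theta}\;\le\;C_{0}\,\|u\|_{L^{1}(\Gamma)}^{1-\theta}\,\|u\|_{W^{1,p}(\Omega)}^{\theta},
\]
with $\theta=\frac{q(p-1)}{p(q-1)}\in(0,1)$.

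Second, I would establish the Poincar\'e--trace inequality
\[
\|u\|_{W^{1,p}(\Omega)}\;\le\;C_{1}\bigl(\|\nabla u\|_{L^{p}(\Omega)}+\|u\|_{L^{1}(\Gamma)}\bigr)
\]
by a standard Rellich-type contradiction argument. If it failed, one could extract $u_{n}\in W^{1,p}(\Omega)$ with $\|u_{n}\|_{W^{1,p}(\Omega)}=1$, $\|\nabla u_{n}\|_{L^{p}(\Omega)}\to 0$, and $\|u_{n}\|_{L^{1}(\Gamma)}\to 0$; by Rellich--Kondrachov a subsequence converges strongly in $L^{p}(\Omega)$, and compactness of the trace $W^{1,p}(\Omega)\to W^{1-1/p,p}(\Gamma)\hookrightarrow L^{1}(\Gamma)$ forces the boundary traces to converge to zero in $L^{1}(\Gamma)$. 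The limit is then a constant function with vanishing boundary $L^{1}$-norm, hence identically zero, contradicting the unit-norm normalisation.

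Finally, substituting the Poincar\'e--trace estimate into the interpolation bound, using $(a+b)^{\theta}\le a^{\theta}+b^{\theta}$, and applying the weighted Young inequality at the conjugate exponents $1/\theta$ and $1/(1-\theta)$ with weight $\eta>0$, one arrives at
\[
\|u\|_{L^{p}(\Gamma)}\;\le\;\eta\,\|\nabla u\|_{L^{p}(\Omega)}+C\,\eta^{-\theta/(1-\theta)}\|u\|_{L^{1}(\Gamma)},
\]
where $C=C(\Omega,p)$. Setting $\eta=\varepsilon$ and then slightly enlarging the exponent $\theta/(1-\theta)$ --- which is permissible since $\varepsilon\in(0,1)$ makes $\varepsilon^{-s}$ strictly increasing in $s$ --- to absorb the constant $C$ yields the claimed bound with some $\tau=\tau(p,\varepsilon)>0$. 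The main nontrivial ingredient is the Poincar\'e--trace inequality of the second step; although standard, it relies on the compactness of the trace map, which in turn follows from the continuous trace into $W^{1-1/p,p}(\Gamma)$ together with the compact embedding $W^{1-1/p,p}(\Gamma)\hookrightarrow L^{1}(\Gamma)$.
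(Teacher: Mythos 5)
Your proof is correct, but it takes a genuinely different route from the paper's. The paper argues directly by scaling and contradiction: it normalizes $\Vert u\Vert_{L^p(\Gamma)}=1$, assumes the inequality fails with $\tau=k$ for every $k$, uses a Poincar\'e inequality from the literature to see that $\Vert\nabla u\Vert_{L^p(\Omega)}+\Vert u\Vert_{L^p(\Gamma)}$ is an equivalent norm on $W^{1,p}(\Omega)$ (so the resulting sequence is bounded there), and then uses compactness of the trace into $L^p(\Gamma)$ and $L^1(\Gamma)$ to extract a limit with $\Vert u\Vert_{L^p(\Gamma)}=1$ and $\Vert u\Vert_{L^1(\Gamma)}=0$, a contradiction. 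You instead build the estimate constructively: Lyapunov interpolation of $L^p(\Gamma)$ between $L^1(\Gamma)$ and $L^q(\Gamma)$ (with $q>p$ reachable by the trace and Sobolev embeddings), a Poincar\'e--trace inequality $\Vert u\Vert_{W^{1,p}(\Omega)}\le C(\Vert\nabla u\Vert_{L^p(\Omega)}+\Vert u\Vert_{L^1(\Gamma)})$ proved by the usual Rellich contradiction, and finally Young's inequality with exponents $1/\theta$, $1/(1-\theta)$. Both arguments ultimately rest on compactness of the trace operator, so neither is more elementary, but yours is quantitatively sharper: it exhibits an admissible exponent $\tau$ essentially equal to $\theta/(1-\theta)$, uniform for $\varepsilon$ bounded away from $1$, whereas the paper's contradiction argument is run for a fixed $\varepsilon$ and gives no control on $\tau$. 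This uniformity is in fact what is implicitly used later in \eqref{moser5}, where the lemma is invoked with $\varepsilon=\delta 2^{-k}$ for all $k$ and a single $\tau$, so your version is arguably better adapted to the application. The only cosmetic points: the interpolation inequality you use is H\"older/Lyapunov rather than the three-lines theorem, and one should note explicitly that $\Omega$ is connected so that the zero-gradient limit in your second step is a constant.
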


\begin{proof}
\emph{Step 1.} We use the Poincar\'e inequality proved in \cite[Lemma 3.1]%
{RT01} to estimate 
\begin{align*}
\|u\|_{L^p(\Omega)} \leq \|u - \frac{1}{|\Gamma|} \int_\Gamma u
\|_{L^p(\Omega)} + C\|u\|_{L^p(\Gamma)} \leq C\big (\|\nabla
u\|_{L^p(\Omega)} + \|u\|_{L^p(\Gamma)}\big).
\end{align*}
Thus $\|\nabla u\|_{L^p(\Omega)} + \|u\|_{L^p(\Gamma)}$ is an equivalent
norm on $W^{1,p}(\Omega)$.

\emph{Step 2.} By a scaling argument it suffices to prove the inequality for 
$\|u\|_{L^p(\Gamma)} = 1$. Suppose that there is no $\tau >0$ such that the
inequality holds for a given $\varepsilon\in (0,1)$. Then for any $k\in 
\mathbb{N}$ there is $u_k \in W^{1,p}(\Omega)$ such that 
\begin{equation*}
\|u_k\|_{L^p(\Gamma)} = 1 \geq \varepsilon \|\nabla u_k\|_{L^p(\Omega)} +
\varepsilon^{-k} \|u_k\|_{L^1(\Gamma)}.
\end{equation*}
It follows from this inequality and Step 1 that the resulting sequence $%
(u_k) $ is bounded in $W^{1,p}(\Omega)$. Since the trace operator is a
compact map from $W^{1,p}(\Omega)$ into $L^p(\Gamma)$ and into $L^1(\Gamma)$%
, we find a subsequence, again denoted by $(u_k)$, that converges in $%
L^p(\Gamma)$ and in $L^1(\Gamma)$ to some limit $u$. By assumption we have $%
\|u\|_{L^p(\Gamma)} = 1$. On the other hand, the inequality shows that $%
\|u_k\|_{L^1(\Gamma)} \leq \varepsilon^k$ for all $k$, such that $%
\|u\|_{L^1(\Gamma)} = 0$ and thus $u|_\Gamma = 0$. This is a contradiction.
\end{proof}

We verify an $L^\infty(\Gamma)$-bound for solutions of %
\eqref{ell-dyn-classic} under the assumption that 
\begin{equation}
g(\xi )\xi \leq c_{g}( \xi ^{2}+1 ) \quad \text{ for all }\xi \in \mathbb{R},
\label{sign-cond}
\end{equation}
where $c_{g}$ is a nonnegative constant. Observe that this sign condition
complements the sufficient condition from Proposition \ref{blow-up-prop} for
blow-up.

\begin{proposition}
\label{gl_st} Let \eqref{classical-assum} hold, and assume \eqref{sign-cond}%
. Then for all $u_0 \in \mathcal{X}_\delta$ the classical solution of %
\eqref{ell-dyn-classic} exists globally in time, i.e., $t^{+}=\infty$.
\end{proposition}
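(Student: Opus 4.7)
The plan is first to invoke Lemma \ref{ge2}, which reduces global existence to establishing an $L^\infty$-bound for $u_\Gamma$ on $(0,T)\times\Gamma$ for every finite $T<t^+$. The strategy, foreshadowed in the introduction and patterned on \cite{Gal0, Mey10}, is a Moser--Alikakos iteration on the coupled system, in which Lemma \ref{poincare} plays the role that Sobolev's inequality plays in the purely interior setting.

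The starting point is an $L^p$ energy identity for $p\geq 2$. By Proposition \ref{classic-diff} the solution is smooth for $t>0$, so one may test the elliptic equation with $|u|^{p-2}u$ on $\Omega$ and the boundary equation with $|u_\Gamma|^{p-2}u_\Gamma$ on $\Gamma$. Adding the two, the fluxes $\int_\Gamma d\,\partial_\nu u\cdot|u_\Gamma|^{p-2}u_\Gamma$ cancel, and using $|u|^{p-2}|\nabla u|^2=\tfrac{4}{p^2}|\nabla|u|^{p/2}|^2$ (and analogously on $\Gamma$) together with $|f(u)|\leq c_f|u|+|f(0)|$, the sign condition \eqref{sign-cond}, and Young's inequality for the constant-order remainders, one obtains
\begin{align*}
\tfrac{1}{p}\tfrac{d}{dt}\|u_\Gamma\|_{L^p(\Gamma)}^p
&+ \tfrac{c(p-1)}{p^2}\bigl(\|\nabla|u|^{p/2}\|_{L^2(\Omega)}^2 + \|\nabla_\Gamma|u_\Gamma|^{p/2}\|_{L^2(\Gamma)}^2\bigr) \\
&+ (\lambda-c_f)\|u\|_{L^p(\Omega)}^p \;\leq\; c_g\|u_\Gamma\|_{L^p(\Gamma)}^p + K_p,
\end{align*}
provided $\lambda>c_f$, with $K_p$ gathering harmless $L^{p-1}(\Omega)$ and $L^{p-2}(\Gamma)$ contributions that Young's inequality handles.

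To kill the bad boundary term $c_g\|u_\Gamma\|_{L^p(\Gamma)}^p$ on the right-hand side I would apply Lemma \ref{poincare} to $|u|^{p/2}\in W^{1,2}(\Omega)$, giving
\begin{equation*}
\|u_\Gamma\|_{L^p(\Gamma)}^p = \||u|^{p/2}\|_{L^2(\Gamma)}^2 \leq 2\eps^2\|\nabla|u|^{p/2}\|_{L^2(\Omega)}^2 + 2\eps^{-2\tau}\|u_\Gamma\|_{L^{p/2}(\Gamma)}^p,
\end{equation*}
and choosing $\eps$ of order $1/\sqrt{p}$ so that the gradient contribution is absorbed on the left. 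This produces a differential inequality of the form $\tfrac{d}{dt}\|u_\Gamma\|_{L^p(\Gamma)}^p \leq A_p\bigl(1+\|u_\Gamma\|_{L^{p/2}(\Gamma)}^p\bigr)$, with $A_p$ growing only polynomially in $p$. Integrating on $[0,T]$ and iterating with $p_k=2^k$ gives, for $M_k:=\sup_{[0,T]}\|u_\Gamma(t)\|_{L^{p_k}(\Gamma)}$, a Moser--Alikakos recursion
\begin{equation*}
M_k \leq \|u_0\|_{L^{p_k}(\Gamma)} + (A_{p_k}T)^{1/p_k}\bigl(1+M_{k-1}\bigr),
\end{equation*}
the base case $p_0=2$ being covered by the standard energy estimate. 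Since $A_p=O(p^\alpha)$, the series $\sum_k p_k^{-1}\log(A_{p_k}T)$ converges and so does $\prod_k (A_{p_k}T)^{1/p_k}$; moreover $\|u_0\|_{L^{p_k}(\Gamma)}\to\|u_0\|_{L^\infty(\Gamma)}$ because $\mathcal{X}_\delta\hookrightarrow C(\Gamma)$ in view of the assumption $p>n$. Hence $M_k$ stays bounded uniformly in $k$, yielding $\|u_\Gamma\|_{L^\infty((0,T)\times\Gamma)}<\infty$ for every $T<t^+$, and Lemma \ref{ge2} then forces $t^+=\infty$.

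The main obstacle is the polynomial bookkeeping of the $p$-dependent constants through the repeated applications of Young's and Poincaré--Young's inequalities: the explicit $\eps^{-\tau}$ blow-up in Lemma \ref{poincare}, as opposed to a generic interpolation estimate, is exactly what permits the scale-dependent optimization $\eps=\eps(p)$ needed to keep $A_p$ algebraic in $p$, and hence to make the iteration converge rather than diverge.
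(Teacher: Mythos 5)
Your proposal is correct and follows essentially the same route as the paper: reduce to an $L^\infty(\Gamma)$-bound via Lemma \ref{ge2}, test both equations with $|u|^{2^k-2}u$ so that the flux terms cancel, absorb the bad boundary term into the interior gradient term via Lemma \ref{poincare} with $\varepsilon$ chosen dyadically small, and run a Moser--Alikakos iteration with base case $p=2$ handled by Gronwall. The only cosmetic difference is that you track the $p$-dependent constants explicitly through the recursion, whereas the paper delegates this to a cited standard iteration lemma.
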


\begin{proof}
We suppose that $t^+ <\infty$ and show $u_\Gamma \in L^\infty((0,t^+)\times
\Gamma)$ to derive a contradiction to Lemma \ref{ge2}.

\emph{Step 1.} Let $T < t^+$. By an iteration argument we will first show
that 
\begin{equation}
\Vert u_\Gamma \Vert _{L^{\infty }((0,T)\times \Gamma )}\leq C\,\max \left(
\Vert u_{0}\Vert _{L^{\infty }(\Gamma )},\Vert u_\Gamma \Vert _{L^{\infty
}(0,T;L^{2}(\Gamma ))}\right),  \label{moser}
\end{equation}%
where $C$ is independent of $u_\Gamma$ and $T$. Let $k\in \mathbb{N}$, fix $%
t\in (0,T)$ and write $u=u_\Gamma = u(t,\cdot )$. We multiply the equation
on $\Gamma $ by $u^{2^{k}-1}$ and integrate by parts on $\Gamma $ to obtain 
\begin{align*}
\frac{d}{dt}\int_{\Gamma }u^{2^{k}}dS& \,=-(2^{k}-1)2^{2-k} \int_{\Gamma
}\delta |\nabla _{\Gamma }\left( u^{2^{k-1}}\right) |^{2}dS \\
& \qquad + 2^k \int_{\Gamma }g(u)u^{2^{k}-1}dS-2^{k}\int_{\Gamma }d \partial
_{\nu }u u^{2^{k}-1}dS.
\end{align*}%
Multiplying the equation on $\Omega $ by $u^{2^{k}-1}$ gives 
\begin{align*}
-2^{k}d\int_{\Gamma }\partial _{\nu }u u^{2^{k}-1}dS & \, =
-(2^{k}-1)2^{2-k}\int_{\Omega }d |\nabla \left( u^{2^{k-1}}\right) |^{2}dx \\
&\, \qquad + 2^{k}\int_{\Omega }\left( f(u)u^{2^{k}-1}-\lambda
u^{2^{k}}\right) dx.
\end{align*}%
Using $-(2^{k}-1)2^{2-k}\leq -2$, that $f$ is globally Lipschitzian and that 
$\lambda\geq c_f$, we obtain 
\begin{align}
\frac{d}{dt}\int_{\Gamma }u^{2^{k}}dS& \,\leq -2d_* \int_{\Omega } |\nabla
\left( u^{2^{k-1}}\right) |^{2}dx  \notag \\
& \,\qquad \quad +2^{k}\int_{\Omega }\left( f(u)u^{2^{k}-1}-\lambda
u^{2^{k}}\right) dx+2^{k}\int_{\Gamma }g(u)u^{2^{k}-1}dS  \notag \\
&\, \leq - 2 d_*\int_{\Omega }|\nabla \left( u^{2^{k-1}}\right) |^{2}dx +C
\,2^{k} \int_{\Gamma}u^{2^{k}}dS + C\, 2^k.  \label{5bis}
\end{align}
Given $\varepsilon >0$, it follows from Lemma \ref{poincare} that there is $%
\tau >1$ such that 
\begin{equation*}
- \int_\Omega |\nabla v|^2\, dx \leq - \varepsilon^{-1} \int_\Gamma v^2 \,
dS + \varepsilon^{-\tau} \left ( \int_\Gamma |v| \, d S\right)^2.
\end{equation*}
Choosing $\varepsilon = \delta 2^{-k}$ with sufficiently small $\delta >0$,
we obtain that 
\begin{equation}  \label{moser5}
\frac{d}{dt}\int_{\Gamma }u^{2^{k}}\, dS \leq - 2^k \int_{\Gamma }u^{2^{k}}
\, d S + C 2^{k\tau}\int_{\Gamma }u^{2^{k-1}} \, d S + C 2^{k}, \qquad k\in 
\mathbb{N}.
\end{equation}
Now \eqref{moser} follows from a standard Moser-Alikakos iteration procedure
as presented e.g. in \cite[Proposition 9.3.1]{CD} (see also \cite[Lemma 5.5.3%
]{Mey10}).

\emph{Step 2.} Set $\varphi=\|u_\Gamma\|_{L^2(\Gamma)}^2$. Employing %
\eqref{5bis} with $k=1$, we get $\varphi^{\prime}\leq C_{1}\varphi +C_{2}$,
which we can integrate to 
\begin{equation*}
\varphi \left( t\right) \leq C_{1}\int_{0}^{t}\varphi (s)ds+\left(
tC_{2}+\varphi (0)\right) ,\qquad t\in (0,T).
\end{equation*}%
Thus, by Gronwall's inequality, 
\begin{equation*}
\|u_\Gamma\|_{L^2(\Gamma)}^2 \leq \left( tC_{2}+\int_{\Gamma
}u_{0}^{2}dS\right) e^{C_{1}t},\qquad t\in (0,T).
\end{equation*}%
Hence $u_\Gamma \in L^\infty(0,t^+; L^2(\Gamma))$, and therefore $u_\Gamma
\in L^\infty((0,t^+) \times \Gamma )$ by \eqref{moser}.
\end{proof}

Combining the Propositions \ref{blow-up-prop} and \ref{gl_st} gives the
following.

\begin{theorem}
Let \eqref{classical-assum} hold, assume $d \equiv d_*$ and $\delta \equiv
\delta_*\geq 0$, and that 
\begin{equation*}
g(\xi) \sim \rho |\xi|^{q-1} \xi \qquad \text{as }|\xi|\to \infty,
\end{equation*}
for some $\rho\in \mathbb{R}$ and $q>0$. Then for all $u_0 \in \mathcal{X}%
_\delta$ the problem \eqref{ell-dyn-classic} has a unique global classical
solution if and only if either $\rho \leq 0$ or $q\leq 1$.
\end{theorem}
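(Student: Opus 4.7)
The statement is a biconditional between a property of the parameters $(\rho, q)$ and global existence for \emph{every} $u_0 \in \mathcal{X}_\delta$. I will prove the two implications separately, and both will reduce to the results already established earlier in the section.

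For sufficiency ($\rho \leq 0$ or $q\leq 1 \Rightarrow$ global existence), the plan is to verify the sign condition $g(\xi)\xi \leq c_g(\xi^{2}+1)$ of \eqref{sign-cond} and then invoke Proposition \ref{gl_st}. From the asymptotics $g(\xi) \sim \rho|\xi|^{q-1}\xi$, one has $g(\xi)\xi \sim \rho|\xi|^{q+1}$ as $|\xi|\to\infty$. If $\rho\leq 0$, this expression is bounded above by a constant on the regime where the asymptotic is valid, and continuity of $g$ provides a bound $g(\xi)\xi \leq c_g(\xi^{2}+1)$ for all $\xi$. If instead $q\leq 1$, then $q+1\leq 2$ and $|g(\xi)\xi| \leq C|\xi|^{q+1} \leq C(\xi^{2}+1)$ for $|\xi|$ large, with a similar constant‐adjustment to cover all $\xi$. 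In either case \eqref{sign-cond} holds, so Proposition \ref{gl_st} yields $t^{+}(u_0)=\infty$ for every $u_0\in \mathcal{X}_\delta$.

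For necessity ($\rho>0$ and $q>1 \Rightarrow$ existence of a blowing-up solution), the plan is to invoke Proposition \ref{blow-up-prop}. Its hypothesis requires $g(\xi)/\xi^{q'} \to +\infty$ as $\xi \to +\infty$ for some $q'>1$. Pick any $q'$ with $1 < q' < q$. Then
\begin{equation*}
\frac{g(\xi)}{\xi^{q'}} = \frac{g(\xi)}{\rho\,\xi^{q}}\cdot \rho\,\xi^{q-q'} \longrightarrow +\infty \qquad (\xi \to +\infty),
\end{equation*}
since the first factor tends to $1$ by the asymptotic assumption and the second factor tends to $+\infty$ because $\rho>0$ and $q>q'$. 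Thus Proposition \ref{blow-up-prop} applies with $q'$ in place of $q$, producing a constant $C>0$ such that any initial datum $u_0\in \mathcal{X}_\delta$ with $u_0 \geq C$ yields a solution that blows up in finite time. Such initial data clearly exist in $\mathcal{X}_\delta$ (e.g. a suitable constant function, which belongs to both $W^{2-2/p,p}(\Gamma)$ and $W^{1-1/p,p}(\Gamma)$), contradicting global existence for all $u_0$.

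Combining the two implications gives the claimed equivalence. The arguments are essentially bookkeeping on top of the two main tools already proved, namely Proposition \ref{gl_st} for global existence under the sign condition and Proposition \ref{blow-up-prop} for blow-up from large data; there is no substantial obstacle beyond verifying that the asymptotic hypothesis on $g$ triggers the right hypothesis in each of these propositions.
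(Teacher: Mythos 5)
Your proposal is correct and follows exactly the paper's route: the paper derives this theorem by "combining Propositions \ref{blow-up-prop} and \ref{gl_st}", which is precisely the reduction you carry out, with the verification of \eqref{sign-cond} for the sufficiency direction and of the hypothesis $g(\xi)/\xi^{q'}\to+\infty$ (for some $1<q'<q$) for the blow-up direction being the only bookkeeping needed. No discrepancies to report.
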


As for blow-up, we refine the sufficient conditions on $g$ for global
existence in case of the Laplace equation. We argue as in \cite[Theorem 6.1]%
{CE02}, where the case $\delta \equiv 0$ was considered. The condition below
complements the one of Proposition \ref{blow-up-prop-2}.

\begin{proposition}
Let \eqref{classical-assum} hold, assume $d \equiv d_*$ and $\delta \equiv
\delta_*\geq 0$, and that $|g|\leq \gamma$, where $\gamma \in C(\mathbb{R},
(0,\infty))$ is such that 
\begin{equation*}
\int_0^\infty \frac{ds}{\gamma(s)} = \int_{-\infty}^0 \frac{ds}{\gamma(s)} =
\infty.
\end{equation*}
Then for all $u_0 \in \mathcal{X}_\delta$ the problem \eqref{blow-up-2} has
a unique global classical solution.
\end{proposition}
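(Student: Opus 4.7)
The plan is to trap $u$ between two spatially constant super- and subsolutions obtained by solving the scalar ODEs $M' = \gamma(M)$ and $m' = -\gamma(m)$, invoke Lemma~\ref{comparison}, and then apply Lemma~\ref{ge2} to rule out finite-time blow-up. The argument mirrors Proposition~\ref{blow-up-prop-2} but exploits the opposite sign in the growth estimate.

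First I would set $K := \|u_0\|_{L^{\infty}(\Gamma)}$, which is finite because $p > n$ guarantees $\mathcal{X}_\delta \hookrightarrow C(\Gamma)$. The integral assumptions on $\gamma$, together with separation of variables, yield unique $C^1$ solutions of $M' = \gamma(M)$, $M(0) = K$ and $m' = -\gamma(m)$, $m(0) = -K$ defined on all of $[0,\infty)$, with $m(t) \leq -K$ and $M(t) \geq K$ for every $t \geq 0$. Reading these as constant-in-space functions $\bar M(t,x) := M(t)$ and $\bar m(t,x) := m(t)$, one has $\Delta \bar M = \Delta \bar m = 0$, $\partial_\nu \bar M = \partial_\nu \bar m = 0$, and $\Delta_\Gamma \bar M = \Delta_\Gamma \bar m = 0$. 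Consequently, on $\Gamma$,
\[
\partial_t \bar M - \delta \Delta_\Gamma \bar M + d \partial_\nu \bar M = M'(t) = \gamma(M) \geq g(\bar M),
\]
using $|g| \leq \gamma$, and the reverse inequality holds for $\bar m$. Since $\bar m|_{t=0} = -K \leq u_0 \leq K = \bar M|_{t=0}$ on $\Gamma$, one obtains a classical super- and a subsolution of \eqref{blow-up-2}.

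Since $f \equiv 0$ satisfies $|f'| \leq c_f = 0$ and $\lambda = 0 \geq c_f$, Lemma~\ref{comparison} applies once the required regularity of the local solution $u$ is checked. For $t > 0$ this is delivered by Proposition~\ref{classic-diff}; continuity up to $t=0$ comes from Proposition~\ref{semilinear} or Proposition~\ref{escher-results} (depending on whether $\delta \geq \delta_\ast$ or $\delta \equiv 0$) together with the embedding of the relevant state space into $C(\overline\Omega)$, which holds for $p > n$. Lemma~\ref{comparison} then gives $m(t) \leq u(t,x) \leq M(t)$ on $[0,T] \times \overline\Omega$ for every $T < t^+$.

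If $t^+ < \infty$, the boundedness of $m$ and $M$ on $[0,t^+]$ forces $u_\Gamma \in L^{\infty}((0,t^+) \times \Gamma)$, and then Lemma~\ref{ge2} yields $t^+ = \infty$, a contradiction; uniqueness is part of the local well-posedness. There is no real obstacle in the argument; the only minor point to verify is that the specialization $\lambda = 0$, $f \equiv 0$ is compatible with the standing assumption "$\lambda \geq \lambda_\ast$ sufficiently large"---but here $c_F = 0$ and the Dirichlet problem for $-\text{div}(d\nabla\,\cdot\,)$ is uniquely solvable, so one may take $\lambda_\ast = 0$ and all earlier results apply verbatim.
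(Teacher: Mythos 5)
Your proof is correct, but it follows a different route from the paper's. The paper works directly with the extrema $m(t)=\min_{x}u(t,x)$ and $M(t)=\max_{x}u(t,x)$ of the solution itself: the strong maximum principle and the Hopf lemma force the extrema onto $\Gamma$ with a sign for $\partial_\nu u$ there, the a.e.-differentiability result of Constantin--Escher for $t\mapsto\max_x u(t,x)$ gives $\partial_t M\leq\gamma(M)$ and $\partial_t m\geq-\gamma(m)$, and the integral condition on $\gamma$ is then used as in \cite[Theorem 3.1]{CE02} to exclude finite-time blow-up. You instead build explicit spatially homogeneous barriers by solving $M'=\gamma(M)$, $m'=-\gamma(m)$ (globally solvable and unique by separation of variables, precisely because of the two integral conditions and $\gamma>0$), sandwich $u$ via Lemma \ref{comparison}, and finish with Lemma \ref{ge2}. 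The two arguments are close in spirit --- both ultimately rest on the maximum principle and the Hopf lemma, which in your version are hidden inside the proof of Lemma \ref{comparison} --- but yours has the merit of being self-contained within the paper (it parallels the proof of Proposition \ref{blow-up-prop-2} and avoids importing the differentiability lemma \cite[Theorem 2.2]{CE02}), at the price of slightly cruder bounds ($u$ is trapped between the ODE barriers rather than its own extrema, which is immaterial here). Your side remarks are also the right ones to make: the regularity needed for Lemma \ref{comparison} is supplied by Proposition \ref{classic-diff} together with continuity up to $t=0$ from Propositions \ref{semilinear}/\ref{escher-results} and $p>n$, and the specialization $\lambda=0$, $f\equiv 0$ is harmless since $c_f=0$ and $\lambda_D=0$ for $-d_*\Delta$.
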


\begin{proof}
Suppose that $t^+ < \infty$, and let $\xi(t)$, $\zeta(t)\in \overline{\Omega}
$ be such that 
\begin{equation*}
m(t):= \min_{x\in \overline{\Omega}} u(t,x) = u(t,\xi(t)), \qquad M(t):=
\max_{x\in \overline{\Omega}} u(t,x) = u(t,\zeta(t)).
\end{equation*}
It follows from \cite[Theorem 3.5]{GT} that for each $t\in (0,t^+)$ we have $%
\xi(t), \zeta(t) \in \Gamma$. Thus $\partial_\nu u(t,\xi(t)) < 0$ and $%
\partial_\nu u(t,\zeta(t)) > 0$ by \cite[Lemma 3.4]{GT}. By \cite[Theorem 2.2%
]{CE02}, the function $m$ is almost everywhere differentiable on $(0,t^+)$
with $\partial_t m = (\partial_t u)(t,\xi(t)).$ Using that $\Delta_\Gamma
u(t,\xi(t)) \geq 0$, which can be seen as in the proof of Lemma \ref%
{comparison}, we get 
\begin{align*}
\partial_t m(t) = \delta \Delta_\Gamma u(t,\xi(t)) - \partial_\nu
u(t,\xi(t)) + g( u(t,\xi(t))) \geq - \gamma ( m(t))
\end{align*}
for a.e. $t\in (0,t^+)$. In the same way we obtain $\partial_t M(t) \leq
\gamma ( M(t))$ for a.e. $t\in (0,t^+).$ Now the very same arguments as in
the proof of \cite[Theorem 3.1]{CE02} provide a contradiction to the
assumption $t^+ < \infty$.
\end{proof}

%

\subsection{Global attractors}

\label{attractors} Suppose that \eqref{classical-assum} and \eqref{sign-cond}
hold true. Then by the above results, \eqref{ell-dyn-classic} generates a
compact global solution semiflow 
\begin{equation*}
S_{\delta }(t;u_{0}):=u(t;u_{0})
\end{equation*}%
of smooth solutions in the phase space $\mathcal{X}_{\delta }$. Let $%
F^{\prime }=f$ and $G^{\prime }=g$. Then we may differentiate 
\begin{equation*}
\mathcal{E}(u):=\frac{1}{2}\int_{\Omega }d |\nabla u|^{2}\,dx+\frac{1}{2}%
\int_{\Gamma }\delta |\nabla _{\Gamma }u_{\Gamma }|^{2}\,dS-\int_{\Omega
}(F(u)-\frac{\lambda }{2}u^{2})\,dx-\int_{\Gamma }G(u_{\Gamma })\,dS
\end{equation*}%
with respect to time, to obtain 
\begin{equation}
\partial _{t}\mathcal{E}(u)=-\Vert \partial _{t}u_{\Gamma }\Vert
_{L^{2}(\Gamma )}^{2}.  \label{identity}
\end{equation}%
Thus $\mathcal{E}$ is a strict Lyapunov function for \eqref{ell-dyn-classic}%
, and the problem is of gradient structure. By \cite[Corollary 1.1.7]{CD},
for the existence of a global attractor it is left to show the boundedness
of the set of equilibria $E$ of \eqref{ell-dyn-classic}. To formulate a
sufficient condition for this, we note that by the global Lipschitz
continuity of $f$ there is a constant $\tilde{c}_{f}\in \mathbb{R}$ such
that 
\begin{equation}
f(\xi )\xi \leq \tilde{c}_{f}(\xi ^{2}+1),\qquad \xi \in \mathbb{R}.
\label{c_ftilde}
\end{equation}%
For the boundedness of the equilibria the parameters of the problem should
satisfy 
\begin{equation}
\frac{\Vert \sqrt{d}\nabla \psi \Vert _{L^{2}(\Omega )}^{2}+\Vert \sqrt{%
\delta }\nabla _{\Gamma }\psi \Vert _{L^{2}(\Gamma )}^{2}-(\tilde{c}%
_{f}-\lambda )\Vert \psi \Vert _{L^{2}(\Omega )}^{2}-c_{g}\Vert \psi \Vert
_{L^{2}(\Gamma )}^{2}}{\Vert \psi \Vert _{L^{2}(\Gamma )}^{2}}\geq \eta >0,
\label{rayleigh}
\end{equation}%
for all $\Theta _{\delta }:=\left\{ \psi \in W^{1,2}(\Omega )\,:\, \psi
|_{\Gamma }\in W^{1,2}(\Gamma )\text{ if }\delta \geq \delta _{\ast
}\right\} $.

\begin{lemma}
\label{equi-bounded} Assume \eqref{classical-assum}, \eqref{sign-cond}, %
\eqref{c_ftilde} and \eqref{rayleigh}. Then the set of equilibria $E\subset
C^\infty(\overline{\Omega})$ of \eqref{ell-dyn-classic} is bounded in $%
W^{2,p}(\Gamma)$ for $\delta\geq \delta_*$ and it is bounded in $%
W^{1,p}(\Gamma)$ for $\delta \equiv 0$.
\end{lemma}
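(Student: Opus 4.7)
The plan is a three-step strategy: derive an $L^2(\Gamma)$-bound on equilibria via the Rayleigh-type condition \eqref{rayleigh}, upgrade to $L^\infty$ via a stationary Moser iteration, and finally bootstrap to the claimed regularity using elliptic regularity.

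For Step 1, let $u$ be an equilibrium; since equilibria lie in $C^\infty(\overline{\Omega})$ (Proposition \ref{classic-diff}), all integrations by parts are justified. Testing the elliptic equation by $u$ and the boundary equation by $u_\Gamma$ and adding produces matching fluxes $\pm\int_\Gamma d\partial_\nu u\cdot u_\Gamma$ that cancel, giving
\begin{equation*}
\lambda \|u\|_{L^2(\Omega)}^2 + \|\sqrt d\,\nabla u\|_{L^2(\Omega)}^2 + \|\sqrt\delta\,\nabla_\Gamma u_\Gamma\|_{L^2(\Gamma)}^2 = \int_\Omega f(u)u\,dx + \int_\Gamma g(u_\Gamma)u_\Gamma\,dS.
\end{equation*}
Bounding the right-hand side with \eqref{c_ftilde} and \eqref{sign-cond} and rearranging, the left-hand side becomes exactly the numerator of the Rayleigh quotient in \eqref{rayleigh} evaluated at $\psi=u\in\Theta_\delta$. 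Thus \eqref{rayleigh} yields $\eta\|u_\Gamma\|_{L^2(\Gamma)}^2\leq \tilde c_f|\Omega|+c_g|\Gamma|$, a bound independent of the chosen equilibrium.

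For Step 2, the Moser-Alikakos iteration from the proof of Proposition \ref{gl_st} carries over: the identity $\frac{d}{dt}\int_\Gamma u^{2^k}\,dS$ simply vanishes for equilibria, so the recursion \eqref{moser5} becomes an algebraic inequality that, together with Lemma \ref{poincare}, produces the stationary analog of \eqref{moser}, namely $\|u_\Gamma\|_{L^\infty(\Gamma)}\leq C\max(\|u_\Gamma\|_{L^2(\Gamma)},1)\leq C$. A weak maximum principle for the elliptic equation (valid because $\lambda>c_f$) then gives a uniform bound $\|u\|_{L^\infty(\Omega)}\leq C$ as well.

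For Step 3, $f(u)$ and $g(u_\Gamma)$ are now uniformly bounded in $L^\infty$. In the case $\delta\geq\delta_*$, $u_\Gamma$ is uniformly bounded in $L^\infty(\Gamma)\hookrightarrow L^p(\Gamma)$; a Hölder/Schauder bootstrap lifts $u_\Gamma$ into $W^{2-2/p,p}(\Gamma)$, and Lemma \ref{Diri-nonlinear-lemma} then gives $u$ uniformly in $H^{2-1/p,p}(\Omega)$, hence $d\partial_\nu u$ uniformly in $L^p(\Gamma)$. Viewing the boundary equation as $-\text{div}_\Gamma(\delta\nabla_\Gamma u_\Gamma)=g(u_\Gamma)-d\partial_\nu u$ with $L^p(\Gamma)$ right-hand side and invoking the stationary version of Theorem \ref{thm1} closes the loop and yields $u_\Gamma$ uniformly in $W^{2,p}(\Gamma)$. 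In the case $\delta\equiv 0$, the Neumann-type system with bounded data can be bootstrapped via Agmon-Douglis-Nirenberg and De~Giorgi-Nash estimates to give $u$ uniformly in $W^{2,p}(\Omega)$, hence $u_\Gamma$ in $W^{2-1/p,p}(\Gamma)\hookrightarrow W^{1,p}(\Gamma)$.

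The main obstacle is the regularity bootstrap in Step 3 for the purely reactive case, where $L^\infty$ boundary data does not sit in the natural scale $W^{1-1/p,p}(\Gamma)$ required for $W^{2,p}(\Omega)$-regularity; the De Giorgi-Nash step producing a uniform $C^\alpha$-bound is what unlocks the bootstrap. Uniformity across the family of equilibria is automatic, since every constant appearing depends only on $\lambda, c_f, \tilde c_f, c_g, \eta$ and on the geometric/coefficient data $d, \delta, \Omega$.
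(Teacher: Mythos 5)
Your Steps 1 and 2 coincide with the paper's argument: testing the stationary system with $u$ and invoking \eqref{rayleigh} gives the uniform $L^2(\Gamma)$-bound, and the stationary Moser--Alikakos recursion obtained from \eqref{moser5} with vanishing time derivative gives the uniform $L^\infty(\Gamma)$-bound \eqref{Li}. The problem is Step 3, and specifically the reactive-diffusive case $\delta\geq\delta_*$. You correctly identify the circular structure --- to bound $u_\Gamma$ in $W^{2,p}(\Gamma)$ via the surface equation you need $\partial_\nu u$ in $L^p(\Gamma)$, which needs $u$ bounded in $H^{2-1/p,p}(\Omega)$, which by Lemma \ref{Diri-nonlinear-lemma} needs $u_\Gamma$ bounded in $W^{2-2/p,p}(\Gamma)$ --- but the phrase ``a H\"older/Schauder bootstrap lifts $u_\Gamma$ into $W^{2-2/p,p}(\Gamma)$'' is precisely the missing step, not a proof of it. From $L^\infty(\Gamma)$ alone one cannot start such a ladder: $L^\infty(\Gamma)$ embeds into no $W^{s,p}(\Gamma)$ with $s>0$, and a De Giorgi--Nash estimate only yields $C^\alpha$ with some small $\alpha$, which for large $p$ does not reach $W^{2-2/p,p}(\Gamma)$, nor even the threshold $s>1+1/p$ needed for an $L^p(\Gamma)$ normal derivative of $\mathcal{D}_\lambda(u_\Gamma)$. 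The paper closes the loop differently and more cheaply: since each equilibrium is individually smooth (Proposition \ref{classic-diff}), every norm in the chain is finite, so one may interpolate $\Vert u_\Gamma\Vert_{W^{2-2/p,p}(\Gamma)}\leq\varepsilon\Vert u_\Gamma\Vert_{W^{2,p}(\Gamma)}+C_\varepsilon\Vert u_\Gamma\Vert_{L^p(\Gamma)}$, feed this back into the chain $\Vert u_\Gamma\Vert_{W^{2,p}(\Gamma)}\leq C(1+\Vert u\Vert_{H^{2-1/p,p}(\Omega)})\leq C(1+\Vert u_\Gamma\Vert_{W^{2-2/p,p}(\Gamma)})$, and absorb the $\varepsilon$-term into the left-hand side; all constants then depend only on the uniform $L^\infty(\Gamma)$-bound. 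You should replace the unspecified bootstrap by this absorption argument (or else supply an actual regularity ladder for the stationary Venttsel-type problem, which is considerably more work).

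Your route for $\delta\equiv 0$ (De Giorgi--Nash for a uniform $C^\alpha(\overline{\Omega})$-bound, then elliptic regularity for the Neumann problem) can be made to work, but as written it also has a hole: a single $C^\alpha$-bound does not place the Neumann datum $g(u_\Gamma)$ in $W^{1-1/p,p}(\Gamma)$ for large $p$, so it does not yet ``unlock'' $W^{2,p}(\Omega)$; one needs an intermediate Schauder step giving $u\in C^{1,\alpha}(\overline{\Omega})$ uniformly before concluding. The paper again avoids classical regularity theory entirely: it estimates $\Vert u_\Gamma\Vert_{W^{1,p}(\Gamma)}$ by $\Vert\mathcal{N}_\lambda u_\Gamma\Vert_{L^p(\Gamma)}$ via the Dirichlet--Neumann operator, bounds the latter by $C(1+\Vert u\Vert_{L^p(\Omega)})\leq C(1+\Vert u_\Gamma\Vert_{W^{1-1/p,p}(\Gamma)})$, and absorbs by interpolation exactly as above. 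The extra claim in your Step 2 (the interior $L^\infty(\Omega)$-bound via the maximum principle) is true but not needed for the statement.
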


\begin{proof}
\emph{Step 1.} Note that indeed $E\subset C^{\infty }(\overline{\Omega })$
by Proposition \ref{classic-diff}. Thus an equilibrium $u$ satisfies 
\begin{equation}
\lambda u-\text{div}(d\nabla u)=f(u)\quad \text{in }\Omega ,\qquad -\text{div%
}_{\Gamma }(\delta \nabla _{\Gamma }u)+d\partial _{\nu }u=g(u_{\Gamma
})\quad \text{on }\Gamma .  \label{equi}
\end{equation}%
Multiplying by $u$, integrating by parts and using \eqref{sign-cond}, %
\eqref{c_ftilde} and \eqref{rayleigh}, we get 
\begin{align*}
C& \,\geq \Vert \sqrt{d}\nabla u\Vert _{L^{2}(\Omega )}^{2}+\Vert \sqrt{%
\delta }\nabla _{\Gamma }u_{\Gamma }\Vert _{L^{2}(\Gamma )}^{2}-(\tilde{c}%
_{f}-\lambda )\Vert u\Vert _{L^{2}(\Omega )}^{2}-c_{g}\Vert u_{\Gamma }\Vert
_{L^{2}(\Gamma )}^{2} \\
& \,\geq \eta \Vert u_{\Gamma }\Vert _{L^{2}(\Gamma )}^{2},
\end{align*}%
with a constant $C$ independent of $u$. Hence $\sup_{u\in E}\Vert u_{\Gamma
}\Vert _{L^{2}(\Gamma )}<\infty $. Next we obtain from \eqref{moser5} that
there are $C,\tau >0$ such that 
\begin{equation*}
\int_{\Gamma }u_{\Gamma }^{2^{k}}\,dS\leq 2^{k\tau }\int_{\Gamma }u_{\Gamma
}^{2^{k-1}}\,dS+C
\end{equation*}%
for all $u\in E$ and $k\in \mathbb{N}$. Hence, by an iteration argument, 
\begin{equation*}
\Vert u_{\Gamma }\Vert _{L^{\infty }(\Gamma )}\leq C\big (1+\Vert
u_{\Gamma}\Vert _{L^{2}(\Gamma )}\big).
\end{equation*}
Therefore 
\begin{equation}
\sup_{u\in E}\Vert u_{\Gamma }\Vert _{L^{\infty }(\Gamma )}<\infty .
\label{Li}
\end{equation}

\emph{Step 2.} Suppose that $\delta \geq \delta _{\ast }$. Then for $u\in E$%
, \eqref{Li} gives 
\begin{align*}
\Vert u_{\Gamma }\Vert _{W^{2,p}(\Gamma )}& \,\leq C\big(\Vert u_{\Gamma
}\Vert _{L^{p}(\Gamma )}+\Vert \Delta u_{\Gamma }\Vert _{L^{p}(\Gamma )}\big)
\\
& \,\leq C\big(1+\Vert g(u_{\Gamma })\Vert _{L^{p}(\Gamma )}+\Vert \partial
_{\nu }u\Vert _{L^{p}(\Gamma )}\big)\leq C(1+\Vert u\Vert
_{H^{2-1/p,p}(\Omega )}\big).
\end{align*}%
Recall that $u=\mathcal{D}_{\lambda }(u_{\Gamma })$, where $\mathcal{D}%
_{\lambda }:W^{2-2/p,p}(\Gamma )\rightarrow H^{2-1/p,p}(\Omega )$ is
globally Lipschitzian by Lemma \ref{Diri-nonlinear-lemma}. Using the
interpolation inequality, Young's inequality and \eqref{Li}, for arbitrary $%
\varepsilon >0$ we get 
\begin{equation*}
\Vert u\Vert _{H^{2-1/p,p}(\Omega )}\leq C\big (1+\Vert u_{\Gamma }\Vert
_{W^{2-2/p,p}(\Gamma )}\big)\leq \varepsilon \Vert u_{\Gamma }\Vert
_{W^{2,p}(\Gamma )}+C_{\varepsilon},
\end{equation*}%
where $C_{\varepsilon }$ does not depend on $u\in E$. For small $\varepsilon 
$ we can thus absorb $\varepsilon \Vert u_{\Gamma }\Vert _{W^{2,p}(\Gamma )}$
into the left-hand side of the previous inequality to obtain $\sup_{u\in
E}\Vert u_{\Gamma }\Vert _{W^{2,p}(\Gamma )}<\infty $.

\emph{Step 3.} Now let $\delta \equiv 0$. Then for $u\in E$ we have by Lemma %
\ref{Diri-lemma} that 
\begin{align*}
\|u_\Gamma\|_{W^{1,p}(\Gamma)} &\, \leq C\big( \|u_\Gamma\|_{L^p(\Gamma)} +
\|\mathcal{N}_\lambda u_\Gamma \|_{L^p(\Gamma)}\big) \\
&\, \leq C\big(1 + \|\partial_\nu \mathcal{R}_\lambda
(f(u),0)\|_{L^p(\Gamma)}\big) \leq C(1 + \|u\|_{L^p(\Omega)}\big).
\end{align*}
Since $\|u\|_{L^p(\Omega)} \leq C\big (1+ \|u_\Gamma\|_{W^{1-1/p,p}(\Gamma)}%
\big )$ by Lemma \ref{Diri-nonlinear-lemma}, we may argue as above to obtain 
$\sup_{u\in E} \|u_\Gamma\|_{W^{1,p}(\Gamma)} < \infty$.
\end{proof}

Under the above assumptions it now follows from \cite[Corollary 1.1.7]{CD}
that the semiflow $S_{\delta }$ generated by \eqref{ell-dyn-classic} has a
global attractor $\mathcal{A}_{\delta }.$ To verify that $\mathcal{A}%
_{\delta }$ has finite Hausdorff dimension, we need the following.

\begin{lemma}
\label{diff-semiflow-reactive} Assume \eqref{classical-assum} and %
\eqref{sign-cond}. Then for each $t >0$ the time $t$ map $S_\delta(t;\cdot)$
belongs to $C^\infty(\mathcal{X }_\delta)$, and the derivative $D_2
S_\delta(t;\cdot)$ is compact on $\mathcal{X }_\delta$.
\end{lemma}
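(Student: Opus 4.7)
The plan is to recast \eqref{ell-dyn-classic} as the abstract semilinear Cauchy problem \eqref{abs-semi} on $L^p(\Gamma)$ and to invoke standard smooth-dependence-on-data results for parabolic evolution equations with analytic generators, e.g. \cite[Theorems 2.4.4 and 3.4.1]{CD}. Writing $A:=\mathcal{C}+\mathcal{N}_\lambda$ (with the convention $\mathcal{C}\equiv 0$ when $\delta\equiv 0$) and
\[
H(u_\Gamma):=g(u_\Gamma)-d\partial_\nu\mathcal{R}_\lambda\bigl(f(\mathcal{D}_\lambda(u_\Gamma)),0\bigr),
\]
the trace satisfies $\partial_t u_\Gamma+A u_\Gamma=H(u_\Gamma)$, $u_\Gamma(0)=u_0$. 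Theorem \ref{thm1} and Proposition \ref{D-N-lemma}, together with the fact that $\mathcal{N}_\lambda$ is lower order relative to $\mathcal{C}$ when $\delta\ge\delta_\ast$, ensure that $-A$ generates an analytic $C_0$-semigroup on $L^p(\Gamma)$ with domain $D(A)=W^{2,p}(\Gamma)$ in the diffusive case and $D(A)=W^{1,p}(\Gamma)$ in the purely reactive case; in either case $\mathcal{X}_\delta$ is an intermediate space between $D(A)$ and $L^p(\Gamma)$.

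The first step is to show that $H\in C^\infty(\mathcal{X}_\delta,L^p(\Gamma))$. The map $\mathcal{D}_\lambda$ is smooth $\mathcal{X}_\delta\to H^{2-1/p,p}(\Omega)$ by Lemma \ref{Diri-nonlinear-lemma}, the composition with $f$ is smooth into $L^p(\Omega)$ because $p>n$ forces $H^{2-1/p,p}(\Omega)\hookrightarrow C(\overline\Omega)$, and $\mathcal{R}_\lambda(\cdot,0)$ is a bounded linear map into $H^{2,p}(\Omega)$ by Lemma \ref{Diri-lemma}, whose normal trace lies in $W^{1-1/p,p}(\Gamma)\hookrightarrow L^p(\Gamma)$. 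The Nemytskij term $g(u_\Gamma)$ is handled analogously using $\mathcal{X}_\delta\hookrightarrow C(\Gamma)$. Applying the abstract smooth-dependence theorem then yields $S_\delta(t;\cdot)\in C^\infty(\mathcal{X}_\delta)$, with derivative $v(t):=D_2 S_\delta(t;u_0)v_0$ solving the linearization
\[
\partial_t v+A v=H'(u_\Gamma(\cdot;u_0))v,\qquad v(0)=v_0,
\]
represented by the Duhamel formula $v(t)=e^{-tA}v_0+\int_0^t e^{-(t-s)A}H'(u_\Gamma(s))v(s)\,ds$.

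For compactness of $D_2 S_\delta(t;\cdot)$ on $\mathcal{X}_\delta$, the key observation is parabolic smoothing: for any $t>0$, $e^{-tA}$ is bounded $L^p(\Gamma)\to D(A)$, hence also bounded $\mathcal{X}_\delta\to D(A)$. Since the embedding $D(A)\hookrightarrow\mathcal{X}_\delta$ is compact by Rellich--Kondrachov (both $W^{2,p}(\Gamma)\hookrightarrow W^{2-2/p,p}(\Gamma)$ and $W^{1,p}(\Gamma)\hookrightarrow W^{1-1/p,p}(\Gamma)$ are compact), the operator $e^{-tA}$ is compact on $\mathcal{X}_\delta$ for $t>0$. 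The Duhamel integral is treated by approximating $\int_0^t$ by $\int_0^{t-\varepsilon}$; the truncated integral is a compact operator because its range lies in $D(A)$ with a uniform bound (the integrand is uniformly bounded in $\mathcal{X}_\delta$ by local Lipschitz estimates on $H'$ along the trajectory $u_\Gamma$), while the tail $\int_{t-\varepsilon}^t$ has $\mathcal{L}(\mathcal{X}_\delta)$-norm $O(\varepsilon^\alpha)$ by the standard analytic-semigroup bounds. Hence $D_2 S_\delta(t;u_0)$ is a norm limit of compact operators, thus compact.

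The main obstacle is bookkeeping the smoothness of $H$ through the non-linear composition $\mathcal{D}_\lambda\circ(\cdot)$ and verifying that the hypotheses of the abstract $C^\infty$ semiflow theorem (local Lipschitzness of $H$ and all its derivatives from $\mathcal{X}_\delta$ to $L^p(\Gamma)$, together with maximal regularity for $A$) are met uniformly in the two regimes $\delta\ge\delta_\ast$ and $\delta\equiv 0$; once this is done, the compactness argument is essentially a direct consequence of the smoothing of the analytic semigroup combined with the Rellich compactness of $D(A)\hookrightarrow\mathcal{X}_\delta$.
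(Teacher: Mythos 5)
Your proof is correct, and the smoothness half is essentially the paper's argument: rewrite \eqref{ell-dyn-classic} as the abstract semilinear problem \eqref{abs-semi}, check that the right-hand side $H$ is $C^\infty$ from $\mathcal{X}_\delta$ to $L^p(\Gamma)$ (via Lemma \ref{Diri-nonlinear-lemma} and the smoothness of the superposition operators, using $p>n$ for the embeddings into $C$), and invoke an abstract smooth-dependence theorem (the paper cites \cite[Corollary 3.4.5]{Henry} rather than \cite{CD}, but this is immaterial). Where you genuinely diverge is the compactness of $D_2S_\delta(t;\cdot)$. The paper's argument is a one-liner: $S_\delta(t;\cdot)$ is already known to be a compact map by Theorem \ref{thm-ell-dyn} and Proposition \ref{compact-reactive}, and the Fr\'echet derivative of a compact map is a compact linear operator. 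You instead argue directly on the linearized equation via the Duhamel formula, combining the smoothing $e^{-tA}:L^p(\Gamma)\to D(A)$ with the compact embedding $D(A)\hookrightarrow\mathcal{X}_\delta$ and approximating the convolution integral by truncation; the tail estimate $O(\varepsilon^{1-\beta})$ works because $\|e^{-sA}\|_{\mathcal{L}(L^p(\Gamma),\mathcal{X}_\delta)}\lesssim s^{-\beta}$ with $\beta=1-1/p<1$, and the a priori bound $\sup_{s\le t}\|v(s)\|_{\mathcal{X}_\delta}\lesssim\|v_0\|_{\mathcal{X}_\delta}$ follows from a singular Gronwall inequality. Both routes are valid; the paper's is shorter because it recycles work already done for the nonlinear semiflow, while yours is self-contained at the linear level, does not presuppose compactness of the nonlinear map, and yields a quantitative rate for the approximation by compact operators.
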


\begin{proof}
Recall that \eqref{ell-dyn-classic} may be rewritten into the form %
\eqref{abs-semi}. The superposition operator induced by $g$ belongs to $%
C^\infty(W^{s,p}(\Gamma), L^p(\Gamma))$ for all $s > \frac{n-1}{p}$. By
Lemma \ref{Diri-nonlinear-lemma}, the same is true for $u_\Gamma \mapsto
d\partial_\nu \mathcal{R}_\lambda(f(\mathcal{D}_\lambda(u_\Gamma)),0).$
Therefore $S_\delta(t; \cdot)$ is smooth on $\mathcal{X}_\delta $ by e.g. 
\cite[Corollary 3.4.5]{Henry}. Since $S_\delta(t;\cdot)$ is a compact map by
Theorem \ref{thm-ell-dyn} and Proposition \ref{compact-reactive}, also $D_2
S_\delta(t;\cdot)$ is compact.
\end{proof}

Since the global attractor $\mathcal{A}_{\delta }$ is by definition
invariant under $S_{\delta }(1;\cdot )$, it is a consequence of \cite[%
Chapter V, Theorem 3.2]{T} that $\mathcal{A}_{\delta }$ has finite Hausdorff
dimension.

We summarize the the results of this subsection as follows.

\begin{theorem}
\label{maingl} Assume \eqref{classical-assum}, \eqref{sign-cond}, %
\eqref{c_ftilde} and \eqref{rayleigh}. Then the solution semiflow $S_{\delta
}$ on $\mathcal{X}_{\delta }$ for \eqref{ell-dyn-classic} has a global
attractor $\mathcal{A}_{\delta }$, which is of finite Hausdorff dimension
and coincides with the unstable set of equilibria.
\end{theorem}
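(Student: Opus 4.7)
The plan is to assemble the theorem from the pieces already assembled in the subsection, since the heavy lifting has been done by the preceding lemmas and propositions. There are three assertions to establish: existence of a global attractor $\mathcal{A}_\delta$, its characterization as the unstable set of equilibria, and finiteness of its Hausdorff dimension.

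First, I would verify the hypotheses of \cite[Corollary 1.1.7]{CD}, which is the abstract theorem on global attractors for gradient systems cited in the text. Under \eqref{classical-assum} and \eqref{sign-cond}, Proposition \ref{gl_st} gives global existence, so $S_\delta$ is a \emph{global} semiflow on $\mathcal{X}_\delta$; Theorem \ref{thm-ell-dyn} (for $\delta \geq \delta_*$) and Proposition \ref{compact-reactive} (for $\delta \equiv 0$) give compactness of $S_\delta(t;\cdot)$ for $t>0$; Proposition \ref{classic-diff} gives smoothness needed for the Lyapunov identity \eqref{identity} to be justified via integration by parts on $\Omega$ and $\Gamma$. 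Together with $F'=f$, $G'=g$, this identity shows that $\mathcal{E}$ is a strict Lyapunov functional, i.e.~$\mathcal{E}$ decreases strictly along nonconstant trajectories, with the equality $\partial_t \mathcal{E}(u)=0$ forcing $\partial_t u_\Gamma \equiv 0$ and hence $u \in E$. Under the spectral condition \eqref{rayleigh}, Lemma \ref{equi-bounded} provides boundedness of $E$ in $\mathcal{X}_\delta$. Applying the cited corollary yields existence of the global attractor $\mathcal{A}_\delta$ and simultaneously its identification with the unstable set $W^u(E)$ of equilibria, which is the standard conclusion of the gradient-system attractor theorem.

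For the Hausdorff dimension, the plan is to invoke \cite[Chapter V, Theorem 3.2]{T}. The attractor $\mathcal{A}_\delta$ is invariant under the time-one map $S_\delta(1;\cdot)$; by Lemma \ref{diff-semiflow-reactive}, $S_\delta(1;\cdot) \in C^\infty(\mathcal{X}_\delta)$ with compact Fr\'echet derivative $D_2 S_\delta(1;u_0)$ at every $u_0 \in \mathcal{X}_\delta$. The compactness of the derivative implies that its sequence of singular values (eigenvalues of its polar decomposition) decays to $0$, so for some $N$ the uniform $N$-th exterior power contraction rate on $\mathcal{A}_\delta$ is less than $1$. This is precisely the hypothesis of Temam's dimension estimate, yielding $\dim_H \mathcal{A}_\delta \le N < \infty$.

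The only place that requires slight care is the strict-Lyapunov step: one must know that trajectories are regular enough that \eqref{identity} holds pointwise in $t>0$ and that $\partial_t u_\Gamma \equiv 0$ implies $u$ is independent of $t$. The first point is given by Proposition \ref{classic-diff}; the second then identifies such $u$ with a solution of the stationary problem \eqref{equi}, completing the reduction to the set $E$. I expect no further obstacles, since the two abstract results cited above are the standard tools for precisely this package of hypotheses; the real mathematical work of this theorem has been done in Lemma \ref{equi-bounded} (boundedness of equilibria via the Moser iteration and the spectral gap \eqref{rayleigh}) and in Lemma \ref{diff-semiflow-reactive} (smooth dependence on data with compact linearization).
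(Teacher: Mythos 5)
Your proposal is correct and follows essentially the same route as the paper: the text of Section \ref{attractors} assembles the theorem exactly as you do, citing \cite[Corollary 1.1.7]{CD} for existence of the attractor and its identification with the unstable set of equilibria (using the Lyapunov identity \eqref{identity} and Lemma \ref{equi-bounded}), and \cite[Chapter V, Theorem 3.2]{T} together with Lemma \ref{diff-semiflow-reactive} and invariance of $\mathcal{A}_\delta$ under $S_\delta(1;\cdot)$ for the finite Hausdorff dimension. Your added remarks on the strict-Lyapunov step and the singular values of the compact derivative are consistent glosses on the same argument rather than a different approach.
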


\begin{remark}
Another (more indirect) way to prove that $\mathcal{A}_{\delta }$ has finite
fractal dimension is to establish the existence of a more refined object
called exponential attractor $\mathcal{E}_{\delta }$, whose existence proof
is based on the so-called smoothing property for the differences of any two
solutions. This can be easily carried out in light of the assumptions for $%
f,g$, and the smoothness both in space and time for the solutions of (\ref%
{ell-dyn-classic}) (see Proposition \ref{classic-diff}, and Lemma \ref%
{uniform2} below). It is also worth mentioning that the above result also
holds for less regular functions in \eqref{classical-assum}.
\end{remark}

We conclude with a result that states a necessary and sufficient condition
such that \eqref{rayleigh} is satisfied. To this purpose, consider the
(self-adjoint) eigenvalue problem (see, e.g., \cite[Theorem 2]{VV09}) 
\begin{equation}
\left( \lambda -\widetilde{c}_{f}\right) \varphi -\text{div}\left( d\nabla
\varphi \right) =0\qquad \text{ in }\Omega ,  \label{wi}
\end{equation}%
with a boundary condition that depends on the eigenvalue $\xi $ explicitly,%
\begin{equation}
-\text{div}_{\Gamma }\left( \delta \nabla _{\Gamma }\varphi \right)
+d\partial _{\nu }\varphi -c_{g}\varphi =\xi \varphi \qquad \text{ on }%
\Gamma .  \label{wib1}
\end{equation}

\begin{proposition}
\label{1st}Let $\delta \geq 0$. Then inequality (\ref{rayleigh}) is
satisfied if and only if the first eigenvalue $\xi _{1}^{\delta }=\xi
_{1}^{\delta }\left( \Omega ,\widetilde{c}_{f},c_{g}\right) $ of (\ref{wi})-(%
\ref{wib1}) is positive.
\end{proposition}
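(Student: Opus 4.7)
The plan is to recognize that the left-hand side of (\ref{rayleigh}) is precisely the Rayleigh quotient attached to the Stekloff-type eigenvalue problem (\ref{wi})--(\ref{wib1}), so that the claim reduces to the variational characterization of its first eigenvalue.

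First I would derive the weak formulation. Multiplying (\ref{wi}) by an arbitrary test function $\psi\in\Theta_\delta$, integrating over $\Omega$ and applying Green's identity yields
\begin{equation*}
(\lambda-\tilde c_f)\int_\Omega \varphi\psi\,dx + \int_\Omega d\,\nabla\varphi\cdot\nabla\psi\,dx - \int_\Gamma d\,\partial_\nu\varphi\,\psi\,dS=0.
\end{equation*}
Substituting the boundary condition (\ref{wib1}) for $d\partial_\nu\varphi$ and integrating by parts the resulting surface divergence term (or, when $\delta\equiv 0$, omitting this step) produces the bilinear form
\begin{equation*}
a(\varphi,\psi):=\int_\Omega d\,\nabla\varphi\cdot\nabla\psi\,dx+\int_\Gamma \delta\,\nabla_\Gamma\varphi\cdot\nabla_\Gamma\psi\,dS+(\lambda-\tilde c_f)\int_\Omega\varphi\psi\,dx-c_g\int_\Gamma\varphi\psi\,dS,
\end{equation*}
so that (\ref{wi})--(\ref{wib1}) is equivalent to the generalized eigenvalue problem $a(\varphi,\psi)=\xi\,\langle\varphi,\psi\rangle_{L^2(\Gamma)}$ for all $\psi\in\Theta_\delta$. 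The key observation is that $a(\psi,\psi)$ is exactly the numerator of the Rayleigh quotient appearing in (\ref{rayleigh}).

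Second, I would invoke the standard min-max principle for this self-adjoint problem (as formulated in \cite{VV09}, Theorem 2) to write
\begin{equation*}
\xi_1^\delta=\inf_{\psi\in\Theta_\delta,\ \psi|_\Gamma\not\equiv 0}\frac{a(\psi,\psi)}{\|\psi\|_{L^2(\Gamma)}^2}.
\end{equation*}
With this identity the equivalence is immediate: if $\xi_1^\delta>0$ then the quotient is bounded below by $\xi_1^\delta$, which is (\ref{rayleigh}) with $\eta=\xi_1^\delta$; conversely, if (\ref{rayleigh}) holds with some $\eta>0$, taking the infimum over $\psi\in\Theta_\delta$ with nontrivial trace gives $\xi_1^\delta\geq\eta>0$.

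The only nontrivial point is to justify that the variational characterization of $\xi_1^\delta$ is available in the phase space $\Theta_\delta$ for both regimes $\delta\geq\delta_*$ and $\delta\equiv 0$. This requires that $a$ be continuous on $\Theta_\delta$ (clear from the trace theorem) and that the generalized eigenvalue problem admit a discrete spectrum accumulating only at $+\infty$; the case $\delta\geq\delta_*$ is covered by \cite{VV09}, while the case $\delta\equiv 0$ reduces to the classical Stekloff problem perturbed by lower-order terms, to which the same compactness argument applies by the compact embedding $\operatorname{tr}:W^{1,2}(\Omega)\hookrightarrow L^2(\Gamma)$. Given these facts, the proof of the proposition consists essentially of the computation above.
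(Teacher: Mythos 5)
Your proposal is correct and follows essentially the same route as the paper: the paper simply quotes the variational (Rayleigh quotient) characterization of $\xi_1^{\delta}$ from \cite[Section 6]{VV09} and observes that the equivalence with (\ref{rayleigh}) is immediate, which is exactly your second step. Your additional derivation of the weak formulation and the remarks on discreteness of the spectrum in both regimes are a harmless elaboration of what the paper delegates to the citation.
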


\begin{proof}
We have that (see \cite[Section 6]{VV09}) 
\begin{equation*}
\xi _{1}^{\delta }=\inf_{\psi \in \Theta _{\delta },\psi \neq 0}\frac{\Vert 
\sqrt{d}\nabla \psi \Vert _{L^{2}(\Omega )}^{2}+\Vert \sqrt{\delta }\nabla
_{\Gamma }\psi \Vert _{L^{2}(\Gamma )}^{2}-(\tilde{c}_{f}-\lambda )\Vert
\psi \Vert _{L^{2}(\Omega )}^{2}-c_{g}\Vert \psi \Vert _{L^{2}(\Gamma )}^{2}%
}{\Vert \psi \Vert _{L^{2}(\Gamma )}^{2}}\;,
\end{equation*}
from which the assertion immediately follows.
\end{proof}

\begin{remark}
It is easy to see that if $\lambda >\widetilde{c}_{f}$ and $%
c_{g}<C_{P}=C_{P}\left( \Omega ,d,\lambda ,\widetilde{c}_{f}\right) $, where 
$C_{P}>0$ is the best constant in the following Poincar\'{e}-Sobolev type
inequality%
\begin{equation*}
C_{P}\Vert \psi \Vert _{L^{2}(\Gamma )}^{2}\leq \Vert \sqrt{d}\nabla \psi
\Vert _{L^{2}(\Omega )}^{2}+(\lambda -\widetilde{c}_{f})\Vert \psi \Vert
_{L^{2}(\Omega )}^{2},
\end{equation*}%
then we always have $\xi _{1}^{\delta }>0$.
\end{remark}

\subsection{Convergence to single equilibria}

\label{cte}

We shall finally be concerned with the asymptotic behavior of single
trajectories. We first give sufficient conditions where a single homogeneous
equilibrium is approached exponentially fast by every solution with respect
to the $L^{2}(\Gamma )$-norm.

\begin{proposition}
\label{hom_e}Assume \eqref{classical-assum} and that $f^{\prime }\leq \tilde{%
c}_{f}$, $g^{\prime }\leq c_{g}$ for $\tilde{c}_{f},c_{g}\in \mathbb{R}$,
such that \eqref{rayleigh} is valid. If \eqref{ell-dyn-classic} has a
homogeneous equilibrium $u_{\ast }\in \mathbb{R}$, then for all $u_{0}\in 
\mathcal{X}_{\delta }$ we have 
\begin{equation*}
\Vert u(t;u_{0})-u_{\ast }\Vert _{L^{2}(\Gamma )}\leq e^{-2\eta t}\Vert
u_{0}-u_{\ast }\Vert _{L^{2}(\Gamma )},\qquad t>0.
\end{equation*}
\end{proposition}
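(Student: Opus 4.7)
\medskip

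The strategy is an $L^2(\Gamma)$-energy estimate for the difference $w := u - u_{\ast}$. Since $u_{\ast} \in \mathbb{R}$ is a homogeneous equilibrium one has $\lambda u_{\ast} = f(u_{\ast})$ and $g(u_{\ast}) = 0$, so subtracting gives
\begin{equation*}
\lambda w - \text{div}(d\nabla w) = f(u) - f(u_{\ast}) \quad\text{in } \Omega, \qquad \partial_t w_\Gamma - \text{div}_\Gamma(\delta \nabla_\Gamma w_\Gamma) + d\partial_\nu w = g(u_\Gamma) - g(u_{\ast}) \quad\text{on } \Gamma.
\end{equation*}
Note that the assumption $g' \le c_g$ together with $|f'|\le c_f$ implies the sign condition \eqref{sign-cond}, so Proposition \ref{gl_st} guarantees that the solution exists globally, making the statement meaningful for all $t > 0$.

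First I would test the bulk equation against $w$ on $\Omega$ and the boundary equation against $w_\Gamma$ on $\Gamma$, integrate by parts in each, and add the resulting identities. By Proposition \ref{classic-diff} the solution is smooth on $(0,\infty) \times \overline{\Omega}$ for positive time, so the integrations by parts are justified. The conormal boundary terms $\pm \int_\Gamma d\partial_\nu w \, w_\Gamma \, dS$ cancel upon summation. Using the pointwise inequalities $(f(u) - f(u_{\ast}))w \le \tilde c_f w^2$ and $(g(u_\Gamma) - g(u_{\ast}))w_\Gamma \le c_g w_\Gamma^2$ from $f' \le \tilde c_f$ and $g' \le c_g$, this produces
\begin{equation*}
\tfrac{1}{2}\tfrac{d}{dt}\|w_\Gamma\|_{L^2(\Gamma)}^2 + \Bigl[ \|\sqrt{d}\,\nabla w\|_{L^2(\Omega)}^2 + \|\sqrt{\delta}\,\nabla_\Gamma w_\Gamma\|_{L^2(\Gamma)}^2 - (\tilde c_f - \lambda)\|w\|_{L^2(\Omega)}^2 - c_g\|w_\Gamma\|_{L^2(\Gamma)}^2 \Bigr] \le 0.
\end{equation*}

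For every $t > 0$ the function $w(t,\cdot)$ lies in the admissible class $\Theta_\delta$, so the Rayleigh-type inequality \eqref{rayleigh} bounds the bracketed term from below by $\eta \|w_\Gamma(t)\|_{L^2(\Gamma)}^2$, giving
\begin{equation*}
\tfrac{d}{dt}\|w_\Gamma(t)\|_{L^2(\Gamma)}^2 \le -2\eta\,\|w_\Gamma(t)\|_{L^2(\Gamma)}^2, \qquad t > 0.
\end{equation*}
A direct application of Gronwall's inequality, combined with the continuity of $t \mapsto u_\Gamma(t)$ with values in $L^2(\Gamma)$ up to $t = 0$ (which follows from Propositions \ref{semilinear} and \ref{escher-results}), yields the claimed exponential decay.

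The main obstacle is essentially bookkeeping: one must verify that $w(t,\cdot)$ is smooth enough to justify the two integrations by parts and to apply \eqref{rayleigh} for each $t > 0$, and then pass continuously to the initial time $t = 0$ where only $u_0 \in \mathcal{X}_\delta$ is available. Both issues are handled by the instantaneous parabolic smoothing in Proposition \ref{classic-diff} on $(0,\infty)$ together with the $L^2(\Gamma)$-continuity at $t = 0$ inherited from the embedding $\mathcal{X}_\delta \hookrightarrow L^2(\Gamma)$.
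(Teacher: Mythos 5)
Your proof is correct and follows essentially the same route as the paper's: subtract the constant equilibrium, test both equations with $w$, cancel the conormal terms, use $f'\leq \tilde c_f$ and $g'\leq c_g$, invoke \eqref{rayleigh} for the bracketed Rayleigh quotient, and conclude by Gronwall, with global existence secured via Proposition \ref{gl_st}. The only cosmetic slip is attributing the verification of \eqref{sign-cond} partly to $|f'|\leq c_f$ (it follows from $g'\leq c_g$ alone, with the nonnegative constant $\max\{c_g+1,C\}$ as the paper notes), which does not affect the argument.
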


\begin{proof}
We first note that if \eqref{rayleigh} holds true, then at most one
homogeneous equilibrium can exist, since it is necessary that either $\tilde{%
c}_f - \lambda <0$ or $c_g < 0$.

It is straightforward to see that $g(\xi )\xi \leq (c_{g}+1)\xi ^{2}+C$,
such that every solution exists globally in time by Proposition \ref{gl_st}.
Let $w=u(\cdot ,u_{0})-u_{\ast }$. Testing the equations for $w$ with $w$
itself, we get 
\begin{align*}
\frac{1}{2}\partial _{t}\Vert w_{\Gamma }\Vert _{L^{2}(\Gamma )}^{2}& \leq
-\Vert \sqrt{d}\nabla w\Vert _{L^{2}(\Omega )}^{2}-\Vert \sqrt{\delta }%
\nabla _{\Gamma }w_{\Gamma }\Vert _{L^{2}(\Gamma )}^{2} \\
&\qquad +(\tilde{c}_{f}-\lambda )\Vert w\Vert _{L^{2}(\Omega
)}^{2}+c_{g}\Vert w_{\Gamma }\Vert _{L^{2}(\Gamma )}^{2}.
\end{align*}%
Therefore $\partial _{t}\Vert w_{\Gamma }\Vert _{L^{2}(\Gamma )}^{2}\leq
-2\eta \Vert w_{\Gamma }\Vert _{L^{2}(\Gamma )}^{2}$ by \eqref{rayleigh},
and the result follows from Gronwall's inequality.
\end{proof}

We follow the approach of \cite{SW} to show the convergence of solutions to
single equilibria also in nontrivial situations, under the assumption that $%
f,g$ are real analytic. Thanks to Proposition \ref{classic-diff} we can work
with smooth solutions $u\in C^{\infty }\left( (0,t^{+})\times \overline{%
\Omega }\right) $.

In the situation of Theorem \ref{maingl}, the trajectory of any solution is
bounded in $\mathcal{X}_\delta$, and thus relatively compact. Combining this
with the gradient structure of \eqref{ell-dyn-classic}, which is due to %
\eqref{identity}, we obtain the following properties of the limit sets 
\begin{equation*}
\omega(u_0) = \{u_* \in \mathcal{X}_\delta \,:\, \exists\, t_k \nearrow
\infty \text{ such that }\; u(t_k;u_0) \to u_* \text{ as }k\to \infty\}
\end{equation*}
of trajectories (see, e.g., \cite[Chapter I and Chapter VII]{T}).

\begin{lemma}
\label{omega} Assume \eqref{classical-assum}, \eqref{sign-cond}, %
\eqref{c_ftilde} and \eqref{rayleigh}. Then for any $u_{0}\in \mathcal{X}%
_{\delta }$, the set $\omega (u_{0})$ is a nonempty, compact and connected
subset of $\mathcal{X}_{\delta }$. Furthermore, we have: \newline
\emph{(i)} $\omega (u_{0})$ is fully invariant for the corresponding
semiflow $S_{\delta }\left( t\right) $\ on $\mathcal{X}_{\delta }$; \newline
\emph{(ii)} $\mathcal{E}$ is constant on $\omega (u_{0})$; \newline
\emph{(iii)} $\emph{\text{dist}}_{\mathcal{X}_{\delta }}( S_{\delta
}(t)u_{0},\omega (u_{0})) \rightarrow 0$ as $t\rightarrow +\infty $; \newline
\emph{(iv)} $\omega (u_{0})$ consists of equilibria only.
\end{lemma}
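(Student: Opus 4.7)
The plan is to apply the classical theory of $\omega$-limit sets for gradient semiflows, together with the compactness and smoothness results already established, and to close the argument for (iv) by exploiting the Lyapunov identity \eqref{identity} on the invariant set $\omega(u_0)$. First, under the hypotheses \eqref{classical-assum}, \eqref{sign-cond}, \eqref{c_ftilde} and \eqref{rayleigh}, Theorem \ref{maingl} yields a compact global semiflow $S_\delta$ on $\mathcal{X}_\delta$ with a global attractor. In particular, every positive orbit $\{S_\delta(t)u_0 : t \geq 0\}$ is bounded in $\mathcal{X}_\delta$, and applying the compactness of $S_\delta(t;\cdot)$ to any such bounded set for $t$ bounded away from zero shows that the orbit is relatively compact in $\mathcal{X}_\delta$. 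From this general fact, the nonemptyness, compactness, connectedness, full invariance in (i), and the attraction property (iii) follow from the standard theory of $\omega$-limit sets for continuous semiflows on metric spaces (see e.g. \cite[Chapter I]{T}).

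For (ii), I would verify first that the functional $\mathcal{E}$ is continuous on $\mathcal{X}_\delta$: in the case $\delta \geq \delta_*$, $u_\Gamma \in W^{2-2/p,p}(\Gamma)$ gives $u = \mathcal{D}_\lambda(u_\Gamma) \in H^{2-1/p,p}(\Omega)$ by Lemma \ref{Diri-nonlinear-lemma}, and all terms in $\mathcal{E}$ are continuous with respect to these norms; in the case $\delta \equiv 0$ the surface gradient term is absent and $u \in H^{1,p}(\Omega)$ by Lemma \ref{Diri-nonlinear-lemma} again suffices. Since $\mathcal{E}$ is continuous and, by \eqref{identity}, nonincreasing along the smooth solution provided by Proposition \ref{classic-diff}, and since it is bounded below on the precompact orbit, the limit
\begin{equation*}
L := \lim_{t\to\infty} \mathcal{E}(S_\delta(t)u_0)
\end{equation*}
exists. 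For any $u_* \in \omega(u_0)$, pick a sequence $t_k \nearrow \infty$ with $S_\delta(t_k)u_0 \to u_*$ in $\mathcal{X}_\delta$; by continuity $\mathcal{E}(u_*) = L$, which gives (ii).

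Finally, (iv) follows from combining (i), (ii) and \eqref{identity}. Given $u_* \in \omega(u_0)$, the full invariance of $\omega(u_0)$ means that $S_\delta(t)u_* \in \omega(u_0)$ for all $t \geq 0$, hence by (ii) the function $t \mapsto \mathcal{E}(S_\delta(t)u_*)$ is identically equal to $L$. Since this solution is smooth by Proposition \ref{classic-diff}, the Lyapunov identity \eqref{identity} applies and yields
\begin{equation*}
0 = \partial_t \mathcal{E}(S_\delta(t)u_*) = -\| \partial_t u_\Gamma(t;u_*)\|_{L^2(\Gamma)}^2, \qquad t>0,
\end{equation*}
so that $\partial_t u_\Gamma(t;u_*) \equiv 0$, i.e.\ $u_\Gamma(t;u_*) = (u_*)_\Gamma$ is stationary. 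Because $u(t;u_*) = \mathcal{D}_\lambda(u_\Gamma(t;u_*))$ by the reformulation \eqref{abs-semi}, the solution $u(\cdot;u_*)$ is itself constant in time, which means $u_*$ is an equilibrium of \eqref{ell-dyn-classic}.

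The only nonstandard point is the justification of (iv): one must know that each element of $\omega(u_0)$ is regular enough for \eqref{identity} to hold along its own trajectory, and this is exactly what Proposition \ref{classic-diff} provides (solutions become $C^\infty$ immediately). Continuity of $\mathcal{E}$ on $\mathcal{X}_\delta$ needed for (ii) is routine but has to be checked separately in the two cases $\delta \geq \delta_*$ and $\delta \equiv 0$ because the relevant trace/lifting regularity differs; this is the only place where a little care is required.
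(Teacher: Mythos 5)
Your proposal is correct and follows exactly the route the paper takes: the paper gives no separate proof of this lemma, instead deducing it from the boundedness and relative compactness of trajectories (via Theorem \ref{maingl} and the compactness of the semiflow), the gradient structure \eqref{identity}, and the standard theory of $\omega$-limit sets cited from \cite[Chapters I and VII]{T}. You have simply written out the standard argument that those citations encode, including the correct use of Proposition \ref{classic-diff} to justify applying \eqref{identity} along trajectories through points of $\omega(u_0)$.
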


The key to prove that each solution converges to a single equilibrium in
case when $f$ and $g$ are analytic is the following inequality of \L %
ojasiewicz-Simon type.

\begin{proposition}
\label{l3} Let $d\equiv d_*$ and $\delta \in \{0,1\}$. Assume that $f,g$ are
real analytic, $|f^{\prime }|\leq c_{f}$, $\lambda >c_{f}$ and that %
\eqref{sign-cond} and \eqref{rayleigh} are satisfied. Let $u_{\ast }\in
C^{\infty }(\overline{\Omega })$ be an equilibrium of \eqref{ell-dyn-classic}%
. Then there are constants $\theta \in (0,1/2)$ and $r>0$, depending on $%
u_{\ast }$, such that for any $u\in C^{2}\left( \overline{\Omega }\right) $
with $\Vert u-u_{\ast }\Vert _{H^{2}\left( \Omega \right) }+\delta \Vert
u_{\Gamma }-u_{\ast }\Vert _{H^{2}\left( \Gamma \right) }\leq r,$ we have 
\begin{align}
& \left\Vert \lambda u-d\Delta u-f(u)\right\Vert _{L^{2}(\Omega
)}+\left\Vert -\delta \Delta _{\Gamma }u_{\Gamma }+d\partial _{\nu
}u-g(u_{\Gamma })\right\Vert _{L^{2}\left( \Gamma \right) }  \label{LS} \\
& \qquad \geq |\mathcal{E}(u)-\mathcal{E}(u_{\ast })|^{1-\theta }.  \notag
\end{align}
\end{proposition}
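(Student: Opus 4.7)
The plan is to follow the classical Lyapunov--Schmidt/\L ojasiewicz--Simon strategy (cf.\ \cite{SW, Wu}), adapted to the dynamic boundary setting. Introduce
\[
\mathbb{X}_{\delta }:=\bigl\{u\in H^{2}(\Omega )\,:\,\delta u_{\Gamma }\in H^{2}(\Gamma )\bigr\},\qquad \mathbb{Y}:=L^{2}(\Omega )\times L^{2}(\Gamma ),
\]
and define the nonlinear operator
\[
\mathcal{M}:\mathbb{X}_{\delta }\to \mathbb{Y},\qquad \mathcal{M}(u):=\bigl(\lambda u-d\Delta u-f(u),\; -\delta \Delta _{\Gamma }u_{\Gamma }+d\partial _{\nu }u-g(u_{\Gamma })\bigr).
\]
Integration by parts shows that, with respect to the natural $L^{2}(\Omega )\oplus L^{2}(\Gamma )$ pairing,
\[
\langle \mathcal{E}^{\prime }(u),v\rangle =\int_{\Omega }\mathcal{M}_{1}(u)\,v\,dx+\int_{\Gamma }\mathcal{M}_{2}(u)\,v_{\Gamma }\,dS,
\]
so the left-hand side of \eqref{LS} equals $\Vert \mathcal{M}(u)\Vert _{\mathbb{Y}}$. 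Since $f,g$ are real analytic and the corresponding Nemytskii operators are analytic on $\mathbb{X}_{\delta }$ (using Sobolev embeddings into $C(\overline{\Omega })$, $C(\Gamma )$), both $\mathcal{E}:\mathbb{X}_{\delta }\to \mathbb{R}$ and $\mathcal{M}:\mathbb{X}_{\delta }\to \mathbb{Y}$ are real-analytic, and $\mathcal{M}(u_{\ast })=0$.

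Next, I would study the linearization
\[
Lv:=D\mathcal{M}(u_{\ast })v=\bigl(\lambda v-d\Delta v-f^{\prime }(u_{\ast })v,\; -\delta \Delta _{\Gamma }v_{\Gamma }+d\partial _{\nu }v-g^{\prime }(u_{\ast })v_{\Gamma }\bigr).
\]
Writing $L=L_{0}+K$, where $L_{0}$ is the principal linear part $(\lambda -d\Delta ,\,-\delta \Delta _{\Gamma }+d\partial _{\nu })$ and $K$ is a compact (zeroth-order) perturbation of $L_{0}$, one sees that $L_{0}:\mathbb{X}_{\delta }\to \mathbb{Y}$ is an isomorphism (invertibility of the static linear problem with dynamic-type boundary conditions, following from the same elliptic/Stekloff theory used in Section~\ref{wellp}). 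Hence $L$ is Fredholm of index zero, and integration by parts shows $L$ is $L^{2}$-selfadjoint; therefore $N:=\ker L$ is finite-dimensional and $\mathbb{Y}=N\oplus L(\mathbb{X}_{\delta })$ orthogonally.

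Let $P$ denote the $L^{2}$-orthogonal projection onto $N$. By the analytic implicit function theorem applied to the equation
\[
(I-P)\mathcal{M}(u_{\ast }+y+w)=0,\qquad y\in N,\;w\in N^{\perp }\cap \mathbb{X}_{\delta },
\]
one solves uniquely $w=\psi (y)$ with $\psi $ real-analytic near $0$, since the derivative in $w$ at $(0,0)$ is $L|_{N^{\perp }\cap \mathbb{X}_{\delta }}$, which is invertible. The reduced functional $\Psi (y):=\mathcal{E}(u_{\ast }+y+\psi (y))$ is then real-analytic on a neighborhood of $0$ in the finite-dimensional space $N$, so the classical \L ojasiewicz inequality provides $\theta \in (0,1/2)$ and $C>0$ with
\[
|\Psi (y)-\Psi (0)|^{1-\theta }\leq C\,\Vert \nabla \Psi (y)\Vert _{N}
\]
for $y$ near $0$.

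To conclude, I would transfer the inequality back to $u$. Differentiating the composition and using $(I-P)\mathcal{M}(u_{\ast }+y+\psi (y))=0$ gives $\nabla \Psi (y)=P\mathcal{M}(u_{\ast }+y+\psi (y))$, so that $\Vert \nabla \Psi (y)\Vert _{N}\leq \Vert \mathcal{M}(u)\Vert _{\mathbb{Y}}$; moreover $\Psi (y)-\Psi (0)=\mathcal{E}(u)-\mathcal{E}(u_{\ast })$, and continuity of $P$ and $\psi $ ensures that smallness of $\Vert u-u_{\ast }\Vert _{H^{2}(\Omega )}+\delta \Vert u_{\Gamma }-u_{\ast }\Vert _{H^{2}(\Gamma )}$ forces $\Vert y\Vert $ to lie in the region of validity. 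This yields \eqref{LS}. The main technical obstacle I expect is the Fredholm analysis of $L$ in the nonstandard space $\mathbb{X}_{\delta }$: one must show carefully that the static linearization is selfadjoint and of index zero simultaneously in both regimes $\delta =0$ and $\delta =1$, since the boundary condition is itself second-order in the latter case. A secondary obstacle is verifying analyticity of the superposition operators induced by $f$ and $g$ at the level of $\mathbb{X}_{\delta }\to \mathbb{Y}$, which requires the embedding $\mathbb{X}_{\delta }\hookrightarrow C(\overline{\Omega })\times C(\Gamma )$ (guaranteed by the assumption $p>n$ used elsewhere in Section~\ref{qp}).
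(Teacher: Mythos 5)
Your overall strategy --- an analytic Lyapunov--Schmidt reduction followed by the finite-dimensional \L ojasiewicz inequality --- is the same as the paper's, which follows \cite{SW}. The implementation differs in one respect and has two genuine gaps. The difference: the paper does not reduce onto $\ker L$. It introduces the self-adjoint Wentzell Laplacian $A_W$ on $V_0=L^2(\Omega)\times L^2(\Gamma)$, takes the spectral projection $P_m$ onto the span of its first $m$ eigenfunctions, and works with the perturbed operator $\mathcal{L}(0)=L(0)+\lambda_m P_m$, which is coercive (hence bijective from $D(A_W)$ onto $V_0$) as soon as $\lambda_m>4\max\{\Vert f'(u_*)\Vert_{L^\infty},\Vert g'(u_*)\Vert_{L^\infty}\}$. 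The finite-dimensional space is then $K_m=\mathrm{span}\{\phi_1,\dots,\phi_m\}$ rather than $\ker L$, so the Fredholm analysis you single out as the main obstacle is never needed.

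The first gap sits precisely at that obstacle, in the case $\delta=0$: the operator $L_0=(\lambda-d\Delta,\,d\partial_\nu)$ is \emph{not} an isomorphism from $H^2(\Omega)$ onto $L^2(\Omega)\times L^2(\Gamma)$. Neumann data merely in $L^2(\Gamma)$ produces only $H^{3/2}(\Omega)$-regularity, so the range of $L_0$ on $H^2(\Omega)$ is $L^2(\Omega)\times H^{1/2}(\Gamma)$, dense but not closed in $\mathbb{Y}$; consequently $L$ is not Fredholm between your spaces and the orthogonal splitting $\mathbb{Y}=N\oplus L(\mathbb{X}_0)$ fails. The repair --- and what the paper does --- is to take as domain $D(A_W)=\{u\in V_1:\Delta u\in L^2(\Omega),\ \partial_\nu u\in L^2(\Gamma)\}$ with its graph norm, on which $A_W$ is self-adjoint with compact resolvent, so the spectral machinery applies in both regimes $\delta=0$ and $\delta=1$ simultaneously. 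The second gap is the transfer step: your argument establishes \eqref{LS} only for $u$ on the reduced manifold $u_*+y+\psi(y)$, whereas the statement requires it for every $u$ in an $H^2$-ball around $u_*$. One must in addition compare $\mathcal{E}(u)$ and $\Vert\mathcal{M}(u)\Vert_{\mathbb{Y}}$ with their values at the associated point $u_*+P(u-u_*)+\psi(P(u-u_*))$, using the bounded invertibility of the partial linearization near $u_*$; this is the content of \cite[Lemma 3.2]{SW}, and it is exactly where the elliptic regularity estimates \eqref{1007}--\eqref{1008} for $\mathcal{L}(0)^{-1}$ in the paper's proof are needed.
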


\begin{proof}
Our proof follows closely that of \cite[Theorem 3.1]{SW} which only includes
the case $\delta =1$ (cf. also \cite{Wu} for $g\equiv 0$ and $\delta =0$).
We shall briefly mention the details below in the case when $\delta =0$ and $%
g$ is nontrivial. To this end, let us first set%
\begin{equation*}
V_{k}:=\left\{ \left( u,u_{\Gamma }\right) \in W^{k,2}\left( \Omega \right)
\times W^{k-1/2,2}\left( \Gamma \right) :u_{\Gamma }=u|_{ \Gamma }\right\} ,
\end{equation*}%
for $k\geq 1$ (by convention, we also let $V_{0}:=L^{2}\left( \Omega \right)
\times L^{2}\left( \Gamma \right) $). Here and below, for the sake of
simplicity of notation we will identify any function $u$ that belongs to $%
W^{k,2}\left( \Omega \right) $ with $\left( u,u_{\Gamma }\right) \in V_{k}$
such that $u_{\Gamma }\in W^{k-1/2,2}\left( \Gamma \right)$. Next, consider
the so-called Wentzell Laplacian, given by 
\begin{equation*}
A_{W}:=\left( 
\begin{array}{cc}
\lambda I-d\Delta & 0 \\ 
d\partial _{\nu } & 0%
\end{array}%
\right) ,
\end{equation*}%
with domain $D\left( A_{W}\right) =\left\{ u\in V_{1}:-\Delta u\in
L^{2}\left( \Omega \right) ,\text{ }\partial _{\nu }u\in L^{2}\left( \Gamma
\right) \right\} $, which we endow with its natural graph norm $\left\Vert
A_{W}\cdot \right\Vert _{V_{0}}$. In particular, $D\left( A_{W}\right)
=V_{2} $ provided that the boundary $\Gamma $ is sufficiently regular (see 
\cite{GW}, \cite{CGGR}). It is also well-known that $\left( A_{W},D\left(
A_{W}\right) \right) $ is self-adjoint and positive on $V_{0}$. By \cite%
{Gal0}, we also infer that there exists a complete orthonormal family $%
\left\{ \phi _{j}\right\} \subset V_{0}$, with $\phi _{j}\in D\left(
A_{W}\right) $, as well as a sequence of eigenvalues $0<\lambda _{1}\leq
\lambda _{2}\leq ... \leq \lambda _{j}\rightarrow \infty $, as $j\rightarrow
\infty $, such that $A_{W}\phi _{j}=\lambda _{j}\phi _{j},$ $j\in \mathbb{N}%
_{+}$. Moreover, by standard elliptic theory and bootstrap arguments we have 
$\phi _{j}\in C^{\infty },$ for every $j\in \mathbb{N}_{+}$ provided that $%
\Gamma $ is sufficiently regular (see \cite[Appendix]{Gal0}). Let now $P_{m}$
be the orthogonal projector from $V_{0}$ onto $K_{m}:=span\left\{ \phi
_{1},...,\phi _{m}\right\} $. Following a similar strategy to \cite[%
(3.8)-(3.11)]{SW}, it is easy to show that%
\begin{equation}
\left( A_{W}u+\lambda _{m}P_{m}u,u\right) _{V_{1}^{\ast },V_{1}}\geq
C_{\lambda ,d}\left\Vert u\right\Vert _{V_{1}}^{2}+\frac{1}{4}\lambda
_{m}\left\Vert u\right\Vert _{V_{0}}^{2}  \label{1005}
\end{equation}%
holds for any $u\in V_{1}$ (for some positive constant $C_{\lambda ,d}>0$).

Let $\psi $ be a critical point of $\mathcal{E}(u)$. For any $\psi \in
C^{2}\left( \overline{\Omega }\right) $, we consider the following
linearized operator $v\in V_{2}\longmapsto L\left( v\right) $, analogous to 
\cite[(3.12)]{SW}, given by%
\begin{equation*}
L\left( v\right) :=\left( 
\begin{array}{cc}
\lambda I-d\Delta & 0 \\ 
d\partial _{\nu } & 0%
\end{array}%
\right) +\left( 
\begin{array}{cc}
-f^{^{\prime }}\left( v+\psi \right) & 0 \\ 
0 & -g^{^{\prime }}\left( v+\psi \right)%
\end{array}%
\right) ,
\end{equation*}%
with domain $\mathcal{D}:=D\left( A_{W}\right) =V_{2}$. We note that one can
associate with $L\left( 0\right) $ the following bilinear form $b\left(
u_{1},u_{2}\right) $ on $V_{1}\times V_{1}$, as follows:%
\begin{equation*}
b\left( u_{1},u_{2}\right) =\int_{\Omega }\left( \lambda u_{1}u_{2}+d\nabla
u_{1}\cdot \nabla u_{2}-f^{^{\prime }}\left( \psi \right) u_{1}u_{2}\right)
dx+\int_{\Gamma }\left( -g^{^{\prime }}\left( \psi \right) u_{1}u_{2}\right)
dS,
\end{equation*}%
for any $u_{1},u_{2}\in V_{1}$. As in \cite{CGGR}, it can be easily shown
that $\left( L\left( v\right) ,\mathcal{D}\right) $ is self-adjoint on $%
V_{0} $. Moreover, by (\ref{1005}) it is readily seen that the operator $%
L\left( 0\right) +\lambda _{m}P_{m}$ is coercive with respect to the
(equivalent) inner product of $H^{1}\left( \Omega \right) ,$ provided that%
\begin{equation}
\lambda _{m}>4\max \left\{ \left\Vert f^{^{\prime }}\left( \psi \right)
\right\Vert _{L^{\infty }\left( \Omega \right) },\left\Vert g^{^{\prime
}}\left( \psi \right) \right\Vert _{L^{\infty }\left( \Gamma \right)
}\right\} .  \label{1006}
\end{equation}%
Recalling that $\psi $ is sufficiently smooth, we note that condition (\ref%
{1006}) can always be achieved by choosing a sufficiently large $m$.

Next, consider the following operators:%
\begin{equation*}
\Pi :=\lambda _{m}P_{m}:V_{0}\rightarrow V_{0},\text{ }\mathcal{L}\left(
v\right) :\mathcal{D}\rightarrow V_{0},\text{ }\mathcal{L}\left( v\right)
h=\Pi h+L\left( v\right) h,
\end{equation*}%
for any $v\in \mathcal{D}$. Clearly, $\mathcal{L}\left( 0\right) :\mathcal{D}%
\rightarrow V_{0}$ is bijective on account of (\ref{1006}). The following
lemma is concerned with some regularity properties for the (unique) solution
of $\mathcal{L}\left( 0\right) h=w$, for some given $w=\left(
w_{1},w_{2}\right) \in V_{0}.$

\begin{lemma}
Let $w=\left( w_{1},w_{2}\right) \in W^{k-1,2}\left( \Omega \right) \times
W^{k-1/2,2}\left( \Gamma \right) $, for any $k\in \mathbb{N}$, $k\geq 1$.
Then the following estimate holds:%
\begin{equation}
\left\Vert h\right\Vert _{V_{k+1}}\leq C\left( \left\Vert w_{1}\right\Vert
_{W^{k-1,2}\left( \Omega \right) }+\left\Vert w_{2}\right\Vert
_{W^{k-1/2,2}\left( \Gamma \right) }\right) .  \label{1007}
\end{equation}%
Moreover, it holds%
\begin{equation}
\left\Vert h\right\Vert _{C^{0,\gamma }\left( \overline{\Omega }\right)
}\leq C\left( \left\Vert w_{1}\right\Vert _{L^{p}\left( \Omega \right)
}+\left\Vert w_{2}\right\Vert _{L^{q}\left( \Gamma \right) }\right) ,
\label{1008}
\end{equation}%
as long as $w_{1}\in L^{p}\left( \Omega \right) $ with $p>n$ and $w_{2}\in
L^{q}\left( \Gamma \right) $ with $q>n-1.$ The constant $C>0$ is independent
of $k.$
\end{lemma}

Indeed, writing the equation $\mathcal{L}\left( 0\right) h=w$ in the form%
\begin{equation}
\left\{ 
\begin{array}{l}
\lambda h-d\Delta h=q_{1}:=f^{^{\prime }}\left( \psi \right) h-\Pi
h+w_{1},\quad \text{ a.e. in }\Omega , \\ 
d\partial _{\nu }h=q_{2}:=g^{^{\prime }}\left( \psi \right) h-\left( \Pi
h\right) _{\mid \Gamma }+w_{2},\quad \text{ a.e. on }\Gamma ,%
\end{array}%
\right.  \label{ellh}
\end{equation}%
we have $\left\Vert h\right\Vert _{V_{1}\cap \mathcal{D}}\leq C\left\Vert
w\right\Vert _{V_{0}}$ since $\mathcal{L}\left( 0\right) :\mathcal{D}%
\rightarrow V_{0}$ is a bijection. Recalling the standard trace-regularity
theory and the fact that $\psi \in C^{\infty }$, and applying the $W^{l,2}$
regularity theorem (see, e.g., \cite[II, Theorem IV.5.1]{LM}) to (\ref{ellh}%
) with $l=2,3,...,$ we have%
\begin{equation*}
\left\Vert h\right\Vert _{V_{l}}\leq C\left\Vert h\right\Vert
_{W^{l,2}\left( \Omega \right) }\leq C\left( \left\Vert q_{1}\right\Vert
_{W^{l-2,2}\left( \Omega \right) }+\left\Vert h\right\Vert _{W^{l-1,2}\left(
\Omega \right) }+\left\Vert q_{2}\right\Vert _{W^{l-3/2,2}\left( \Gamma
\right) }\right).
\end{equation*}%
From this (\ref{1007}) immediately follows by exploiting a standard
iteration argument for $l\geq 2$. The proof of (\ref{1008}) is contained in 
\cite{Wa} (see, also, \cite{Wa2} for the proof of the same bound in $%
L^{\infty }\left( \Omega \right) \times L^{\infty }\left( \Gamma \right) $%
-norm).

Exploiting now the results of the preceding lemma, the proof of the
(extended) \L ojasiewicz-Simon inequality (\ref{LS}) can be reproduced from
that of \cite[Theorem 3.1 and Lemma 3.2]{SW} with no essential modifications.
\end{proof}

To apply the proposition we need that the solutions converge to the limit
set in a norm that is stronger than that of $\mathcal{X}_\delta$. This will
be a consequence of the next lemma. Let $B_{\mathcal{X}_{\delta }}(R)$ be
the ball in $\mathcal{X}_{\delta }$ centered at the origin with radius $R>0$.

\begin{lemma}
\label{uniform2} Assume \eqref{classical-assum}, \eqref{sign-cond}, %
\eqref{c_ftilde} and \eqref{rayleigh}. Then for $k\in \mathbb{N}_0$ and $R >
0$ there is a constant $C= C(k,R) > 0$ such that 
\begin{equation*}
\sup_{u_0 \in B_{\mathcal{X}_\delta}(R) } \sup_{t \geq 1}
\|S_\delta(t;u_0)\|_{C^k(\overline{\Omega})} \leq C.
\end{equation*}
\end{lemma}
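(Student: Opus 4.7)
The plan is to first obtain an $L^\infty(\Gamma)$-bound on $u_\Gamma(t)$ uniform in $t\geq 0$ and $u_0\in B_{\mathcal{X}_\delta}(R)$, and then bootstrap regularity through the variation-of-constants representation \eqref{abs-semi}, exploiting the analyticity of the semigroup generated by $\mathcal{C}+\mathcal{N}_\lambda$.

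To that end I would test the elliptic equation against $u$ and the dynamic boundary equation against $u_\Gamma$, integrate by parts, and sum. Using \eqref{sign-cond}, \eqref{c_ftilde} and the Rayleigh inequality \eqref{rayleigh} applied to $u(t)\in\Theta_\delta$ (valid because $u(t)\in C^\infty(\overline{\Omega})$ by Proposition \ref{classic-diff}), this yields
$$\tfrac{d}{dt}\|u_\Gamma\|_{L^2(\Gamma)}^2 + 2\eta\|u_\Gamma\|_{L^2(\Gamma)}^2 \leq C,$$
with $C$ independent of $t$ and $u_0$; Gronwall then gives $\sup_{t\geq 0}\|u_\Gamma(t)\|_{L^2(\Gamma)}\leq C(R)$. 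Plugging this into the $T$-independent Moser-Alikakos estimate \eqref{moser}, and using the embedding $\mathcal{X}_\delta\hookrightarrow L^\infty(\Gamma)$ (valid since $p>n$ in \eqref{classical-assum}), I obtain $\sup_{t\geq 0}\|u_\Gamma(t)\|_{L^\infty(\Gamma)}\leq C(R)$. The elliptic maximum principle applied to $u=\mathcal{D}_\lambda(u_\Gamma)$ with bounded inhomogeneity $f(u)$ then furnishes $\sup_{t\geq 0}\|u(t)\|_{L^\infty(\Omega)}\leq C(R)$.

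I would then run a parabolic bootstrap. By Theorem \ref{thm1} and Proposition \ref{D-N-lemma}, $\mathcal{C}+\mathcal{N}_\lambda$ is the negative generator of an analytic $C_0$-semigroup on $L^p(\Gamma)$, and by Lemma \ref{Diri-nonlinear-lemma} together with the smoothness of $f$ and $g$, the right-hand side $H(u_\Gamma):=g(u_\Gamma)-d\partial_\nu\mathcal{R}_\lambda(f(\mathcal{D}_\lambda(u_\Gamma)),0)$ of \eqref{abs-semi} is bounded in $L^p(\Gamma)$ in terms of any fixed $L^\infty(\Gamma)$-bound on $u_\Gamma$. The standard smoothing estimates for analytic semigroups, applied to \eqref{abs-semi} with initial time shifted into $[0,1/2]$, therefore yield uniform bounds $\|u_\Gamma(t)\|_{W^{2\alpha,p}(\Gamma)}\leq C(R,\alpha)$ for every $\alpha\in(0,1)$ and every $t\geq 1/2$. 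Transferring this improvement to $u$ via Lemma \ref{Diri-nonlinear-lemma} and elliptic regularity for the nonlinear Dirichlet problem, reinjecting into $H$, and iterating the smoothing step finitely many times lifts the regularity of $u_\Gamma$, and hence of $u$, to any prescribed Sobolev order; Sobolev embedding then delivers the claimed bound $\|S_\delta(t;u_0)\|_{C^k(\overline{\Omega})}\leq C(k,R)$ uniformly in $t\geq 1$ and $u_0\in B_R$.

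The main obstacle I anticipate is keeping uniformity in $u_0\in B_R$ throughout the bootstrap: at each stage the constants furnished by the analytic-semigroup smoothing and by Lemma \ref{Diri-nonlinear-lemma} depend continuously on the previously obtained bounds. The key point is therefore to verify that all such constants depend only on $R$ and on the iteration order, which is guaranteed by the $u_0$-independent $L^\infty$-bound established in the first step, combined with the smoothness of $f$, $g$ and $\mathcal{D}_\lambda$, which ensures the relevant Nemytskii maps are Lipschitz on bounded sets.
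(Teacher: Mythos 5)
Your proof is correct and follows the same overall strategy as the paper: a uniform-in-time a priori bound followed by a bootstrap through the variation-of-constants formula \eqref{abs-semi}, transferring regularity back and forth between $u_\Gamma$ and $u=\mathcal{D}_\lambda(u_\Gamma)$ via Lemma \ref{Diri-nonlinear-lemma} and elliptic regularity. The two arguments differ in the details of both stages. For the starting bound the paper simply invokes Theorem \ref{maingl}: the existence of the global attractor already gives $\|u\|_{L^\infty(\mathbb{R}_+;H^{2-1/p,p}(\Omega))}\le C(R)$ (resp.\ $W^{1,p}(\Omega)$ for $\delta\equiv0$), whereas you rederive a uniform $L^\infty(\Gamma)$ bound from scratch via the energy identity, \eqref{rayleigh} and the Moser iteration \eqref{moser}; your route is more self-contained and quantitative, while the paper's is shorter given what has already been proved. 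Note only that to feed your $L^\infty(\Gamma)$ bound into $H(u_\Gamma)$ you still need a bound in a space on which $\mathcal{D}_\lambda$ acts, i.e.\ some $W^{\eta,p}(\Gamma)$ with $\eta>0$; this is exactly the interpolation--absorption step in the proof of Lemma \ref{ge2} and should be cited or repeated with a restart at $t-1$ to keep the constant $T$-independent. For the bootstrap the paper gains a full derivative per step by reading the mild-solution formula as an identity on the higher-order spaces $W^{k-1/p,p}(\Gamma)$, on which $\mathcal{N}_\lambda$ (Proposition \ref{D-N-lemma}) and $\mathcal{C}$ generate analytic semigroups, and quoting the optimal regularity result of Lunardi; you instead use fractional-power smoothing $D(A^\alpha)\hookrightarrow W^{2\alpha,p}(\Gamma)$ from $L^p(\Gamma)$, gaining slightly less than the full smoothing order per step (and, for $\delta\equiv0$, only up to order $\alpha<1$ since $\mathcal{N}_\lambda$ is first order, so more iterations are needed). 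Both iterations terminate after finitely many steps and both require the restart-at-$t-1$ device to keep the convolution integral uniformly bounded for $t\ge1$, so the conclusions coincide.
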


\begin{proof}
By Theorem \ref{maingl}, the semiflow has global attractor in $\mathcal{X}%
_\delta$. Therefore, using Lemma \ref{Diri-nonlinear-lemma}, 
\begin{equation*}
\|u\|_{L^\infty(\mathbb{R}_+; H^{2-1/p,p}(\Omega))} \leq C(R) \qquad \text{%
if } \delta \geq \delta_*,
\end{equation*}
\begin{equation*}
\|u\|_{L^\infty(\mathbb{R}_+; W^{1,p}(\Omega))} \leq C(R)\qquad \text{if }
\delta \equiv 0,
\end{equation*}
for all $u= u(\cdot; u_0)$ with $u_0 \in B_{\mathcal{X}_\delta}(R)$. Let us
consider the case $\delta \equiv 0$. We use the representation 
\begin{equation}  \label{msol}
u_\Gamma(t) = e^{-t\mathcal{N}_\lambda}u_0 + e^{- \cdot \mathcal{N}_\lambda
} *\big (g(u_\Gamma) - d \partial_\nu \mathcal{R}_\lambda (f(u),0)\big )(t),
\qquad t > 0,
\end{equation}
and assume, inductively, that 
\begin{equation*}
\Vert u\Vert _{L^{\infty }(1,\infty ;W^{k,p}(\Omega ))}\leq C(k,R)
\end{equation*}%
for some $k\in \mathbb{N}$. Then $\Vert g(u_{\Gamma })\Vert _{L^{\infty
}(1,\infty ;W^{k-1/p,p}(\Gamma ))}\leq C(k,R)$, and further, using \cite[%
Theorem 13.1]{Amann84}, 
\begin{equation*}
\Vert d\partial _{\nu }\mathcal{R}_{\lambda }(f(u),0)\Vert _{L^{\infty
}(1,\infty ;W^{k-1/p,p}(\Gamma ))}\leq C(k)\Vert f(u)\Vert _{L^{\infty
}(1,\infty ;W^{k-1,p}(\Omega ))}\leq C(k,R).
\end{equation*}%
Considering \eqref{msol} as an identity on $W^{k-1/p,p}(\Gamma )$, it
follows from \cite[Proposition 4.4.1]{Lun95} that $\Vert u_{\Gamma }\Vert
_{L^{\infty }(1,\infty ;W^{k+1-1/p,p}(\Gamma ))}\leq C(k+1,R).$ Moreover, by 
\cite[Theorem 13.1]{Amann84}, we have 
\begin{align*}
\Vert u\Vert _{L^{\infty }(1,\infty ;W^{k+1,p}(\Omega ))}& \,\leq C\big(%
\Vert f(u)\Vert _{L^{\infty }(1,\infty ;W^{k-1,p}(\Omega ))}+\Vert u_{\Gamma
}\Vert _{L^{\infty }(1,\infty ;W^{k+1-1/p,p}(\Gamma ))}\big) \\
& \,\leq C(k+1,R).
\end{align*}%
The asserted estimate in case $\delta \equiv 0$ now follows from Sobolev's
embeddings. The arguments in the case $\delta \geq \delta _{\ast }$ are
similar.
\end{proof}

We can now prove the main result of this subsection.

\begin{theorem}
\label{main2} Let $p\in (n,\infty )$, $d >0$ and $\delta \in \{0,1\}$.
Assume that $f,g$ are real analytic, $|f^{\prime }|\leq c_{f}$, $\lambda
>c_{f}$ and that $g$ satisfies \eqref{sign-cond} and \eqref{rayleigh}. Then
for any given initial datum $u_{0}\in \mathcal{X}_{\delta }$ the
corresponding solution $u\left( t;u_{0}\right) =S_{\delta }(t;u_{0})$ of (%
\ref{ell-dyn-classic}) exists globally in time and converges to a single
equilibrium $u_{\ast }$ in the topology of $\mathcal{X}_{\delta } $. More
precisely,%
\begin{equation}
{\lim_{t\rightarrow +\infty }}\left( \Vert u(t;u_{0})-u_{\ast }\Vert _{%
\mathcal{X}_{\delta }}+\Vert \partial _{t}u(t;u_{0})\Vert _{L^{2}\left(
\Gamma \right) }\right) =0.  \label{5.7}
\end{equation}
\end{theorem}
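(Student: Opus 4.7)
My plan is to run the classical \L ojasiewicz--Simon scheme, coupling the gradient-flow identity \eqref{identity}, the inequality of Proposition \ref{l3}, and the strong-topology precompactness from Lemma \ref{uniform2}. Global existence is granted by Proposition \ref{gl_st}. By Lemma \ref{omega}, the set $\omega(u_0)\subset \mathcal{X}_\delta$ is nonempty, compact, connected, consists solely of equilibria, and $\mathcal{E}$ takes a constant value $\mathcal{E}_\infty$ on it, with $\mathcal{E}(u(t;u_0))\searrow \mathcal{E}_\infty$. Lemma \ref{uniform2} further ensures that the tail $\{u(t;u_0):t\geq 1\}$ is precompact in $C^k(\overline{\Omega})$ for every $k$, in particular in $H^2(\Omega)$ and (for $\delta=1$) in $H^2(\Gamma)$, the topology required by Proposition \ref{l3}.

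The central observation is that on any time interval on which the trajectory lies in the $r$-neighborhood of a chosen $u_\ast \in \omega(u_0)$ in the $H^2(\Omega)+\delta H^2(\Gamma)$ norm (with $\theta\in (0,1/2)$ and $r>0$ supplied by Proposition \ref{l3}), the first summand on the left of \eqref{LS} vanishes because $u$ solves the elliptic equation, while the second equals $\|\partial_t u_\Gamma\|_{L^2(\Gamma)}$ by the dynamic boundary condition. Combining \eqref{LS} with \eqref{identity} then yields, on such an interval,
\begin{equation*}
-\frac{d}{dt}\bigl(\mathcal{E}(u(t))-\mathcal{E}_\infty\bigr)^{\theta} = \theta\bigl(\mathcal{E}(u(t))-\mathcal{E}_\infty\bigr)^{\theta-1}\|\partial_t u_\Gamma\|_{L^2(\Gamma)}^{2} \geq \theta\,\|\partial_t u_\Gamma(t)\|_{L^2(\Gamma)},
\end{equation*}
with the degenerate case $\mathcal{E}(u(t))=\mathcal{E}_\infty$ handled separately (it forces $\partial_t u_\Gamma\equiv 0$ afterwards by \eqref{identity}). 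This controls the $L^2(\Gamma)$-arc length of the orbit by $(\mathcal{E}(u(\cdot))-\mathcal{E}_\infty)^{\theta}/\theta$.

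Next comes the standard trapping argument, which I expect to be the main obstacle. Since $u_\ast\in \omega(u_0)$, using the precompactness from Lemma \ref{uniform2} I pick $t_n\nearrow \infty$ with $u(t_n;u_0)\to u_\ast$ in $H^2(\Omega)$ (and in $H^2(\Gamma)$ if $\delta=1$) and $\mathcal{E}(u(t_n))\to \mathcal{E}_\infty$. Define $\tau_n$ as the first time $s\geq t_n$ at which $\|u(s)-u_\ast\|_{H^2(\Omega)}+\delta\|u_\Gamma(s)-u_\ast\|_{H^2(\Gamma)}=r$; for large $n$ I must show $\tau_n=\infty$. If not, integrating the displayed inequality over $[t_n,\tau_n]$ gives $\int_{t_n}^{\tau_n}\|\partial_t u_\Gamma\|_{L^2(\Gamma)}\,ds\to 0$ as $n\to\infty$, so that $u_\Gamma(\tau_n)-u_\ast|_\Gamma$ is small in $L^2(\Gamma)$. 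The delicate point is to convert this $L^2(\Gamma)$-smallness into $H^2(\Omega)+\delta H^2(\Gamma)$-smallness of $u(\tau_n)-u_\ast$, contradicting the definition of $\tau_n$. I would carry this out via an interpolation inequality of the type $\|v\|_{H^2}\leq C\|v\|_{L^2}^{\alpha}\|v\|_{H^k}^{1-\alpha}$, using the Lipschitz control of $\mathcal{D}_\lambda$ from Lemma \ref{Diri-nonlinear-lemma} (applied to $u-u_\ast$) to transfer $L^2(\Gamma)$-smallness of the trace to $L^2(\Omega)$-smallness of the full solution, and the uniform $C^k$-bounds from Lemma \ref{uniform2} to control the higher-regularity factor. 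Once $\tau_n=\infty$, letting $\tau_n\to\infty$ in the integrated inequality yields $\int_{t_n}^{\infty}\|\partial_t u_\Gamma\|_{L^2(\Gamma)}\,ds<\infty$, and hence $u_\Gamma(t)\to u_\ast|_\Gamma$ in $L^2(\Gamma)$.

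To conclude, the precompactness of $\{u_\Gamma(t):t\geq 1\}$ in $\mathcal{X}_\delta$ forces any $\mathcal{X}_\delta$-cluster point of $u_\Gamma(t)$ to coincide with the already identified $L^2(\Gamma)$-limit $u_\ast|_\Gamma$; hence $u_\Gamma(t)\to u_\ast|_\Gamma$ in $\mathcal{X}_\delta$, and the corresponding convergence $u(t;u_0)\to u_\ast$ in $H^{s,p}(\Omega)$ follows from $u=\mathcal{D}_\lambda(u_\Gamma)$ together with Lemma \ref{Diri-nonlinear-lemma}. Finally, $\|\partial_t u_\Gamma(t)\|_{L^2(\Gamma)}\to 0$ follows from $\partial_t u_\Gamma\in L^2(0,\infty;L^2(\Gamma))$ (a direct consequence of integrating \eqref{identity}) together with the uniform continuity in time of $t\mapsto \partial_t u_\Gamma(t)$ in $L^2(\Gamma)$; the latter is obtained by differentiating the dynamic boundary equation once in time and bounding the resulting terms via the uniform smoothness provided by Lemma \ref{uniform2}.
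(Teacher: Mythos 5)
Your proposal is correct in substance and rests on the same three pillars as the paper's proof (the energy identity \eqref{identity}, the \L ojasiewicz--Simon inequality of Proposition \ref{l3}, and the uniform $C^k$-bounds of Lemma \ref{uniform2}), but it diverges at the key localization step, and the divergence is worth noting. You run the classical single-equilibrium trapping argument: fix one $u_\ast\in\omega(u_0)$, introduce exit times $\tau_n$ from the ball of radius $r$ around $u_\ast$ in $H^2(\Omega)+\delta H^2(\Gamma)$, and rule out finite $\tau_n$ by converting $L^2(\Gamma)$-smallness of $u_\Gamma(\tau_n)-u_\ast|_\Gamma$ back into $H^2$-smallness via interpolation against the uniform $C^k$-bounds and the Lipschitz continuity of $\mathcal{D}_\lambda$. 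This works, but it is the laborious route. The paper avoids exit times altogether: since $\omega(u_0)$ is compact, it is covered by finitely many balls on each of which \eqref{LS} holds, yielding \emph{uniform} constants $\xi, C_L$ and a full neighborhood $U$ of $\omega(u_0)$ on which \eqref{LS} is valid with $\mathcal{E}(u_\ast)=\mathcal{E}_\infty$ (here it is essential that $\mathcal{E}$ is constant on the connected set $\omega(u_0)$, Lemma \ref{omega}(ii)); then the convergence $\mathrm{dist}(u(t),\omega(u_0))\to 0$ in $H^2(\Omega)\oplus\delta H^2(\Gamma)$ --- obtained by interpolating the $\mathcal{X}_\delta$-convergence of Lemma \ref{omega}(iii) against Lemma \ref{uniform2} --- places the trajectory in $U$ for all $t\geq t_0$, so the differential inequality holds on all of $(t_0,\infty)$ and $\partial_t u_\Gamma\in L^1(t_0,\infty;L^2(\Gamma))$ follows in one stroke. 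What your approach buys is independence from the finite covering/uniformity argument (you only need \eqref{LS} near the one limit point you selected); what the paper's approach buys is the complete elimination of the trapping step and of the $L^2\to H^2$ transfer estimate, which is the most delicate point in your outline. The remaining steps --- identifying the $L^2(\Gamma)$-limit, upgrading to $\mathcal{X}_\delta$ by precompactness, and deducing $\|\partial_t u_\Gamma(t)\|_{L^2(\Gamma)}\to 0$ from integrability plus uniform continuity --- coincide with the paper's.
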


\begin{proof}
\emph{Step 1.} From Theorem \ref{maingl} and Lemma \ref{omega} we know that
the solution $u= u(\cdot;u_0)$ is smooth in space and time, exists globally
and that the corresponding trajectory converges to the set of equilibria in $%
\mathcal{X}_\delta$. By Lemma \ref{uniform2}, the trajectory is also bounded
in, say, $W^{3,p}(\Gamma)$. Since $\omega (u_{0})\subset C^{\infty }(%
\overline{\Omega }),$ we can apply the interpolation inequality (\ref%
{interpb}) with suitable $\theta\in (0,1)$ to obtain that 
\begin{equation}
\text{dist}_{\mathcal{V}_{\delta }}(u(t;u_{0}),\omega (u_{0}))\rightarrow
0\qquad \text{as }t\rightarrow +\infty ,  \label{prop3_bis}
\end{equation}%
where we have set $\mathcal{V}_{0}:=H^{2}\left( \Omega \right) $ if $\delta
=0$, and $\mathcal{V}_{1}:=H^{2}\left( \Omega \right) \oplus H^{2}\left(
\Gamma \right) $ if $\delta =1,$ respectively.

\emph{Step 2.} The function $t \mapsto \mathcal{E}(u(t))$ is decreasing and
bounded from below. Thus 
\begin{equation*}
\mathcal{E}_{\infty}:=\lim_{t\rightarrow \infty }\mathcal{E} ( u( t))
\end{equation*}
exists. If there is $t^\sharp$ with $\mathcal{E}( u( t^{\sharp })) = 
\mathcal{E}_\infty$, then $u$ is an equilibrium and there is nothing to
prove. Hence we may suppose that for all $t\geq t_{0}>0,$ we have $\mathcal{E%
}\left( u\left( t\right) \right) >\mathcal{E}_{\infty }.$ We first observe
that, by Lemma \ref{l3}, the functional $\mathcal{E}$ satisfies the {\L }%
ojasiewicz-Simon inequality (\ref{LS}) near every $u_{\ast }\in \omega
(u_{0}).$ Since $\omega (u_{0})$ is compact in $\mathcal{X}_{\delta }$, we
can cover it by the union of finitely many balls $B_{j}$ with centers $%
u_{\ast }^{j}$ and radii $r_{j},$ where each radius is such that (\ref{LS})
holds in $B_{j}$. It follows from Proposition \ref{l3} that there exist
uniform constants $\xi \in (0,1/2)$, $C_{L}>0$ and a neighborhood $U$ of $%
\omega (u_{0})$ in $\mathcal{X}_\delta$ such that (\ref{LS}) holds in $U$.
Thus, recalling (\ref{prop3_bis}), we can find a time $t_{0}\geq 1$ such
that $u ( t; u_0) $ belongs to $U$ for all $t\geq t_{0}.$ On account of (\ref%
{identity}) and (\ref{LS}) we obtain 
\begin{align}
& -\frac{d}{dt} ( \mathcal{E} ( u ( t)) -\mathcal{E}_{\infty }) ^{\xi } 
\notag \\
& =-\xi \partial _{t}\mathcal{E}( u( t)) ( \mathcal{E}( u( t)) -\mathcal{E}%
_{\infty }) ^{\xi -1}  \notag \\
& \geq \xi C_{L}\frac{\left\Vert \partial _{t}u_\Gamma\right\Vert
_{L^{2}\left( \Gamma \right) }^{2}}{\left\Vert \lambda u-d\Delta
u-f(u)\right\Vert _{L^{2}\left( \Omega \right) }+\left\Vert -\delta \Delta
_{\Gamma }u_{\Gamma }+d\partial _{\nu }u-g(u_\Gamma )\right\Vert
_{L^{2}\left( \Gamma \right) }}.  \notag
\end{align}%
Recalling (\ref{ell-dyn-classic}), we get 
\begin{equation}
-\frac{d}{dt}\left( \mathcal{E}\left( u\left( t\right) \right) -\mathcal{E}%
_{\infty }\right) ^{\xi }\geq C\left\Vert \partial _{t}u_\Gamma(t)
\right\Vert _{L^{2}\left( \Gamma \right) }.  \label{3.9bis}
\end{equation}%
Integrating over $( t_0,\infty) $ and using that $\mathcal{E}( u ( t ))
\rightarrow \mathcal{E}_{\infty }$ as $t \to \infty $, we infer that 
\begin{equation*}
\partial _{t}u_\Gamma \in L^{1}\left( t_0,\infty ;L^{2}\left( \Gamma \right)
\right).
\end{equation*}
\emph{Step 3.} Since $\partial_t u_\Gamma$ is uniformly continuous with
values in $L^2(\Gamma)$, it follows that $\|\partial_t
u_\Gamma(t)\|_{L^2(\Gamma)} \to 0$ as $t\to \infty$. Moreover, since by
Lemma \ref{omega} (iii) there are $t_k \nearrow \infty$ and $u_{\ast }\in
\omega \left( u_{0}\right) $ such that $u(t_{k})\rightarrow u_{\ast }$ in $%
\mathcal{X}_{\delta }$ as $k \to \infty $, the integrability of $\partial_t
u_\Gamma$ implies that $u_{\Gamma } ( t) \rightarrow u_*$ in $L^{2}( \Gamma)$
as $t \to \infty,$ and then in $\mathcal{X}_{\delta }$ as well. Hence $%
\omega ( u_{0}) =\{u_{\ast }\}$, and (\ref{5.7}) follows.
\end{proof}

\begin{remark}
One can also exploit (\ref{LS}) and (\ref{3.9bis}) to deduce a convergence
rate estimate in (\ref{5.7})\ of the form%
\begin{equation*}
\left\Vert u( t;u_{0}) -u_{\ast }\right\Vert _{L^{2}\left( \Gamma \right)
}\leq C(1+t)^{-1/(1-2\xi )},\qquad t >0,
\end{equation*}%
for some constants $C>0$, $\xi \in \left( 0,\frac{1}{2}\right) $ depending
on $u_{\ast }$. Taking advantage of the above (lower-order) convergence
estimate and the results of the previous subsections, one can also prove the
corresponding estimate in higher-order norms $W^{k,2}\left( \Omega \right) $%
, arguing, for instance, as in \cite{SW, Wu}.
\end{remark}

\end{document}